\newif\iffinal
\newif\ifarxiv
\definecolor{Red}{rgb}{1,0,0}
\definecolor{Blue}{rgb}{0,0,1}
\definecolor{Olive}{rgb}{0.41,0.55,0.13}
\definecolor{Yarok}{rgb}{0,0.5,0}
\definecolor{Green}{rgb}{0,1,0}
\definecolor{MGreen}{rgb}{0,0.8,0}
\definecolor{DGreen}{rgb}{0,0.55,0}
\definecolor{Yellow}{rgb}{1,1,0}
\definecolor{Cyan}{rgb}{0,1,1}
\definecolor{Magenta}{rgb}{1,0,1}
\definecolor{Orange}{rgb}{1,.5,0}
\definecolor{Violet}{rgb}{.5,0,.5}
\definecolor{Purple}{rgb}{.75,0,.25}
\definecolor{Brown}{rgb}{.75,.5,.25}
\definecolor{Grey}{rgb}{.5,.5,.5}
\def\red{\color{Red}}
\newcommand{\dist}{\operatorname{dist}}
\newcommand{\dens}{\operatorname{dens}}
\newcommand{\supp}{\operatorname{supp}}
\newcommand{\esssup}{\operatorname{esssup}}
\newcommand{\var}{{\texttt{var}}}
\newcommand{\MM}{\mathcal M}
\newcommand{\N}{\mathbb N}
\newcommand{\BbbP}{\mathbb P}
\newcommand{\BbbE}{\mathbb E}
\newcommand{\R}{\mathbb R}
\newcommand{\Z}{\mathbb Z}
\newcommand{\tH}{\tilde H}
\newcommand{\connom}{\includegraphics[width=0.1in]{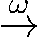}}
\newcommand{\connomp}{\includegraphics[width=0.1in]{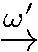}}
\newcommand{\stat}{H}
\newcommand{\ignore}[1]{}
\newtheoremstyle{thm}{1.5ex}{1.5ex}{\itshape\rmfamily}{} {\bfseries\rmfamily}{}{2ex}{}
\newtheoremstyle{def}{1.5ex}{1.5ex}{\slshape\rmfamily}{} {\bfseries\rmfamily}{}{2ex}{}
\newtheoremstyle{rem}{1.3ex}{1.3ex}{\rmfamily}{} {\itshape}
{} {1.5ex}{}
\theoremstyle{thm}
\newtheorem{theorem}{Theorem}[section]
\newtheorem{lemma}[theorem]{Lemma}
\newtheorem{claim}[theorem]{Claim}
\newtheorem{fact}[theorem]{Fact}
\newtheorem{proposition}[theorem]{Proposition}
\newtheorem*{Main Theorem}{Main Theorem.}
\newtheorem{corollary}[theorem]{Corollary}
\newtheorem*{special theorem}{Lindeberg-Feller Theorem for Martingales}
\newtheorem{example}[theorem]{Example}
\theoremstyle{def}
\newtheorem{definition}{Definition}
\newtheorem{assumption}{Assumption}
\theoremstyle{rem}
\newtheorem{remark}{{\itshape Remark}}[]
\newtheorem{iremark}{{\bf Remark}}[]
\numberwithin{equation}{section}
\renewcommand{\section}{\secdef\sct\sect}
\newcommand{\sct}[2][default]{%
\refstepcounter{section} \addcontentsline{toc}{section}{{\tocsection
{}{\thesection}{\!\!\!\!#1\dotfill}}{}} \vspace{0.7cm}
\centerline{\scshape\thesection.\ #1} \nopagebreak \vspace{0.2cm}}
\newcommand{\sect}[1]{%
\vspace{0.4cm} \centerline{\large\scshape\rmfamily #1}
\vspace{0.2cm}}
\renewcommand{\ignore}[1]{{}}
\def\myffrac#1#2 in #3{\raise 2.6pt\hbox{$#3 #1$}\mkern-1.5mu\raise 0.8pt\hbox{$#3/$}\mkern-1.1mu\lower 1.5pt\hbox{$#3 #2$}}
\title[Balanced RWRE]
{\fontsize{14}{20}\selectfont A quenched invariance principle for non-elliptic random walk in i.i.d. balanced random environment}
\author[N.~Berger and J.-D.~Deuschel]
{Noam Berger (Hebrew university of Jerusalem) \\ and Jean-Dominique Deuschel (TU Berlin)}
\begin{document}
\keywords{Random walks in random environments, non-ellipticity,
Maximum principle,Mean value inequality, Percolation}
\thanks{\hglue-4.5mm\fontsize{9.6}{9.6}\selectfont 
{\tiny{ Mathematics Subject Classification: 60K37}}\\
The research was partially supported by grant 2006477 of the Israel-U.S. binational science foundation, by grant 152/2007 of the German Israeli foundation and by ERC StG grant 239990.}

\begin{abstract}
We consider a random walk on $\Bbb Z^d$ in an i.i.d. balanced random environment, that is
a random walk for which the probability to jump from $x\in\Bbb Z^d$ to nearest neighbor $x+e$ is the same as
to nearest neighbor $x-e$. Assuming that the environment is genuinely $d$-dimensional and balanced
we show a quenched invariance principle: for $P$ almost every environment, the diffusively rescaled
random walk  converges to a Brownian motion with deterministic non-degenerate diffusion matrix.
Within the i.i.d. setting, our result extend both Lawler's  uniformly elliptic result \cite{Lawler82}
and Guo and Zeitouni's elliptic result \cite{GZ} to the general (non elliptic) case. 
\ignore{
In the general ergodic case we provide a mixing example in which no invariance principle (not even a degenerate one) holds.}
Our proof is based on analytic methods and percolation arguments.
\end{abstract}

\maketitle

\newcommand{\joint}{\Large \texttt{P}}
\newcommand{\quenchedP}{P}
\newcommand{\annealedP}{\BbbP}
\newcommand{\quenchedE}{E}
\newcommand{\annealedE}{\BbbE}
\newcommand{\st}{\mbox{ s.t. }}

\section{Introduction}

This paper deals with a Random Walk in Random Environment (RWRE) on $\Z^d$  which is defined as follows: Let
$\MM^d$ denote the space of all probability measures on the nearest neighbors of the origin $\{\pm e_i\}_{i=1}^d$ and let $\Omega=\left(\MM^d\right)^{\Z^d}$. An {\em environment} is a  point $\omega\in\Omega$, we denote by
$P$ the distribution of the environment on $\Omega$. For the purposes of this paper, we assume that $P$ is an i.i.d. measure, i.e.
\[
P=\nu^{\Z^d}
\]
for some distribution $\nu$ on $\MM^d$.
For a given environment $\omega\in\Omega$, the {\em Random Walk} on $\omega$ is a time-homogenous
Markov chain jumping to the nearest neighbors with transition kernel
\[
\quenchedP_\omega\left(\left.X_{n+1}=z+e\right|X_n=z\right)=\omega(z,e)\ge 0, \ \ \ \ \ \ 
\sum_e\omega(z,e)=1.
\]
The {\em quenched law} $\quenchedP_\omega^z$ is defined to be the law on $\left(\Z^d\right)^\N$ induced by
the kernel
$\quenchedP_\omega$ and $\quenchedP_\omega^z(X_0=z)=1$.  We let $\joint^z=P\otimes \quenchedP_\omega^z$ be the joint law of the environment and the walk, and the {\em annealed} law is defined to be its marginal
\[
\annealedP^z=\int_{\Omega}P_\omega^zdP(\omega).
\]


A comprehensive account of the results and the remaining challenges in the understanding of RWRE can be found in Zeitouni's Saint Flour lecture notes \cite{ofernotes}.

We are interested in the long-time asymptotic behavior of the walk. More precisely
considering the continuous rescaled trajectory $X^N\in C\big(\R^+,\R^d\big)$,
$$X^N_t=\frac{1}{\sqrt{N}}X_{[tN]}+\frac{tN-[tN]}{\sqrt{N}}\big(X_{[tN]+1}-X_{[tN]}\big),\quad t\ge 0,$$
we want to know whether  the {\bf quenched invariance principle holds}, that is,  if for $P$ a.a. $\omega$,
the law of $\{X^N_t\}_{t\ge 0}$ under $P^0_\omega$
converges weakly on $C\big(\R^+;\R^d)$ (endowed with the topology of uniform convergence on every compact interval) to a Brownian motion with  deterministic covariance  matrix.

The invariance principle is a well known classical result for the simple random walk (SRW), cf. \cite{donsker}. 

A satisfying understanding of invariance principles exists for the random conductance model,
which is a reversible RWRE, cf. 
\cite{MFGW}, \cite{KV}, \cite{SS}, \cite{BB}, \cite{MP}, \cite{BD}
and many others. 

However in general non-reversible random environments
this question is still   widely open.  
Significant progress has been made in the perturbative regime,  cf. \cite{BK}, \cite{BZ}, \cite{SZ}, in the ballistic regime
cf \cite{BS}, \cite{S2000}, \cite{RAS}, \cite{BeZe} and others,
and in the Dirichlet regime
cf \cite{sabot} and others.

By looking at the references above, one can see that the problem of proving an invariance principle is much harder when uniform ellipticity (i.e. that the transition probability between nearest neighbors are bounded away from zero) does not hold. Indeed, in the ballistic regime all the results are proven with the assumption of uniform ellipticity, the perturbative regime is by definition uniformly elliptic and in the reversible regime it had been an open challenge to transfer the uniformly elliptic results of \cite{SS} to less elliptic regimes.

\ignore{\red say what's elliptic. Mention that the perturbative is open in 2d}

In this paper we will focus on a special class of environments: the  balanced environment.
In particular, we solve the challenge of adapting the methods that were developed for the elliptic case in \cite{Lawler82} and \cite{GZ} to non-elliptic cases.

\begin{definition}\label{def:balanced}
An environment $\omega$ is said to be {\em balanced} if for every $z\in\Z^d$ and neighbor $e$ of the origin, $\omega(z,e)=\omega(z,-e)$.
\end{definition}

Of course we want to make sure that the walk really spans $\Z^d$:  
\begin{definition}\label{def:genddim}
An environment $\omega$ is said to be {\em genuinely $d$-dimensional} if for every neighbor $e$ of the origin, there exists $z\in\Z^d$ such that  $\omega(z,e)>0$.
\end{definition}

Throughout this paper we make the following assumption.
\begin{assumption}\label{ass:main}
$P$-almost surely, $\omega$ is balanced and genuinely $d$-dimensional.
\end{assumption}

Note that whenever the distribution is ergodic, the above assumption is equivalent with
$$P\big[\omega(z,e)=\omega(z,-e)\big]=1,\qquad\text{and}\qquad  P\big[\omega(z,e)>0\big]>0$$
for every $z\in\Z^d$ and a neighbor $e$ of the origin.

Note that unlike \cite{GZ} we do not allow holding times in our model. We do this for the sake of simplicity. Holding times in our case could be handled exactly as they are handled in \cite{GZ}.

Our main result states
\begin{theorem}\label{thm:main} Assume that the environment is i.i.d., balanced and genuinely $d$-dimensional, then 
 the  quenched invariance principle holds  with a deterministic non-degenerate diagonal covariance matrix.
\end{theorem}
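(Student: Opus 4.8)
\medskip\noindent\textbf{Proof strategy.}
The crucial elementary observation is that a balanced walk is a martingale: under $P^0_\omega$ we have $E[X_{n+1}-X_n\mid\FF_n]=\sum_e\omega(X_n,e)e=0$, and since every increment is a coordinate vector $\pm e_i$, the predictable quadratic variation is the diagonal matrix $\langle X\rangle_n=\sum_{k<n}D(\bar\omega_k)$ with $D(\omega)=\operatorname{diag}(2\omega(0,e_1),\dots,2\omega(0,e_d))$ and $\bar\omega_k=\tau_{X_k}\omega$ the environment viewed from the particle. By the martingale functional central limit theorem quoted above (the Lindeberg condition is trivial, since increments are bounded by $1$), Theorem~\ref{thm:main} will follow once we show that, for $P$-a.e.\ $\omega$, $\frac1N\langle X\rangle_{[Nt]}\to t\Sigma$ in $P^0_\omega$-probability for a deterministic, non-degenerate, diagonal $\Sigma$; tightness and the identification of the limit as Brownian motion are then built into the same theorem. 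So the whole problem reduces to a law of large numbers for the additive functional $\sum_{k<n}D(\bar\omega_k)$ of the Markov chain $\{\bar\omega_k\}$ on $\Omega$.

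Following Kozlov and Papanicolaou--Varadhan, and Lawler in the balanced case, the plan is to construct a probability measure $Q$ on $\Omega$ that is invariant for $\{\bar\omega_k\}$ and absolutely continuous with respect to $P$, to prove that $\{\bar\omega_k\}$ is ergodic under $Q$, and then to apply Birkhoff's theorem to get $\frac1n\sum_{k<n}D(\bar\omega_k)\to E_Q[D]=:\Sigma$ for $Q$-a.e.\ starting environment, transferring this finally to $P$-a.e.\ $\omega$. I would build $Q$ by a torus approximation: on $(\Z/n\Z)^d$ the periodized balanced walk has an invariant density $\varphi_n$, normalized to mean $1$, and $Q$ is a weak subsequential limit of the laws of $\tau_{Z_n}\omega$ with $Z_n$ distributed proportionally to $\varphi_n$. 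For the limit to have a density $dQ/dP\in L^p(P)$ with some $p>1$, one needs a uniform bound of the form $\limsup_n n^{-d}\sum_z\varphi_n(z)^q<\infty$; this is precisely where the analytic input enters — a discrete Aleksandrov--Bakelman--Pucci / maximum principle and a De Giorgi--Nash--Moser type mean value inequality for the non-divergence-form discrete operator $L_\omega u(z)=\sum_{i=1}^d\omega(z,e_i)(u(z+e_i)+u(z-e_i)-2u(z))$, applied to $\varphi_n$ as a solution of the adjoint equation.

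The main difficulty, and the point where the argument must go beyond \cite{Lawler82} and \cite{GZ}, is that without (uniform) ellipticity some coefficients $\omega(z,e_i)$ vanish: $L_\omega$ degenerates, the chain $\{\bar\omega_k\}$ need not be irreducible, traps may form, and the ABP and mean value estimates are not available on all of $\Z^d$. I would deal with this by a percolation/renormalization argument: declare a site — or a mesoscopic box — good when the environment in a neighborhood of it is elliptic enough to run the local ABP and Harnack estimates, and prove, using genuine $d$-dimensionality, the i.i.d.\ structure and coarse-graining, that the good sites percolate, that the walk cannot be confined by the bad set (it reaches the good region within a controlled number of steps and displacement), and that it visits the good set with positive asymptotic frequency. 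This provides the bound on $\varphi_n$ over the good set and the connectivity needed to run the comparison arguments, hence the existence of $Q$ with $Q\ll P$ and $dQ/dP\in L^p(P)$.

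Ergodicity of $\{\bar\omega_k\}$ under $Q$ I would establish in the standard way: a bounded function of the environment invariant under the dynamics yields a bounded $P^0_\omega$-martingale $h(\bar\omega_n)$, and coupling two copies of the walk in the same environment — which meet on the good set with positive probability by the percolation estimates — forces $h$ to be invariant under every coordinate shift, hence $P$-a.s.\ constant because $P$ is i.i.d. The passage from ``$Q$-a.e.''\ to ``$P$-a.e.''\ then uses that the set of environments on which the quenched CLT holds is shift-invariant, therefore $P$-trivial, and of positive $Q$-measure, hence of full $P$-measure since $Q\ll P$. Finally $\Sigma=\operatorname{diag}(2E_Q[\omega(0,e_i)])_{i=1}^d$ is diagonal by construction and non-degenerate, because genuine $d$-dimensionality together with the percolation estimates forces $E_Q[\omega(0,e_i)]>0$ for each $i$ (the walk keeps moving in every axis direction with positive frequency). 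Combining these steps with the martingale functional CLT completes the proof.
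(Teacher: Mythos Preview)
Your high-level architecture (martingale + invariant measure from the particle's point of view + ergodic theorem) is correct and matches the paper, but the analytic engine you propose would not run in the genuinely non-elliptic setting, and your percolation picture is not the one that works.

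The central gap is your plan to apply an ABP/maximum principle and mean value inequality directly to $L_\omega u(z)=\sum_i\omega(z,e_i)(u(z+e_i)+u(z-e_i)-2u(z))$, after restricting to ``good'' sites where the environment is ``elliptic enough''. Look at Example~\ref{exam:Erwin's walk}: at \emph{every} site exactly one of the $\omega(z,e_i)$ is nonzero, so there are no good sites or boxes in your sense, and the Kuo--Trudinger/Guo--Zeitouni bound $\lambda(I_h(z))\le C|L_\omega h(z)|^d/\prod_i\omega(z,e_i)$ blows up everywhere. The paper's key idea, which you are missing, is to replace $L_\omega$ by the generator of the \emph{rescaled walk} $Y_n=X_{T_n}$, where $T_1$ is the first time all $d$ coordinates have moved. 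This rescaled walk is still balanced (Remark~\ref{rem:balancedmart}) but, by construction, has nontrivial displacement in every coordinate direction; Lemma~\ref{lem:ubndi} then gives the needed volume bound on the angle of vision via an optional-stopping argument instead of a pointwise ellipticity bound. The $L^p$ control of the periodic invariant density (Lemma~\ref{lem:lprn}) is obtained for the \emph{rescaled} walk by a bootstrap that simultaneously controls the average step size $O_N$ and the density, and only afterwards transferred to a stationary measure for the original walk (Subsection~\ref{sec:non-rescaled}).

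Two further points. First, your ergodicity argument (couple two walks and use shift-invariance) is not what the paper does; in the non-elliptic case the support of $Q$ is strictly smaller than $\Omega$ (Figure~\ref{fig:noteqviv}), coupling is delicate in dimensions $d\ge3$, and the paper instead proves uniqueness of the ergodic $Q$ by a Burton--Keane type argument on \emph{sinks}, using Corollary~\ref{cor:phi} (a density lower bound on $\supp Q_N$ coming from the $L^p$ estimate) as a substitute for finite energy. Second, your passage from $Q$-a.e.\ to $P$-a.e.\ via ``the CLT event is shift-invariant'' does not close: $Q\ll P$ need not imply $P\ll Q$, and the walk started at $0$ may begin outside $\supp_\omega Q$. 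The paper handles this with Proposition~\ref{prop:magia}, using the mean value inequality (Theorem~\ref{thm:mean_value}) to show that the harmonic function $h(z)=P_\omega^z(\text{never hit }\supp_\omega Q)$ must vanish identically.
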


The quenched invariance principle has been derived by Lawler in the 1980-s \cite{Lawler82} for balanced uniform elliptic environments, i.e.,  when there exists $\epsilon_0>0$ such that
$$P(\forall_{z\in\Z^d} \forall_{i=1,...,d}, \ \omega(z,e_i)>\epsilon_0)=1.$$
In fact, Lawler proved this result for general ergodic, uniformly elliptic, balanced environments.

Recently Guo and Zeitouni improved this result in \cite{GZ} for i.i.d  elliptic environments, where
$$P(\forall_{z\in\Z^d} \forall_{i=1,...,d}, \ \omega(z,e_i)>0)=1.$$
Note that our genuinely $d$-dimensional  assumption is much weaker than ellipticity, in particular it applies to the following example

\begin{example}\label{exam:Erwin's walk}
Take $P=\nu^{\Z^d}$ as above with
$$\nu\left[\omega(z,e_i)=\omega(z,-e_i)=\frac12, 
\omega(z,e_j)=\omega(z,-e_j)=0, \forall j\ne i\right]=\frac{1}{d}, \,\, i=1,...,d.$$
In this model, the environment chooses at random one of the $\pm e_i$ direction, see Figure \ref{fig:erwex}).
\end{example}

\begin{figure}[h]
\begin{center}
\includegraphics[width=2in]{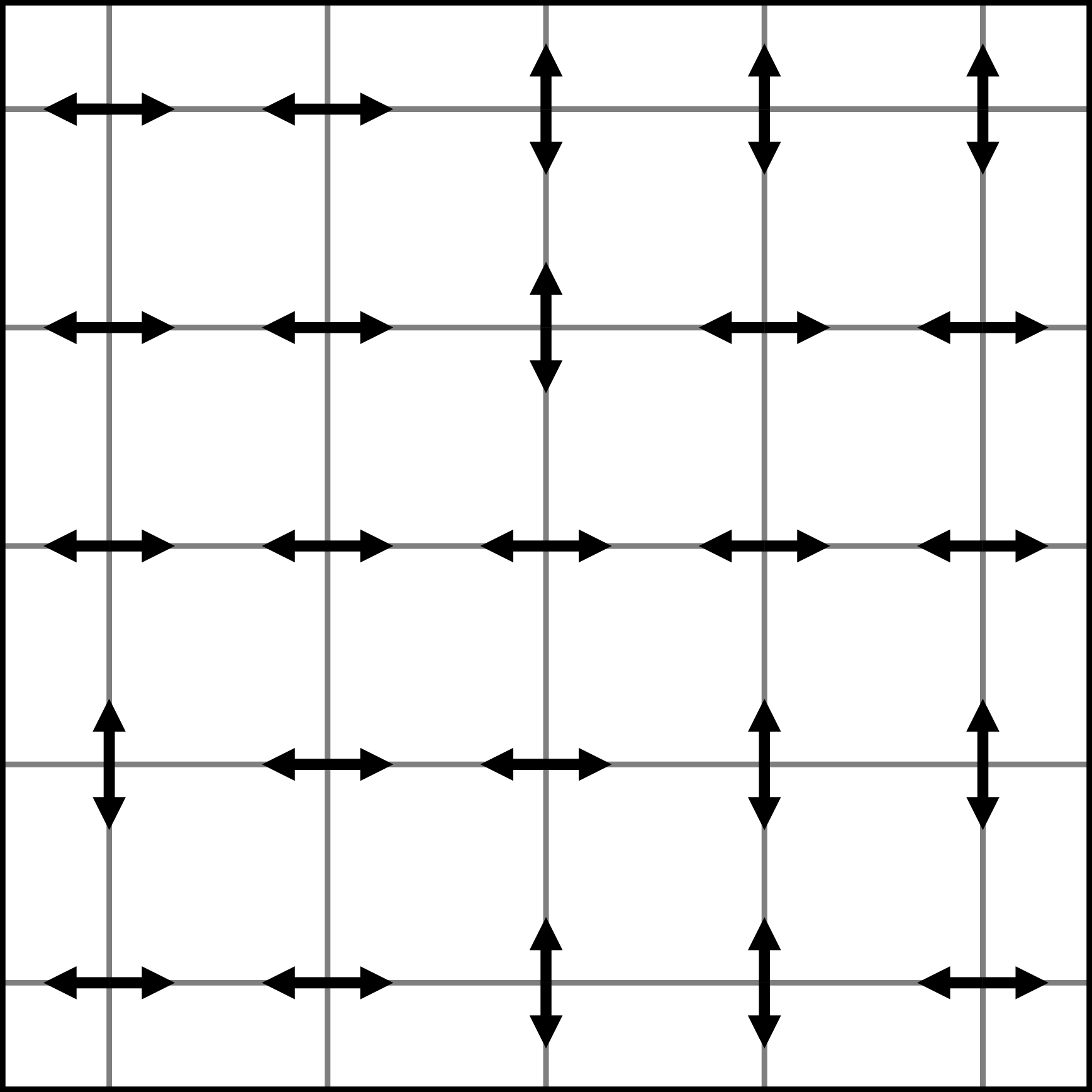}
\caption{\sl An illustration of Example \ref{exam:Erwin's walk} restricted to a small box.
}
\label{fig:erwex}
\end{center}
\end{figure}

\cite{GZ} also shows the quenched invariance principle for ergodic elliptic  environments under the moment condition
$$ E[\big(\prod_{i=1}^d\omega(x,e_i)\big)^{-p/d}]<\infty\quad\text{for some }\quad p>d.$$
Unlike the uniform elliptic case, one can find examples  of ergodic elliptic balanced environment, where the invariance principle fails, as in the following two-dimensional example.

\begin{example}\label{exam:noclt}
For every point $z\in\Z^2$ we have Bernoulli variables $X^{z,ver}_n,
n=1,2,\ldots$ and $X^{z,hor}_n, n=1,2,\ldots$. Those variables are all
independent, and $P(X^{z,ver}_n=1)=P(X^{z,hor}_n=1)=3^{-n}$. Then, for
every $z\in\Z^2$, if $X^{z,ver}_n=1$, then the $2^n$ vertices directly
above $z$ all get chance $1-e^{-2n}$ to move in the vertical direction
and chance $e^{-2n}$ to move in the horizontal direction.
If $X^{z,hor}_n=1$, then the $2^n$ vertices directly to the right of
$z$ all get chance $1-e^{-2n-1}$ to move in the horizontal direction
and chance $e^{-2n-1}$ to move in the vertical direction. If a point
hasn't been spoken for, it gets probability $\frac 14$ to go in each
direction.
If a point has been spoken for more than once, it gets the highest
value assigned to it.
\end{example}
It is not hard to prove that in Example \ref{exam:noclt}, there exist $\alpha$ and $\beta$ positive such that for every large enough $T$, with probability at least $\alpha$, all movements in the time interval $[(1-\beta)T,T]$ are in vertical directions, and with probability at least $\alpha$, all movements in the time interval $[(1-\beta)T,T]$ are in horizontal directions. 
Furthermore, a.s. one can find infinitely many values of $T$ such that all  movements in the time interval $[(1-\beta)T,T]$ are in vertical directions and infinitely many values of $T$ such that all  movements in the time interval $[(1-\beta)T,T]$ are in horizontal directions.
Obviously, such process cannot converge to a Brownian Motion, not even a degenerate one. It is also easy to show that the random walk in Example \ref{exam:noclt} is transient, even though it is two-dimensional.

The balanced assumption is essential for our proof and simplifies the argument greatly. In particular it implies that the walk is a martingale, which enables us to use the vast theory of martingales.

In particular, unlike the case of random conductances, we do not have to define and control a corrector. On the other hand, the existence and properties of an invariant measure for the process w.r.t. the point of view of the particle is a serious difficulty in our case, while it is simple in the case of random conductances.

We now define the process of the point of view of the particle, a notion which is standard in the literature of random walk in random environment, and is used in this paper.
The environment viewed from the point of view of the particle
is the Markov chain $\{\bar\omega_n\}_{n\in\mathbb N}$  given by  
$$\bar\omega_n=\tau_{-X_n}\omega,$$
where $\tau$ is the shift on $\Omega$.

We can also view it as the Markov on $\Omega$ whose generator is

\begin{equation}\label{eq:genpart}
Lf(\omega)=\sum_{e:\|e\|=1}\omega(0,e)\big[f(\tau_{-e}\omega)-f(\omega)\big].
\end{equation}

\ignore{

We now give a short overview of the proof strategy and the main steps of the proof.
The basic idea in our work is the same basic idea as in the work of Lawler \cite{Lawler82} and of Guo and Zeitouni \cite{GZ} -- we use analytical methods to show the existence of a measure which is invariant w.r.t. the random walk and is absolutely continuous w.r.t. to the original measure $P$. However, we have to deal with \ignore{some serious}{several} difficulties that stem from the non-ellipticity. We now give a few more details.

Since the random walk is a martingale, following
the classical argument of Kipnis-Varadhan and Kozlov (cf. \cite{KV}, \cite{Ko}), the quenched invariance principle is established, once we show that 
 the environment viewed from the point of view of the walker has an invariant measure $Q$ {\em which is absolutely continuous} with respect to the original measure $P$ and {\em ergodic}, and such that for $P$-a.e. $\omega$, the random walk from the point of view of the particle arrives a.s. to the support of $Q$.
The environment viewed from the point of view of the walker
is the Markov chain $\{\bar\omega_n\}_{n\in\mathbb N}$  given by  
$$\bar\omega_n=\tau_{-X_n}\omega,$$
where $\tau$ is the shift on $\Omega$.

We can also view it as the Markov on $\Omega$ whose generator is

\begin{equation}\label{eq:genpart}
Mf(\omega)=\sum_{e:\|e\|=1}\omega(0,e)\big[f(\tau_{-e}\omega)-f(\omega)\big].
\end{equation}

Since $\Omega$ is compact, it is clear that the process $(\bar\omega_n)_{n\in\N}$ has an invariant measure $Q$. The difficulty is to first show that there exists $Q$ which is absolutely continuous with respect to $P$,
and second to check the ergodicity of this $Q$.

In order to verify the absolute continuity, we adapt the argument of \cite{GZ} to our non-elliptic situation. This argument is based on analytical tools first introduced in the
continuous setting by Alexandrov-Bakelman-Pucci and translated into the discrete situation by \cite{Lawler82}, \cite{KT} and \cite{GZ}. A key role is played by a maximal inequality
of the Alexandrov-Bakelman-Pucci type, 
which roughly speaking allows to control the maximum of the some function in terms of the $L^d$ norm of a Laplacian (here the generator of the random walk).
Clearly such an estimate requires some kind of ellipticity, which is lacking for our 
random
walk. Our main tool is to introduce  
a rescaled random walk,  obtained from the original walk  stopped  after each coordinate has been upgraded, and we get the maximum inequality using
the optional stopping theorem for martingales (in this step again the balanced condition is crucial). 
The maximum inequality allows us then  to control for  some $p>1$  the $L^p$- norm of the density of the invariant measure $Q_N$  of the
walk on the 
cube of size $N$, uniformly in large  $N$. From this, following the by now standard argument appearing in \cite{Lawler82} and \cite{GZ}, we obtain the existence of an invariant measure $Q\ll P$.

Once existence of $Q\ll P$ is established, we still have to verify the ergodicity of $Q$ for the chain $\{\bar\omega_n\}_{n\in\N}$. Unlike the elliptic case where $Q\ll P$ implies $P\ll Q$,
in our non-elliptic walk we usually have that the support of the measure $Q$, $\text{supp}\ Q$, is strictly smaller than $\text{supp}\ P$  (see figure \ref{fig:noteqviv} on Page \pageref{fig:noteqviv}).
\ignore{\red At this point the figure 4 is not very useful since one does not understand why the Q measure is 0 cf. alos 2nd referee, before explaining the sink (below) I would therefore drop the reference to figure 4 here.}

For given $\omega\in \Omega$ let $\supp_\omega Q=\{z\in\Z^d: \tau_{-z}(\omega)\in\text{supp}\  Q\}$. Note that once  $X_m\in \supp_\omega Q$ for some $m\in\N$ then
$X_{n}\in\supp_\omega Q$ for all $n\ge m$.
For a given configuration $\omega\in\Omega$, two points $x,y\in\Z^d$ are  called  strongly
$\omega$-connected if we can find an oriented open path from $x$ to $y$ and an oriented path from $y$ to $x$. 
A subset $A$ of $\Z^d$ is called a {\em sink} if every $x,y$ in $A$ are strongly connected, but there is no oriented path leaving $A$, see Figure \ref{fig:sink}.

\begin{figure}[t]
\begin{center}
\includegraphics[width=4.36in]{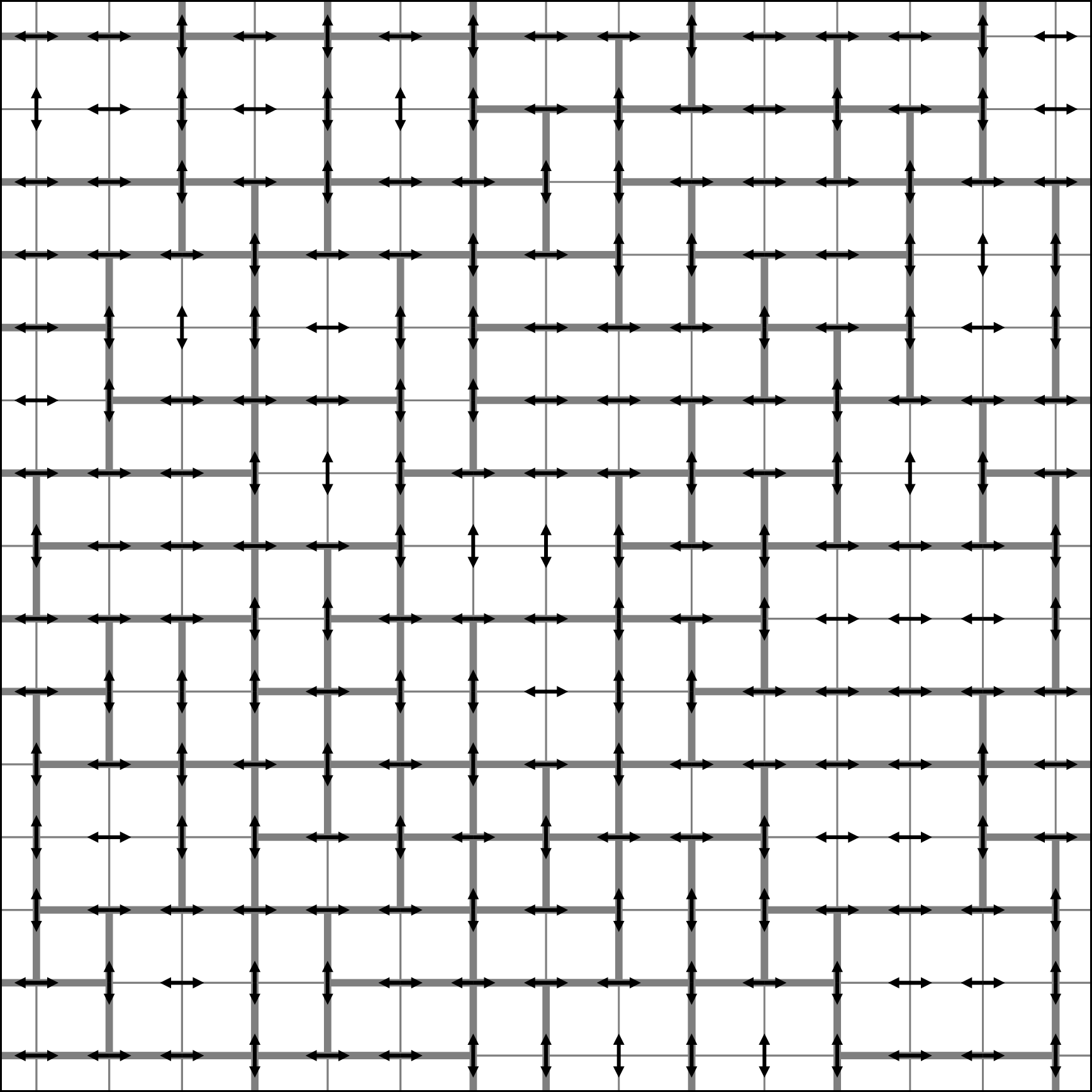}
\caption{\sl 
The gray part in this picture is the sink, and in two dimensions it is quite easy to verify that it is unique and that from every point the walk hits the sink w.p. 1. This result is harder in higher dimensions. It is proved in Section \ref{sec:high_d}.
}
\label{fig:sink}
\end{center}
\end{figure}

It turns out that
the ergodicity of $Q$ is related to the question of the existence and uniqueness of a sink. \ignore{\red This is not true - it is the uniqueness of the sink.}
In the construction of the invariant measure $Q$, we have introduced the reflected-periodized walk in the cube of size $N$. The reason for looking at the reflected-periodic walk instead of the periodic walk as in \cite{Lawler82} and \cite{GZ},
 is to ensure that the support of the invariant measure $Q_N$  and $Q$ are compatible, see Figure \ref{fig:reflect} on Page \pageref{fig:reflect}.
Now our  $L^p$-norm bound on the density of the invariant measure $Q_N$ on the finite cube, yields a lower bound on the size of the support of $Q_N$, and $Q$.
We can then adapt the Burton-Keane argument for uniqueness of a percolation cluster \cite{BuKe} to our setting, even though we only have a weak version of the finite energy condition.
We compensate for the weaker finite energy condition by using density bounds on the support of the invariant measure.

In \cite{HS} percolation problems in the same spirit are dealt with. However, since that paper deals only with the two-dimensional case, we cannot apply their results.

Finally we have to make sure that walk starting at $0$ will reach the set  $\supp_\omega Q$ in finite time. 
Here again the argument is based on an  analytical tool: the mean-value inequality, which in the spirit of \cite{GZ} can also be used in the proof of the transience
of the walk in dimensions $d\ge 3$. We do not give the details of the transience proof in this paper.

}

The paper is organized as follows: in section \ref{sec:rescwalk} we introduce the rescaled process and give some estimate on the corresponding stopping times.
Section \ref{sec:max_princ} deals with the maximum principle for the rescaled process, while section \ref{sec:perstat} presents the stationary measure for the periodized environment.
Then in Subsection \ref{sec:stnons} we repeat the arguments from \cite{Lawler82} and \cite{GZ} that lead to the existence of an invariant measure which is absolutely continuous w.r.t. $P$.
 In section \ref{sec:high_d} we 
 \ifarxiv \else
 finally 
 \fi
 prove Theorem \ref{thm:main}.
\ifarxiv
Finally, in Section \ref{sec:mean_value_proof} we we give a proof of the mean value inequality which is 
stated in Section \ref{sec:max_princ} and
used in Section \ref{sec:high_d}.
\else
\fi


\section{The rescaled walk}\label{sec:rescwalk}
In this section we define the rescaled walk, which is a useful notion in the study of non-elliptic balanced RWRE, and prove some basic facts about it.

Let $\{X_n\}_{n=0}^\infty$ be a nearest neighbor walk in $\Z^d$, i.e. a sequence in $\Z^d$ such that $\|X_{n+1}-X_n\|_1=1$ for every $n$. Let $\alpha_n,n\geq 1$ be the coordinate that changes between $X_{n-1}$ and $X_{n}$, i.e. $\alpha(n)=i$ whenever $X_{n}-X_{n-1}=e_i$ or $X_{n}-X_{n-1}=-e_i$.

\begin{definition}\label{def:scrw}
The stopping times $T_k,k\geq 0$ are defined as follows: $T_0=0$. Then
\[
T_{k+1}=\min\left\{
t>T_k:\{\alpha(T_k+1),\ldots,\alpha(t)\}=\{1,\ldots,d\}
\right\}\leq\infty.
\]
We then define the {\em rescaled random walk} to be the sequence (no longer a nearest neighbor walk)
$Y_n=X_{T_n}$. $\{Y_n\}$ is defined as long as $T_n$ is finite.
\end{definition}

\begin{lemma}\label{lem:well_defined}
$\annealedP$-almost surely, $T_k<\infty$ for every $k$.
\end{lemma}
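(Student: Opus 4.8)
The plan is to reduce, via the strong Markov property of the environment seen from the particle, to a statement about a single coordinate, and then to combine a martingale argument (available because $\omega$ is balanced) with a percolation argument (available because $P$ is i.i.d.). For the reduction: the process $\bar\omega_n=\tau_{-X_n}\omega$ is a Markov chain on $\Omega$, the increments $X_{n+1}-X_n$ are measurable functions of $(\bar\omega_m)_{m\le n+1}$, and each $T_k$ is a stopping time for this chain. Being balanced and being genuinely $d$-dimensional are shift-invariant properties, so by Assumption \ref{ass:main} they hold simultaneously on a shift-invariant set of full $P$-measure. Since $T_1<\infty$ exactly when for every $i$ there is some $n\ge1$ with $\alpha(n)=i$, it suffices to prove: \emph{for each $i$ and $P$-a.e.\ $\omega$, one has $\alpha(n)=i$ for some $n$, $\quenchedP^0_\omega$-almost surely.} Indeed, intersecting over the $d$ values of $i$ and over the countably many starting points, and using the shift-invariance of $P$, one obtains a shift-invariant set $\mathcal N$ of full $P$-measure consisting of environments from which, wherever the walk starts, it a.s.\ uses all $d$ axes in finite time. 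For $\omega\in\mathcal N$ one has $\bar\omega_{T_k}\in\mathcal N$ on $\{T_k<\infty\}$, so the strong Markov property at $T_k$ gives $T_{k+1}<\infty$ a.s., and the lemma follows by induction on $k$.

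Now fix $i$. Because $\omega$ is balanced, $X_n$ is a martingale, so each coordinate $X^{(i)}_n$ is a martingale with increments in $\{-1,0,1\}$ and quadratic variation $\langle X^{(i)}\rangle_n=\sum_{m<n}2\,\omega(X_m,e_i)$, while $\sum_{j=1}^d\langle X^{(j)}\rangle_n=n$. By the conditional (L\'evy) form of the Borel--Cantelli lemma, on $\{\langle X^{(i)}\rangle_\infty=\infty\}$ one has $\alpha(n)=i$ infinitely often; summing over $j$ shows that a.s.\ $\langle X^{(j)}\rangle_\infty=\infty$ for at least one $j$, and since a bounded-increment martingale with infinite quadratic variation is unbounded, $X_n$ a.s.\ visits infinitely many distinct sites. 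Next, as $\omega$ is genuinely $d$-dimensional and i.i.d., $\nu[\omega(0,e_i)>0]>0$, so fix $\delta>0$ with $\nu[\omega(0,e_i)\ge\delta]>0$; then $\GG:=\{x\in\Z^d:\omega(x,e_i)\ge\delta\}$ is an i.i.d.\ site-percolation configuration of positive density. Whenever the walk sits at a point of $\GG$ the conditional probability that its next step has $\alpha=i$ is at least $2\delta$, so the conditional Borel--Cantelli lemma gives once more: \emph{if the walk visits $\GG$ infinitely often, then a.s.\ $\alpha(n)=i$ for some $n$.} Hence it remains only to show that the walk a.s.\ visits $\GG$ infinitely often; and since it already visits infinitely many distinct sites, otherwise it would, from some time on, be confined to a single \emph{infinite} connected component of $\Z^d\setminus\GG$.

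What is left --- and is the real content of the lemma --- is to prove that $\annealedP^0$-almost surely the walk is not eventually trapped inside an infinite component of $\Z^d\setminus\GG$; I expect this to be the main obstacle. I would establish it by a percolation argument in the spirit of (but considerably lighter than) the Burton--Keane-type argument used later for uniqueness of the sink: one uses crucially that $P$ is i.i.d., so that $\GG$ is genuinely ubiquitous and any infinite component of $\Z^d\setminus\GG$ has a macroscopic boundary with good finite-energy properties, together with the fact that $X_n$ is a non-degenerate martingale with $\annealedE^0[|X_n|^2]=n$, which cannot keep spreading while avoiding that boundary forever; in low dimensions one may instead simply invoke recurrence of balanced walks. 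Granting this, the preceding paragraph finishes the proof.
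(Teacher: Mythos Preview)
Your proof has a genuine gap, and you acknowledge it yourself: the claim that the walk is not eventually confined to an infinite component of $\Z^d\setminus\GG$ is left unproven, with only a vague appeal to a Burton--Keane-type argument. Since for small $\nu[\omega(0,e_i)\ge\delta]$ the set $\Z^d\setminus\GG$ is supercritical Bernoulli percolation and certainly has infinite components, you cannot hope to rule them out; you would really need a dynamical argument that the walk exits them, and your sketch does not supply one.

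The gap is self-inflicted. You have reduced the lemma to a \emph{quenched} statement (for $P$-a.e.\ $\omega$, the walk uses coordinate $i$) in order to run the strong Markov induction, and this forces you to analyze the fixed percolation set $\GG$. The paper avoids this entirely by working under the annealed measure throughout and redoing the one-step argument after each $T_k$ rather than invoking strong Markov for the environment chain. The two key observations are: (i) $W_n:=\sum_{j}X_n^{(j)}$ is a one-dimensional simple random walk (by balancedness), hence reaches infinitely many new values, hence $X_n$ visits infinitely many sites never visited before, even after time $T_k$; and (ii) at each such fresh site the environment is independent of the entire past under the i.i.d.\ measure $P$, so the \emph{annealed} conditional probability that the next step has $\alpha=i$ is at least $2E_\nu[\omega(0,e_i)]>0$, a uniform constant. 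Conditional Borel--Cantelli then gives $\alpha(n)=i$ for some $n>T_k$, annealed almost surely, and the induction closes. In short, once you exploit the i.i.d.\ structure at \emph{new} sites under $\annealedP$ rather than fixing $\omega$ first, there is no percolation problem to solve.
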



\begin{lemma}\label{lem:tail_est}
There exists a constant $C$ such that for every $n$,
\[
\annealedP(T_1>n)<e^{-Cn^{\frac 13}}.
\]
\end{lemma}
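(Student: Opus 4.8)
The plan is to show that $T_1 > n$ forces an unlikely deviation in the local environment that the walk has explored. The key point is that $T_1$ is the first time at which the walk has moved in all $d$ coordinate directions; so $\{T_1 > n\}$ means that, up to time $n$, there is some coordinate $i \in \{1,\dots,d\}$ that has never been used. It suffices to bound $\annealedP(\text{coordinate } i \text{ never used in steps } 1,\dots,n)$ for a fixed $i$ and then union-bound over the $d$ choices. The obstacle compared with the uniformly elliptic case is that there is no deterministic lower bound on $\omega(z,e_i)$, so one cannot simply say that each step independently has probability $\geq \epsilon_0$ of being in direction $i$; the genuine $d$-dimensionality assumption only guarantees the \emph{existence} of some site with $\omega(z,e_i) > 0$, and typical sites may have $\omega(z,e_i) = 0$ (as in Example \ref{exam:Erwin's walk}).

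First I would quantify genuine $d$-dimensionality: since $P = \nu^{\Z^d}$ and there exists $z$ with $\nu(\omega(z,e_i)>0) > 0$ — equivalently (by translation invariance of $\nu$ applied coordinatewise, or directly from Assumption \ref{ass:main}) there is a constant $\delta > 0$ and a probability $\rho > 0$ such that, independently at each site $x$, the event $G_i(x) = \{\omega(x,e_i) \geq \delta\}$ has $\nu(G_i(x)) \geq \rho$. (The stated tail $e^{-Cn^{1/3}}$, weaker than exponential, suggests that one should not optimize constants here; the cube-root exponent will come out of the combinatorics below, not from any delicate estimate on $\delta,\rho$.) Now consider the walk up to time $n$ and suppose coordinate $i$ is never used. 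The walk then lives on a $(d-1)$-dimensional hyperplane-like slab; in fact $X_k$ has a constant $i$-th coordinate, so it visits at most $n+1$ distinct sites, all lying in the hyperplane $\{x_i = 0\}$ (translated to the start). The idea is: if many of those visited sites $x$ lay in $G_i(x)$, then at each visit to such a site the walk has probability $\geq \delta$ of stepping in direction $\pm e_i$, which would end the excursion; since this did not happen, few visited sites can be "good."

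The main step is therefore a two-layer estimate. Condition on the environment $\omega$. Let $V$ be the (random) set of sites visited by $X$ during $[0,n]$ before coordinate $i$ is used; on the event in question $|V| \leq n+1$ and $V$ is connected in the hyperplane. For the walk to avoid direction $i$ for $n$ steps, whenever it sits at a site $x$ with $\omega(x,e_i) + \omega(x,-e_i) \geq 2\delta$ it must (on that step) choose one of the other $2(d-1)$ directions, an event of conditional probability $\leq 1 - 2\delta$. So if $N_{\mathrm{good}}$ denotes the number of steps made from such "good" sites, then $\quenchedP_\omega^0(T_1 > n \mid \omega) \leq (1-2\delta)^{N_{\mathrm{good}}}$ where, crucially, $N_{\mathrm{good}}$ here should be counted with multiplicity of visits — but even counting each good \emph{site} once, $\quenchedP_\omega^0(T_1>n\mid \omega, V) \leq (1-2\delta)^{\#\{x\in V:\, x\text{ good}\}}$ since the walk must leave $V$ only by a non-$i$ step from each good site at least on its first visit. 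Hence it remains to show that, $P$-a.s., every connected set $V$ of size $m$ in a hyperplane contains at least $c\, m^{1/3}$ good sites — or more precisely that $P(\exists \text{ connected } V, |V|=m, V \text{ has fewer than } c m^{1/3} \text{ good sites})$ is small. This is a percolation/combinatorial estimate: the number of connected sets of size $m$ in $\Z^{d-1}$ containing the origin is at most $C_0^m$ for some $C_0 = C_0(d)$, and for a fixed such set the number of good sites is a sum of $m$ i.i.d. Bernoulli$(\geq\rho)$ variables (independence because distinct sites), so by a Chernoff bound $P(\#\text{good} < \tfrac{\rho}{2} m) \leq e^{-c' m}$. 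Summing $C_0^m e^{-c' m}$ over $m \geq m_0$ is finite and small; the bad-site deficiency is in fact exponentially unlikely for large $m$, which is far stronger than $m^{1/3}$.

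To finish: on the complement of this bad event (which has probability $1 - o(1)$, and can be absorbed), for every environment the walk that avoids direction $i$ for $n$ steps visits $m \geq \sqrt n$ sites — wait, $m$ could be as small as order $\sqrt{n}$ if the walk backtracks heavily, but in fact $m$ can be as small as $1$ (the walk could oscillate between two neighboring sites). This is the real subtlety and the source of the $n^{1/3}$: one cannot guarantee $V$ is large. The fix is the standard one for balanced walks — if $V$ is small, say $|V| = m$, then the walk spends $n$ steps inside a set of $m$ sites, so some site $x^\star \in V$ is visited at least $n/m$ times, and each visit to $x^\star$ is an independent chance $\geq \delta$ (if $x^\star$ is good) of using direction $i$; if no site of $V$ is good we instead get a contradiction with the bad-event complement provided $m \geq m_0$ is not too small, and for $m < m_0$ we handle it directly since then $n/m > n/m_0$ visits to a single site give probability $\leq (1-\delta)^{n/m_0}$ provided that site is good — and the complement event forces a good site once $m \geq 1$ with good probability... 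The clean bookkeeping: set $m^\star = n^{1/3}$. If $|V| \geq m^\star$, the percolation estimate gives $\geq c |V| \geq c m^\star = c n^{1/3}$ good sites, hence $\quenchedP_\omega^0(T_1 > n\mid\omega) \leq (1-2\delta)^{cn^{1/3}} = e^{-C_1 n^{1/3}}$. If $|V| \leq m^\star$, then some site is visited $\geq n/m^\star = n^{2/3}$ times, and we need that site to be good; we do not control which sites of $V$ are good, but we can instead argue that the walk, confined to $\leq m^\star$ sites for $n^{2/3}\cdot$(something) steps, must traverse edges many times, and a balanced walk confined to a box of size $\ell$ exits it in expected time $O(\ell^2)$ — so confinement to $|V|$ sites (diameter $\leq m^\star$) for $n$ steps already has probability $\leq e^{-C n / (m^\star)^2} = e^{-C n^{1/3}}$ by a standard Azuma/exit-time argument for the martingale $X$. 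Either way we get the bound $e^{-C n^{1/3}}$ up to constants and the $P$-null or small-probability correction, and the union bound over $i = 1,\dots,d$ only changes the constant. The step I expect to be the main obstacle is exactly this dichotomy — controlling the case where the walk stays confined to a small set of sites, where neither ellipticity nor largeness of the range is available — and the exit-time/martingale estimate for balanced walks confined to a small region is the technical heart; the combinatorial "many good sites in a large connected set" estimate is comparatively routine.
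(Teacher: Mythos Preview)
Your overall dichotomy --- either the walk's range is large or the walk is confined to a small set --- is exactly the right idea, and your treatment of the confinement branch is correct: a balanced walk confined to a set of diameter at most $n^{1/3}$ for $n$ steps has probability at most $e^{-Cn^{1/3}}$, since $W_n:=\sum_j X_n^{(j)}$ is a one-dimensional simple random walk and the probability that it stays in an interval of length $O(n^{1/3})$ for $n$ steps is $\le e^{-Cn^{1/3}}$. This is precisely the estimate the paper uses for that branch.

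The large-range branch, however, has a genuine gap. You argue by a union bound over connected sets $V\subset\{x_i=0\}$ of size $m$: there are at most $C_0^m$ such lattice animals, and for a fixed one the number of good sites is $\mathrm{Bin}(m,\rho)$, so by Chernoff it has fewer than $\rho m/2$ good sites with probability at most $e^{-c'm}$. The trouble is that $c'$ scales with $\rho$, and $\rho=\nu(\omega(0,e_i)\ge\delta)$ can be arbitrarily small under Assumption~\ref{ass:main}; there is no reason for $c'>\log C_0$, so $\sum_m C_0^m e^{-c'm}$ need not converge and the union bound fails. (This is the usual obstruction to Peierls-type arguments at low density.)

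The paper sidesteps this entirely by working annealed and \emph{sequentially} rather than via a union bound over static sets. The observation is that whenever $W_n$ attains a value not seen before, $X_n$ must be at a site never visited before; since the environment is i.i.d., at such a fresh site the annealed probability (conditioned on the whole past) of the next step being in direction $\pm e_i$ equals $2E_\nu[\omega(0,e_i)]>0$, independently for each fresh site in order of discovery. Thus if $\max_{k\le n}|W_k|\ge n^{1/3}$ then at least $n^{1/3}$ fresh sites are visited, and the annealed probability that none of the corresponding steps uses coordinate $i$ is at most $(1-c)^{n^{1/3}}$. Combined with the SRW confinement bound $\annealedP(\max_{k\le n}|W_k|<n^{1/3})\le e^{-Cn^{1/3}}$, this gives the lemma in a few lines. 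Your percolation argument can in fact be repaired along exactly these lines: replace the union bound over deterministic $V$ by the sequential observation that the goodness indicators at sites in order of first visit are i.i.d.\ Bernoulli$(\rho)$ under the annealed law; but then the detour through ``good sites'' becomes unnecessary, since at each fresh site you already get a uniform annealed chance of using coordinate $i$.
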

Note that due to lack of stationarity, Lemma \ref{lem:tail_est} does not directly say anything about $T_{k+1}-T_k$ for large values of $k$. In Section \ref{sec:perstat} we will establish estimates for $T_{k+1}-T_k$ for large values of $k$.
\begin{proof}[proof of Lemma \ref{lem:tail_est}]
Note that $W_n=\sum_{i=1}^dX_n^{(i)}$ is a simple random walk, and whenever $W_n$ reaches a new value, $X_n$ visits a new point.
Since the environment is i.i.d., whenever the walk is at a new point, its (annealed) probability of going in any direction, conditioned on its past, is bounded away from zero.
Therefore,
\[
\annealedP\left( T_1>n \left|
\max_{k\leq n}|W_k|\geq n^{\frac 13}
\right.\right)
\leq e^{-Cn^{\frac 13}},
\]

and from standard SRW estimates,
\[
\annealedP\left(
\max_{k\leq n}|W_k|<n^{\frac 13}
\right)
\leq e^{-Cn^{\frac 13}}.
\]
Combined, we get the desired result.
\end{proof}

\begin{proof}[Proof of Lemma \ref{lem:well_defined}]
Assume that almost surely $T_k$ is finite, and we show that almost surely $T_{k+1}<\infty$. By the same argument as in the proof of Lemma \ref{lem:tail_est}, almost surely after time $T_k$ the walk $\{X_n\}$ will visit infinitely many new points. For every coordinate $i$, each time the walk visits a new point, conditioned on the past it has an annealed probability bounded away from zero to make a step in the direction $e_i$. Since infinitely many new points are visited, $\annealedP(T_{k+1}<\infty|T_k<\infty)=1$.
\end{proof}

The annealed estimate in Lemma \ref{lem:tail_est} can easily be turned into a quenched one.
\begin{lemma}\label{lem:quen_tail_est}
\[
P\left(
\omega:\quenchedE_\omega(T_1)>k
\right)
\leq e^{-Ck^{\frac 13}}.
\]
\end{lemma}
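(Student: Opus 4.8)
The plan is to upgrade the annealed tail bound of Lemma~\ref{lem:tail_est} to the quenched statement by a straightforward Markov-type integration, using that $T_1$ under $\quenchedP_\omega^0$ depends only on the environment in a neighborhood of the origin and that $P$ is i.i.d. First I would note that $\quenchedE_\omega(T_1)=\sum_{n\ge 0}\quenchedP_\omega^0(T_1>n)$, so it suffices to control the expected tail $\sum_n \quenchedP_\omega^0(T_1>n)$ on a set of environments of small $P$-measure. Averaging over the environment, Lemma~\ref{lem:tail_est} gives $\int \quenchedP_\omega^0(T_1>n)\,dP(\omega)=\annealedP(T_1>n)<e^{-Cn^{1/3}}$, hence by Fubini $\quenchedE\big(\quenchedE_\omega(T_1)\big)=\sum_n\annealedP(T_1>n)<\infty$. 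A plain Markov inequality applied to the random variable $\omega\mapsto\quenchedE_\omega(T_1)$ then yields only $P(\quenchedE_\omega(T_1)>k)\le C/k$, which is far weaker than the claimed stretched-exponential decay, so the naive approach must be sharpened.

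The right idea is to exploit independence. The event $\{T_1>n\}$ is measurable with respect to the environment restricted to the ball $B(0,n)$ (the walk started at $0$ cannot have changed all $d$ coordinates and cannot have travelled more than $n$ steps), and more importantly, on the event that within the first $m$ steps the walk has visited $m^{1/3}$ distinct sites (which happens with overwhelming SRW probability), the conditional quenched probability of failing to upgrade all coordinates is, by genuine $d$-dimensionality and the i.i.d. structure, a product over essentially independent ``trials'' at distinct fresh sites. Concretely I would fix a scale and show: for the walk to have $T_1>n$, either the embedded coordinate-sum walk $W_k=\sum_i X_k^{(i)}$ stays in $[-n^{1/3},n^{1/3}]$ up to time $n$ --- a deterministic SRW event of probability $\le e^{-Cn^{1/3}}$ independent of $\omega$ --- or $W$ reaches $\pm n^{1/3}$, in which case the walk visits at least $n^{1/3}$ new sites, and at each new site there is, conditionally, a chance bounded below by a constant $\rho>0$ (under the i.i.d. law, averaged over that site's fresh environment) of making a step in whichever coordinate is still missing. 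Thus $\quenchedP_\omega^0(T_1>n)\le e^{-Cn^{1/3}}+Z_n(\omega)$, where $Z_n(\omega)$ is the quenched probability of a ``bad'' event driven by at least $n^{1/3}$ independent site-environments each of which is ``good'' with $P$-probability $\ge\rho$.

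The key step is then a large-deviations estimate for the number of good sites among the first, say, $n^{1/3}$ sites of $\Z^d$ in a fixed enumeration: by independence of $\nu$ across sites, $P$ of ``fewer than $(\rho/2)n^{1/3}$ of these sites are good'' is $\le e^{-c n^{1/3}}$, and on the complementary good environment set the quenched probability $Z_n(\omega)\le(1-c')^{(\rho/2)n^{1/3}}\le e^{-c'' n^{1/3}}$. Summing over $n$: outside an event of $P$-probability $\le\sum_n e^{-cn^{1/3}}\le e^{-c_0 k^{1/3}}$ (the tail of the sum being dominated by its first term up to the scale $k$), we get $\quenchedE_\omega(T_1)=\sum_n\quenchedP_\omega^0(T_1>n)\le\sum_{n\le k} 1+\sum_{n>k}(e^{-Cn^{1/3}}+e^{-c''n^{1/3}})\le k+O(1)$, while on that same good set one actually has the stronger bound $\quenchedP_\omega^0(T_1>n)\le e^{-c''n^{1/3}}$ for all $n$, giving $\quenchedE_\omega(T_1)\le C_1<\infty$; pushing the threshold from $C_1$ up to $k$ costs only $P(\text{environment near }0\text{ is bad at scale }k)\le e^{-Ck^{1/3}}$. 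This delivers Lemma~\ref{lem:quen_tail_est}.

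The main obstacle I anticipate is the bookkeeping needed to make ``the walk visits $\ge n^{1/3}$ new sites, and at each new site it has an independent fresh chance to take the missing step'' into a clean probabilistic statement: one must be careful that conditioning on the past of the walk (which has already examined some environments) does not destroy the independence used in the site large-deviation bound. The standard remedy --- already implicit in the proofs of Lemmas~\ref{lem:well_defined} and \ref{lem:tail_est} --- is that each time the walk reaches a genuinely new site it samples an environment that is independent of everything seen so far, so one can couple the sequence of first-visit decisions with an i.i.d.\ sequence and apply a Chernoff bound to that; this is routine but is where all the care goes. Everything else (the SRW part, Fubini, summing the tails) is elementary.
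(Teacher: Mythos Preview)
Your diagnosis is right --- plain Markov applied to $\omega\mapsto\quenchedE_\omega(T_1)$ only gives $C/k$ --- but your remedy is far heavier than needed, and the ``fixed enumeration'' step has a real gap: the sites the walk visits depend on $\omega$, so a large-deviations count of good sites in a predetermined list does not control the quenched quantity $Z_n(\omega)$. The coupling you invoke at the end is precisely the \emph{annealed} argument behind Lemma~\ref{lem:tail_est} and does not by itself yield a quenched bound; to make it quenched you would still need a further layer (e.g.\ Markov on each $\quenchedP_\omega(T_1>n)$ and a union bound over $n$), at which point the environment large deviations were never needed.

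The paper's proof is a three-line trick that bypasses all of this. Since $\quenchedE_\omega(T_1)=\sum_{j\ge 0}\quenchedP_\omega(T_1>j)$ and each summand is at most $1$, the event $\{\quenchedE_\omega(T_1)>k\}$ forces the tail
\[
A(\omega):=\sum_{j\ge k/2}\quenchedP_\omega(T_1>j)\ >\ k/2.
\]
But $E[A(\omega)]=\sum_{j\ge k/2}\annealedP(T_1>j)\le Ck^3 e^{-Ck^{1/3}}$ by Lemma~\ref{lem:tail_est}, and now Markov applied to $A(\omega)$ gives the claim directly. The fix for ``Markov is too weak'' is not to reopen the structure of the environment but to apply Markov to the right random variable: the tail sum, whose $P$-expectation is already stretched-exponentially small.
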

\begin{proof}
Note that if $\quenchedE_\omega(T_1)>k$, then
\[
A(\omega)=\sum_{j=k/2}^\infty \quenchedP_\omega(T_1>j) > k/2.
\]

Now, 
\[
E(A(\omega))
=\sum_{j=k/2}^\infty \annealedP(T_1>j)
\leq \sum_{j=k/2}^\infty e^{-Cj^\frac 13}\leq Ck^3 e^{-Ck^\frac 13}
\]
Markov's inequality completes the proof.

\end{proof}

An immediate yet useful corollary of Lemma \ref{lem:quen_tail_est} is the following.
\begin{lemma}\label{lem:for7}
For every $0<p<\infty$, 
\[
E\left[
\quenchedE_\omega(T_1^p)
\right]<\infty.
\]
\end{lemma}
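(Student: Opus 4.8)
The plan is to derive this from the quenched tail estimate in Lemma~\ref{lem:quen_tail_est} by a direct integration argument. First I would write, for a fixed environment $\omega$,
\[
\quenchedE_\omega(T_1^p)=\sum_{n=0}^\infty\big[(n+1)^p-n^p\big]\,\quenchedP_\omega(T_1>n)\le C_p\sum_{n=0}^\infty (n+1)^{p-1}\quenchedP_\omega(T_1>n),
\]
so it suffices to bound $E\big[\sum_{n}(n+1)^{p-1}\quenchedP_\omega(T_1>n)\big]$. Exchanging the (nonnegative) sum and the expectation, this equals $\sum_n (n+1)^{p-1}\annealedP(T_1>n)$, which is finite by Lemma~\ref{lem:tail_est} since $\sum_n (n+1)^{p-1}e^{-Cn^{1/3}}<\infty$. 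Thus $E[\quenchedE_\omega(T_1^p)]<\infty$ directly, and in fact one does not even need Lemma~\ref{lem:quen_tail_est} for this route.

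Alternatively, to use Lemma~\ref{lem:quen_tail_est} as stated, I would split according to the value of $\quenchedE_\omega(T_1)$. Let $g(\omega)=\quenchedE_\omega(T_1)$. A conditional Jensen / moment comparison is not available for the random variable $T_1$ under $\quenchedP_\omega$ without more structure, so instead I would control moments of $g$: Lemma~\ref{lem:quen_tail_est} gives $P(g>k)\le e^{-Ck^{1/3}}$, hence $E[g^q]<\infty$ for every $q$. To pass from moments of the conditional \emph{mean} to the conditional $p$-th \emph{moment}, I would use the layer-cake identity together with the annealed bound: for any $p\ge 1$,
\[
E\big[\quenchedE_\omega(T_1^p)\big]=\annealedE(T_1^p)=\int_0^\infty p\,t^{p-1}\annealedP(T_1>t)\,\textd t\le \int_0^\infty p\,t^{p-1}e^{-Ct^{1/3}}\,\textd t<\infty,
\]
where the stretched-exponential tail makes the integral converge for every finite $p$. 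For $0<p<1$ the bound is even easier since $T_1^p\le 1+T_1$. This is really the same computation as the first route; the point is that the tower property $E[\quenchedE_\omega(T_1^p)]=\annealedE(T_1^p)$ reduces everything to the annealed tail bound of Lemma~\ref{lem:tail_est}.

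The only mild subtlety — and the one place to be careful — is the interchange of summation (or integration) with the expectation over $\omega$, which is justified by Tonelli's theorem since all integrands are nonnegative, and the observation that the annealed law $\annealedP$ is exactly $E$ applied to $\quenchedP_\omega$, so $\annealedE(\,\cdot\,)=E[\quenchedE_\omega(\,\cdot\,)]$ by definition of the annealed measure. There is no real obstacle here: the hard analytic work is already contained in Lemma~\ref{lem:tail_est}, and Lemma~\ref{lem:for7} is a routine consequence. I would state the proof in two lines: $E[\quenchedE_\omega(T_1^p)]=\annealedE(T_1^p)=\int_0^\infty pt^{p-1}\annealedP(T_1>t)\,\textd t\le\int_0^\infty pt^{p-1}e^{-Ct^{1/3}}\textd t<\infty$, invoking Lemma~\ref{lem:tail_est} for the inequality.
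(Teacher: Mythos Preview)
Your proof is correct. The paper does not give a detailed proof either; it simply declares the lemma an ``immediate yet useful corollary of Lemma~\ref{lem:quen_tail_est}.'' Your route is in fact slightly cleaner: you use the tower property $E[\quenchedE_\omega(T_1^p)]=\annealedE(T_1^p)$ to reduce directly to the annealed tail bound of Lemma~\ref{lem:tail_est}, bypassing the quenched estimate entirely. The paper's attribution to Lemma~\ref{lem:quen_tail_est} is a bit loose, since that lemma only controls the tail of $\quenchedE_\omega(T_1)$ rather than $\quenchedE_\omega(T_1^p)$; presumably what is meant is that the same Markov-inequality argument used to prove Lemma~\ref{lem:quen_tail_est} adapts to $T_1^p$, but your direct annealed computation avoids this detour. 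Either way the content is the same one-line observation that a stretched-exponential tail yields all polynomial moments.
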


\section{A maximum principle and a mean value inequality}\label{sec:max_princ}
In this section we prove a maximum principle which we will later use. It uses the same basic idea as the maximum principle of Kuo and Trudinger, \cite{KT}, \ignore{\red may be mention Alexandrov-Bakelman-Pucci} but the probabilistic and non-elliptic setting requires a new way of estimating the size of the set of the supporting hyperplanes, cf Lemma \ref{lem:ubndi}. We also state a mean value theorem, very similar to Theorem 12 of Guo and Zeitouni \cite{GZ}. The proof of the mean value theorem is very similar to that of Theorem 12 of \cite{GZ}.
\ifarxiv
It appears in Section \ref{sec:mean_value_proof}.
\else
It appears in the arxiv version of this paper, but not in the journal version.
\fi



For $N\in\N$ and $k=k(N)\in (0,N)\cap\Z$, let $T_1^{(N)}=T_1^{(N,k)}=\min\big(T_1, k\big)$.
Let $h:\Z^d\to\R$ be a real valued function, and for every $z\in\Z^d$, let
$L^{(N)}_\omega h(z):=h(z)-\quenchedE_\omega^z[h(X_{T_1^{(N)}})]$.

Let $Q\subseteq \Z^d$ be finite and connected, and let
$\partial^{(k)}Q=\{z\in \Z^d-Q: \exists_{x\in Q}\|z-x\|_\infty < k\}$.

We say that a 
 point $z\in Q$ is {\em exposed} if there exists $\beta=\beta(z,h)\in\R^d$ such that 
$h(z)-\langle\beta,z\rangle\geq h(x)-\langle\beta,x\rangle$
for every $x\in Q\cup\partial^{(k)}Q$. We let $D_h$ be the set of exposed points.
Further, we define the {\em angle of vision} $I_h(z)$ as follows:
\begin{equation}\label{eq:defIz}
I_h(z)=\left\{
\beta\in\R^d: \forall_{x\in Q\cup\partial^{(k)}Q}  h(x)\leq h(z)+\langle\beta,x-z\rangle
\right\}.
\end{equation}
This is the set of hyperplanes that touch the graph of $h$ at $(z,h(z))$ and are above the graph of  $h$ all over
$Q\cup\partial^{(k)}Q$. A point $z$ is exposed if and only if $I_h(z)$ is not empty.

%

\begin{theorem}[Maximum Principle]\label{thm:max_princ}
There exists $N_0$ such that for every $N>N_0$ and every $0<k<N$, every balanced environment 
$\omega$ and every $Q$ of diameter $N$,
if for every $z\in Q$
\begin{equation}\label{eq:condformax}
\quenchedP_\omega^z\big(T_1>k\big)<e^{-(\log N)^3}
\end{equation}
then
\[
\max_{z\in Q}h(z)-\max_{z\in \partial^{(k)}Q}h(z)
\leq 6 N \left(
\sum_{z\in Q} {\bf 1}_{z\in D_h}|L^{(N)}_\omega h(z)|^d
\right)^{\frac 1d}
\]
\end{theorem}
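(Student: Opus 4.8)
The plan is to follow the classical Alexandrov–Bakelman–Pucci strategy adapted by Kuo–Trudinger and Guo–Zeitouni to the discrete setting, with the key new ingredient being a probabilistic estimate on the size of the image of the "normal map" $z\mapsto I_h(z)$. First I would normalize: subtract $\max_{z\in\partial^{(k)}Q}h(z)$ and assume the left-hand side $m:=\max_{z\in Q}h(z)-\max_{z\in\partial^{(k)}Q}h(z)$ is positive (else there is nothing to prove), and let $z_0\in Q$ achieve the maximum of $h$. The geometric heart of the ABP method is the \emph{measure lower bound}: the union $\bigcup_{z\in D_h}I_h(z)\subseteq\R^d$ contains the ball $B(0,r)$ with $r=m/(3N)$ (or a comparable constant), because $\diam(Q\cup\partial^{(k)}Q)\le N+2k\le 3N$, so any slope $\beta$ with $\|\beta\|\le m/(3N)$, when one slides the hyperplane $x\mapsto\langle\beta,x\rangle+c$ down onto the graph of $h$, must first touch at an interior exposed point (the boundary values are too low to be touched first). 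Hence
\[
\Vol\Big(\bigcup_{z\in D_h}I_h(z)\Big)\ \ge\ \Vol\big(B(0,m/(3N))\big)\ =\ c_d\,(m/(3N))^d .
\]

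Next comes the \emph{measure upper bound}, where the non-elliptic probabilistic argument enters. For each exposed $z$, I would bound $\Vol(I_h(z))$ in terms of $|L^{(N)}_\omega h(z)|$. The point is that $I_h(z)$ is a convex set, and for $\beta\in I_h(z)$ one has $h(x)-h(z)\le\langle\beta,x-z\rangle$ for all $x\in Q\cup\partial^{(k)}Q$; taking expectation under $\quenchedE_\omega^z$ against the walk stopped at $T_1^{(N)}$ — which stays inside $Q\cup\partial^{(k)}Q$ once $N_0$ is large and \eqref{eq:condformax} holds (this is exactly where the hypothesis $\quenchedP_\omega^z(T_1>k)<e^{-(\log N)^3}$ is used, to guarantee the stopped walk has not exited $\partial^{(k)}Q$ with overwhelming probability, and to control the rare exit event) — and crucially using that the walk is a \emph{martingale} under the balanced assumption, so $\quenchedE_\omega^z[X_{T_1^{(N)}}-z]=0$, one gets
\[
L^{(N)}_\omega h(z)=h(z)-\quenchedE_\omega^z[h(X_{T_1^{(N)}})]\ \ge\ -\big\langle\beta,\ \quenchedE_\omega^z[X_{T_1^{(N)}}-z]\big\rangle-(\text{error})\ \gtrsim\ -(\text{small error}),
\]
so in fact one wants the reverse-direction estimate: $I_h(z)$ must be contained in a region controlled by how much $h$ can drop on average in one rescaled step, and a direct convexity/covering argument (as in Lemma~\ref{lem:ubndi}, which I would invoke) gives $\Vol(I_h(z))\le C_d\,|L^{(N)}_\omega h(z)|^d$. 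The martingale property is what forces the relevant "one-step displacement functional" to be genuinely $d$-dimensional and bounded below, replacing the uniform ellipticity used in \cite{KT}.

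Finally I would assemble the pieces. Since the sets $I_h(z)$, $z\in D_h$, cover $B(0,m/(3N))$,
\[
c_d\Big(\frac{m}{3N}\Big)^d\ \le\ \sum_{z\in D_h}\Vol\big(I_h(z)\big)\ \le\ C_d\sum_{z\in Q}{\bf 1}_{z\in D_h}\,\big|L^{(N)}_\omega h(z)\big|^d,
\]
and taking $d$-th roots yields $m\le 3N\,(C_d/c_d)^{1/d}\big(\sum_{z\in Q}{\bf 1}_{z\in D_h}|L^{(N)}_\omega h(z)|^d\big)^{1/d}$; one then checks the dimensional constants come out at most $6$ (choosing $N_0$ large enough that the error terms from the rare event $\{T_1>k\}$ are absorbed, e.g. dominated by a factor $2$). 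I expect the main obstacle to be the rigorous version of the measure upper bound, i.e. Lemma~\ref{lem:ubndi}: in the elliptic case one bounds the Jacobian of the normal map pointwise by ellipticity, whereas here one must run a stopping-time/martingale argument and carefully handle the non-negligible-yet-exponentially-small probability that the stopped rescaled walk leaves $Q\cup\partial^{(k)}Q$, showing this contributes only a lower-order term to $\Vol(I_h(z))$ that the constant $6$ (versus the sharp $c_d$) comfortably accommodates.
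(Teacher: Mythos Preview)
Your overall architecture is exactly the paper's: a lower bound $\lambda\big(\bigcup_{z\in Q}I_h(z)\big)\ge (m/2N)^d$ (Lemma~\ref{lem:lbndi}, proved as in \cite{KT}) and a pointwise upper bound $\lambda(I_h(z))\le\big[(3L^{(N)}_\omega h(z))^+\big]^d$ (Lemma~\ref{lem:ubndi}), combined by subadditivity. Where your proposal has a real gap is in the upper bound. Averaging the supporting inequality $h(x)-h(z)\le\langle\beta,x-z\rangle$ against the full law of $X_{T_1^{(N)}}$ and invoking $\quenchedE_\omega^z[X_{T_1^{(N)}}-z]=0$ only yields $L^{(N)}_\omega h(z)\ge 0$ on $D_h$; it gives no control on the \emph{extent} of $I_h(z)$ in any direction, hence no volume bound. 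The paper's key idea, which you are missing, is to condition on the event $A_{i,N}^{(\pm)}$ that the \emph{first} step in coordinate $i$ (before time $k$) is $\pm e_i$. By the balanced property each has probability $\tfrac12$, and averaging the supporting inequality separately over $A_{i,N}^{(+)}$ and $A_{i,N}^{(-)}$ traps $\langle\beta,O^{(i)}_\omega(z)\rangle$ in an interval of length $2L^{(N)}_\omega h(z)$, where $O^{(i)}_\omega(z):=\quenchedE_\omega^z[X_{T_1^{(N)}}\mid A_{i,N}^{(+)}]-z$. Thus $I_h(z)$ lies in a parallelepiped with volume $(2L^{(N)}_\omega h(z))^d/\det(M)$, $M$ being the matrix with columns $O^{(i)}_\omega(z)$; Claim~\ref{claim:oomega} shows $\|O^{(i)}_\omega(z)-e_i\|<e^{-(\log N)^2}$, so $\det(M)\to 1$. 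This conditioning trick is precisely the substitute for pointwise ellipticity.

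You have also misplaced the role of hypothesis~\eqref{eq:condformax}. Since $T_1^{(N)}\le k$ deterministically and the walk is nearest-neighbor, $X_{T_1^{(N)}}\in Q\cup\partial^{(k)}Q$ automatically whenever $X_0=z\in Q$; there is no exit event to control. The hypothesis enters instead in Claim~\ref{claim:oomega}: it bounds $\quenchedP_\omega^z(u_i>k)\le\quenchedP_\omega^z(T_1>k)<e^{-(\log N)^3}$, which is what forces $O^{(i)}_\omega(z)\approx e_i$ and hence $\det(M)\approx 1$. In other words, \eqref{eq:condformax} guarantees that the rescaled step is genuinely $d$-dimensional, not that the walk stays in the domain.
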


If $\Delta_N$ is a cube of side length $N$, then a more convenient way of writing the same thing is
\begin{eqnarray}\label{eq:max_princ}
\nonumber
\max_{z\in \Delta_N}h(z)-\max_{z\in \partial^{(k)}\Delta_N}h(z)
&\leq& 6 N^2 \| {\bf 1}_{ D_h}L^{(N)}_\omega h \|_{\Delta_N,d}\\
&\leq& 6 N^2 \left\| \big(L^{(N)}_\omega h\big)^+ \right\|_{\Delta_N,d}
\end{eqnarray}

where, as in \cite{GZ},
\[
\|f \|_{\Delta_N,p} = \left(
\frac 1{|\Delta_N|}\sum_{z\in \Delta_N}|f(z)|^p
\right)^\frac 1p
\]
is the $L^p$ norm with respect to the uniform probability measure on $\Delta_N$.

\vspace{0.2cm}

\begin{remark}\label{rem:log100}
Note that if $\omega$ is sampled according to an i.i.d. environment satisfying Assumption \ref{ass:main}, then by Lemma \ref{lem:quen_tail_est}, \eqref{eq:condformax} is almost surely satisfied for all large enough $N$, $k=(\log N)^{100}$ and any connected $Q$ of diameter $N$ that contains the origin. However, in this paper we also apply Theorem \ref{thm:max_princ} to environments that are not i.i.d, namely to environments that are the periodized versions of i.i.d. environments.
\end{remark}

We now state a mean value theorem, whose proof, which is essentially the same as the proof of Theorem 12 in \cite{GZ}, 
\ifarxiv
is postponed to Section \ref{sec:mean_value_proof}.
\else
appears in the arxiv version of this paper.
\fi
Let $B_N(x)=\{y\in\Bbb Z^d: |x-y|\le N\}$, and $\bar B_N=B_N\cup\partial^{(\log N)^{100}}B_N$. For $u:\bar B_N\to\R$ let $L_\omega u(z)=u(z)-\quenchedE_{\omega}^z(u(X_1)).$
%
%

\begin{theorem}\label{thm:mean_value}
For any $\sigma\in (0,1), 0<p\le d$ and $x_0\in\Z^d$ we can find $N_0=N_0(\sigma,p,d,x_0)$ and
$C=C(\sigma,p,d)$ such that $P$ almost surely
 if $N\ge N_0$ and  $u$ on $\bar B_N(x_0)$  satisfy
$$L_\omega u(x)=0, \quad x\in B_N(x_0)$$
then
$$\max_{B_{\sigma N(x_0)}}u \le C\|u^+\|_{B_N(x_0),p}.$$
\end{theorem}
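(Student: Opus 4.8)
The plan is to deduce the mean value inequality (Theorem~\ref{thm:mean_value}) from the Maximum Principle (Theorem~\ref{thm:max_princ}) via the classical Moser-type iteration, exactly along the lines of Guo--Zeitouni~\cite{GZ}. The key point is that an $L$-harmonic function $u$ on $B_N(x_0)$ satisfies $L_\omega u=0$, so that for the rescaled/truncated operator $L_\omega^{(N)}$ one still has good control: writing $v=u^+$ (or more precisely working with $(u-\ell)^+$ for suitable levels $\ell$), one checks that $v$ is $L_\omega$-subharmonic, i.e. $L_\omega v\le 0$, since $t\mapsto t^+$ is convex and $L_\omega$ is the generator of a (sub)martingale — here the \emph{balanced} assumption enters, as it makes $X_n$ a martingale under $P_\omega^z$. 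Then for powers $w=v^q$ with $q\ge 1$, convexity of $t\mapsto t^q$ on $[0,\infty)$ again gives $L_\omega w\le 0$ pointwise on $B_N(x_0)$, and one bounds $L_\omega^{(N)}w$ in terms of $L_\omega w$ plus an error coming from the truncation at time $k=(\log N)^{100}$, which by Remark~\ref{rem:log100} and Lemma~\ref{lem:quen_tail_est} is super-polynomially small $P$-almost surely for $N$ large.

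The main body of the argument is the iteration. First I would establish a \textbf{reverse Hölder / growth inequality}: for concentric boxes $B_{r'}\subset B_r\subseteq B_N(x_0)$ and exponents $q<q'$ with $q'/q$ a fixed ratio $\theta>1$, one has
\[
\|v\|_{B_{r'},q'} \le \Big(\frac{C r}{r-r'}\Big)^{\gamma}\,\|v\|_{B_r,q}
\]
for constants $C,\gamma$ depending only on $d$. This is obtained by applying \eqref{eq:max_princ} to the function $h=w=v^q$ on the box $B_r$ (whose diameter is $\le 2N$, so the hypothesis \eqref{eq:condformax} holds by Remark~\ref{rem:log100}), which gives $\max_{B_{r'}} w - \max_{\partial^{(k)}B_{r'}} w \lesssim N^2 \|(L_\omega^{(N)}w)^+\|_{B_{r'},d}$, then controlling $(L_\omega^{(N)}w)^+$ by the truncation error (harmlessly small) and finally passing from a sup bound to an $L^{q'}$-bound on a slightly smaller box by a standard covering/Young's-inequality manoeuvre; the factor $r/(r-r')$ tracks how much the box shrinks. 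Second, I would iterate this over a sequence of shrinking radii $r_j = \sigma N + 2^{-j}(1-\sigma)N$ and exponents $q_j = p\,\theta^{\,j}$, so that $q_j\to\infty$; since $\sum_j \gamma\log(C r_j/(r_j-r_{j+1})) \sum_j\theta^{-j}<\infty$ converges (the $\theta^{-j}$ weighting coming from taking $q_j$-th roots), the product of all the prefactors stays bounded by a constant $C=C(\sigma,p,d)$, and one concludes
\[
\max_{B_{\sigma N}(x_0)} v = \lim_{j\to\infty}\|v\|_{B_{r_j},q_j} \le C\,\|v\|_{B_N(x_0),p} = C\,\|u^+\|_{B_N(x_0),p}.
\]

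The step I expect to be the main obstacle is the \textbf{control of the truncation error}, i.e. showing that replacing the one-step operator $L_\omega$ by the rescaled operator $L_\omega^{(N)}$ (which uses $X_{T_1^{(N)}}$ rather than $X_1$) introduces only a negligible term in the estimate for $(L_\omega^{(N)} w)^+$. One writes $L_\omega^{(N)} w(z) = w(z) - E_\omega^z[w(X_{T_1^{(N)}})]$ and must compare this with $L_\omega w(z)\le 0$; the discrepancy involves the event $\{T_1>k\}$ and the values of $w=v^q$ on $\partial^{(k)}B_r$, which can be as large as $(\max_{\bar B_N} u)^q$. The key is that, by \eqref{eq:condformax} with $k=(\log N)^{100}$, the probability of this event is at most $e^{-(\log N)^3}$, which beats any polynomial factor $N^{2}$ and even the a priori bound on $w$ (which is only polynomially large once one has a crude first bound on $u$, obtainable e.g. by a preliminary application of the maximum principle); so the error is absorbed for $N\ge N_0$. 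Making the bookkeeping of exponents $q_j$, radii $r_j$ and the accumulated constants precise — and checking that $N_0$ can be chosen uniformly over the iteration — is the technical heart, but it is essentially identical to the corresponding argument in~\cite{GZ}, which is why the paper defers the details to the arXiv version.
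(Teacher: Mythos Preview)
Your proposal has a genuine gap: applying the maximum principle \eqref{eq:max_princ} to $h=w=(u^+)^q$ on a ball $B_{r'}$ gives only
\[
\max_{B_{r'}}w - \max_{\partial^{(k)}B_{r'}}w \le 6N^2\big\|(L_\omega^{(N)}w)^+\big\|_{B_{r'},d},
\]
and since $w$ is $L_\omega$-subharmonic on $B_N$ (as you correctly note), optional stopping gives $L_\omega^{(N)}w\le 0$ on any $B_{r'}$ with $r'+k\le N$. Thus the right-hand side vanishes and the inequality collapses to the triviality $\max_{B_{r'}}w\le \max_{\partial^{(k)}B_{r'}}w$. No ``covering/Young's-inequality manoeuvre'' can extract a reverse H\"older inequality from this; the maximum principle here is not a Caccioppoli-type energy estimate, and without a cutoff it yields nothing.

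The paper's proof (following \cite{GZ}) instead sets $v=\eta\,u^+$ with the cutoff $\eta(x)=(1-|x|^2/N^2)^\beta\mathbf 1_{|x|<N}$, so that $v$ vanishes near $\partial B_N$ and the maximum principle bounds $\max_{B_N}v$ outright. The price is that $L_\omega^{(N)}v$ no longer vanishes: the product rule produces cross terms that, at exposed points, are bounded by $C(\beta)\eta^{1-2/\beta}N^{-2}h_x^2\,u^+(x)$ where $h_x=\max\{|y-x|:a_\omega^{(N)}(x,y)>0\}$ is the step size of the rescaled walk. The essential new difficulty in the non-elliptic setting --- and the point your proposal misses entirely --- is that $h_x$ can be as large as $k=(\log N)^{100}$. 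The paper's fix is to observe, via Wald's identity, that on the bulk of $B_N$ one may replace $h_x^2$ by $T_\omega(x)=E_\omega^x[T_1^{(N)}]$, then apply H\"older to split off $\|T_\omega\|_{B_N,dr'}$, and finally invoke Lemma~\ref{lem:for7} and the ergodic theorem to bound this norm uniformly in $N$. The ``truncation error'' you worry about is not the issue (optional stopping kills it); the step-size control is.
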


\begin{proof}[Proof of Theorem \ref{thm:max_princ}]
As in \cite{KT}, we are mostly concerned with the angle of vision in any vertex, defined as follows:
Let $z\in Q$. Recall The angle of vision $I_h(z)$ as defined as in \eqref{eq:defIz}.

Equivalently to \cite{KT}, we will now state and use two simple geometrical lemmas. The proofs of these lemmas are postponed to immediately after the end of the current proof.

\begin{lemma}\label{lem:lbndi}
For every $N$ and $0<k<N$,
\[
\lambda\left(
\bigcup_{z\in Q}I_h(z)
\right)
\geq \left|\frac{\max_{z\in Q}h(z)-\max_{z\in \partial^{(k)}Q}h(z)}{2N}\right|^d,
\]
where $\lambda$ is Lebesgue's measure in $d$ dimensions.
\end{lemma}

\begin{lemma}\label{lem:ubndi}
Almost surely, for every large enough $N$, every $Q$ of diameter $N$, every $\omega$ satisfying \eqref{eq:condformax} and every $z\in Q\cap D_h$, 
\begin{equation}\label{eq:ubndi}
\lambda(I_h(z))\leq \left[\big(3L^{(N)}_\omega h(z)\big)^+\right]^d.
\end{equation}
\end{lemma}

The theorem now follows once we note that 
\[
\lambda\left(
\bigcup_{z\in \Delta_N}I_h(z)
\right)
\leq
\sum_{z\in \Delta_N}\lambda(I_h(z)).
\]

\end{proof}

\begin{proof}[Proof of Lemma \ref{lem:lbndi}]
This is identical to the proof of Lemma 2.2 in \cite{KT}.
\end{proof}

\begin{proof}[Proof of Lemma \ref{lem:ubndi}]
Let $\beta\in I_h(z)$. Fix $i\in\{1,\ldots,d\}.$ For a walk $\{X_n\}$ and $i=1,\ldots,d$, let $u_i=\min\{n:\alpha(n)=i\}\leq\infty$. We define the events
\[
A_i^{(+)}=\{X_{u_i}-X_{u_i-1}=e_i\ \mbox{and}\ u_i\leq T_1^{(N)}\}
\]
and
\[
A_i^{(-)}=\{X_{u_i}-X_{u_i-1}=-e_i\ \mbox{and}\ u_i\leq T_1^{(N)}\}. 
\]


Let $W$ be a random variable which takes $+1$ with probability $1/2$ and $-1$ with the same probability, and is independent of the walk. Let $A_i^0$ be the event $A_i^0=\{u_i>k\}.$ We define 
\[
A_{i,N}^{(+)}=\big(A_{i}^{(+)}\cap (A_i^0)^c\big)
\cup
\big(\{W=+1\}\cap (A_i^0)\big)
\]
and equivalently
\[
A_{i,N}^{(-)}=\big(A_{i}^{(-)}\cap (A_i^0)^c\big)
\cup
\big(\{W=-1\}\cap (A_i^0)\big).
\]

Note that $\quenchedP_\omega^z\big(A_{i,N}^{(+)}\big)=\quenchedP_\omega^z\big(A_{i,N}^{(-)}\big)=1/2$ and that $A_{i,N}^{(+)}$ and $A_{i,N}^{(-)}$ are disjoint events. Therefore,
\begin{equation}\label{eq:oomega}
\quenchedE_\omega^z(X_{T_1^{(N)}}|A_{i,N}^{(+)}) - z 
=
z - \quenchedE_\omega^z(X_{T_1^{(N)}}|A_{i,N}^{(-)}).
\end{equation}
Let $O^{(i)}_\omega(z)=\quenchedE_\omega^z(X_{T_1^{(N)}}|A_{i,N}^{(+)}) - z$.

\ignore{
Let $\bar u_i=\min(u_i,T_1^{(N)})$. Note that
$\quenchedE_\omega^z(\bar u_i)\leq\quenchedE_\omega^z(T_1)<\infty$.
Therefore, using the optional sampling theorem, we get that
for $P$-almost every $\omega$ and every point $z$,
\begin{equation}\label{eq:a+ave}
\quenchedE_\omega^z(X_{T_1}|A_i^{(+)})=z+e_i
\end{equation}

and

\begin{equation}\label{eq:a-ave}
\quenchedE_\omega^z(X_{T_1}|A_i^{(-)})=z-e_i.
\end{equation}
}

$\beta\in I_h(z)$, and therefore $\langle \beta,x\rangle \geq h(z+x)-h(z)$  for every $x\in Q\cup\partial^{(k)}Q$.
%
%
In particular, using the definition of $O^{(i)}_\omega(z)$ and \eqref{eq:oomega},

\begin{eqnarray}\label{eq:cube1}
\nonumber
&&\langle \beta,O^{(i)}_\omega(z) \rangle = \sum_{x\in Q\cup\partial^{(k)}Q} \langle \beta,x\rangle \quenchedP_\omega^z(X_{T^{(N)}_1}=x+z
|A_i^{(+)})\\
& \geq& \sum_{x\in Q\cup\partial^{(k)}Q} (h(x+z)-h(z)) \quenchedP_\omega^z(X_{T^{(N)}_1}=x
|A_i^{(+)})
\end{eqnarray}
%
%

Equivalently,
\begin{eqnarray}\label{eq:cube2}
\nonumber
&&\langle \beta,-O^{(i)}_\omega(z) \rangle = \sum_x \langle \beta,x\rangle \quenchedP_\omega^z(X_{T^{(N)}_1}=x+z
|A_i^{(-)})\\
& \geq& \sum_{x\in Q\cup\partial^{(k)}Q} (h(x+z)-h(z)) \quenchedP_\omega^z(X_{T^{(N)}_1}=x
|A_i^{(-)})
\end{eqnarray}

in other words,
\begin{eqnarray}\label{eq:cube3}
\nonumber
&&\sum_{x\in Q\cup\partial^{(k)}Q} (h(x+z)-h(z)) \quenchedP_\omega^z(X_{T^{(N)}_1}=x
|A_i^{(-)})\\
\nonumber
&\leq& \langle \beta,O^{(i)}_\omega(z) \rangle\\
 &\leq& -\sum_{x\in Q\cup\partial^{(k)}Q} (h(x+z)-h(z)) \quenchedP_\omega^z(X_{T^{(N)}_1}=x
|A_i^{(+)}),
\end{eqnarray}

so whenever $\beta$ exists, $\langle \beta,O^{(i)}_\omega(z) \rangle$ is in an interval of length bounded by
\begin{eqnarray*}
&-&\left[\sum_x (h(x+z)-h(z)) \quenchedP_\omega^z(X_{T^{(N)}_1}=x
|A_i^{(+)})
 +\sum_x (h(x+z)-h(z)) \quenchedP_\omega^z(X_{T_1}=x
|A_i^{(-)})
\right]\\
&=&2\sum_x (h(z)-h(x+z)) \quenchedP_\omega^z(X_{T_1}=x)
=2L^{(N)}_\omega h(z),
\end{eqnarray*}
where the summation is over $x\in Q\cup\partial^{(k)}Q$.
In particular, $L^{(N)}_\omega h(z)$ is non-negative if $\beta$ exists.

Therefore, $\lambda(I_h(z))$ is bounded by the volume of the parallelogram
\[
L=\left\{\gamma\in\R^d: \forall_i \ 0\leq \big\langle\gamma,O^{(i)}_\omega(z)\big\rangle\leq 2L^{(N)}_\omega h(z) \right\}.
\]
We thus need to estimate the volume of the parallelogram $L$. By standard linear algebra,
\[
\lambda(L) = (2L^{(N)}_\omega h(z))^d\det(M^{-1})
\]
where $M$ is the matrix whose columns are the vectors $O^{(i)}_\omega(z),\ 0\leq i\leq d$.
Therefore, we need to estimate the value of the vectors $O^{(i)}_\omega(z)$.
\begin{claim}\label{claim:oomega}
for every $i$, 
\[
\|e_i-O^{(i)}_\omega(z)\| < e^{-(\log N)^2}.
\]
\end{claim}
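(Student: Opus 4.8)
The plan is to use the martingale structure coming from balancedness, together with the tail bound \eqref{eq:condformax}, to reduce $O^{(i)}_\omega(z)$ to the displacement accumulated before the first step in direction $i$, everything else being absorbed into exponentially small errors. Here $\FF_n$ denotes the natural filtration of the walk. First I would rewrite, using $\quenchedP_\omega^z(A_{i,N}^{(+)})=\tfrac12$,
\[
O^{(i)}_\omega(z)=2\,\quenchedE_\omega^z\big[(X_{T_1^{(N)}}-z)\,\mathbf 1_{A_{i,N}^{(+)}}\big],
\]
and split $\mathbf 1_{A_{i,N}^{(+)}}=\mathbf 1_{A_i^{(+)}\cap(A_i^0)^c}+\mathbf 1_{\{W=+1\}}\mathbf 1_{A_i^0}$. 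On $A_i^0=\{u_i>k\}$ one has $T_1^{(N)}=k$ and $\|X_{T_1^{(N)}}-z\|_1\le k$, while $\quenchedP_\omega^z(A_i^0)\le\quenchedP_\omega^z(T_1>k)<e^{-(\log N)^3}$ by \eqref{eq:condformax} (recall $u_i\le T_1$); since $W$ is independent of the walk, the $A_i^0$ part contributes at most $\tfrac12 k\,e^{-(\log N)^3}$ in norm.

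For the main term set $\bar u_i:=u_i\wedge T_1^{(N)}=u_i\wedge k$. The event $A_i^{(+)}\cap(A_i^0)^c=\{u_i\le k,\ X_{u_i}-X_{u_i-1}=e_i\}$ is $\FF_{\bar u_i}$-measurable, and $n\mapsto X_{n\wedge T_1^{(N)}}$ is a bounded $\quenchedP_\omega^z$-martingale (balancedness), so optional stopping gives $\quenchedE_\omega^z[X_{T_1^{(N)}}\mid\FF_{\bar u_i}]=X_{\bar u_i}$. On that event $\bar u_i=u_i$ and $X_{u_i}=X_{u_i-1}+e_i$, hence
\[
\quenchedE_\omega^z\big[(X_{T_1^{(N)}}-z)\mathbf 1_{A_i^{(+)}\cap(A_i^0)^c}\big]
=\quenchedE_\omega^z\big[(X_{u_i-1}-z)\mathbf 1_{A_i^{(+)}\cap(A_i^0)^c}\big]+e_i\,\quenchedP_\omega^z\big(A_i^{(+)}\cap(A_i^0)^c\big).
\]
This is the crucial reduction: although the walk may move in coordinate $i$ several times between $u_i$ and $T_1^{(N)}$, conditioning on $\FF_{\bar u_i}$ collapses that post-$u_i$ excursion.

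The two remaining quantities are handled by balanced symmetry and \eqref{eq:condformax}. Conditionally on $\FF_{m-1}$ and on $\{u_i\ge m\}$, the $m$-th step equals $+e_i$ and $-e_i$ with the same probability $\omega(X_{m-1},e_i)$; summing over $m\le k$ gives $\quenchedP_\omega^z(A_i^{(+)}\cap(A_i^0)^c)=\tfrac12\quenchedP_\omega^z(u_i\le k)=\tfrac12-\delta$ with $0\le\delta\le\tfrac12 e^{-(\log N)^3}$, and the same symmetry yields $\quenchedE_\omega^z[(X_{u_i-1}-z)\mathbf 1_{A_i^{(+)}\cap(A_i^0)^c}]=\tfrac12\,\quenchedE_\omega^z[(X_{u_i-1}-z)\mathbf 1_{u_i\le k}]$. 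To bound the latter I would introduce $Z_n:=X_{n\wedge(u_i-1)}-z$, whose increment equals $(X_n-X_{n-1})\mathbf 1_{u_i>n}$; since no direction-$i$ step occurs before time $u_i$ and every other direction is symmetric, $\quenchedE_\omega^z[(X_n-X_{n-1})\mathbf 1_{u_i>n}\mid\FF_{n-1}]=\mathbf 1_{u_i>n-1}\sum_{j\ne i}e_j(\omega(X_{n-1},e_j)-\omega(X_{n-1},-e_j))=0$, so $(Z_n)$ is a martingale with $Z_0=0$. Evaluating at $n=k$, where $Z_k=(X_{u_i-1}-z)\mathbf 1_{u_i\le k}+(X_k-z)\mathbf 1_{u_i>k}$, gives $\quenchedE_\omega^z[(X_{u_i-1}-z)\mathbf 1_{u_i\le k}]=-\quenchedE_\omega^z[(X_k-z)\mathbf 1_{u_i>k}]$, of norm at most $k\,\quenchedP_\omega^z(u_i>k)\le k\,e^{-(\log N)^3}$.

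Collecting everything, $O^{(i)}_\omega(z)=e_i+R$ with $\|R\|\le C(k+1)e^{-(\log N)^3}\le CN\,e^{-(\log N)^3}$, which is $<e^{-(\log N)^2}$ for all sufficiently large $N$, proving the claim. The main obstacle is precisely the reduction to $X_{u_i-1}$ in the second paragraph; once the post-$u_i$ wandering has been eliminated by optional stopping, the rest is routine bookkeeping with the balanced symmetry and the tail estimate \eqref{eq:condformax}.
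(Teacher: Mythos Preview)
Your proof is correct and uses the same core ingredients as the paper: optional stopping at $\bar u_i=u_i\wedge k$ to collapse the post-$u_i$ excursion, and the balanced symmetry at the moment of the first $i$-step to pass from $\mathbf 1_{A_i^{(+)}}$ to $\tfrac12\mathbf 1_{\{u_i\le k\}}$.

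The organizational difference is that the paper splits $O^{(i)}_\omega(z)$ into its $i$-th component $O_i$ and the remaining part $O_{\not i}$, and obtains $O_i=\quenchedP_\omega^z(A_i^{(+)}\mid A_{i,N}^{(+)})=1-\quenchedP_\omega^z(A_i^0)$ and $O_{\not i}=0$ \emph{exactly}. In fact your own computation contains this exact cancellation, though you do not observe it: your three error terms are $-\quenchedE_\omega^z[(X_k-z)\mathbf 1_{u_i>k}]$ (from the $Z_n$-martingale), $+\quenchedE_\omega^z[(X_k-z)\mathbf 1_{A_i^0}]$ (the $A_i^0$ part, since $W$ is independent), and $-2\delta e_i=-e_i\,\quenchedP_\omega^z(A_i^0)$; since $A_i^0=\{u_i>k\}$ the first two cancel and one is left with $O^{(i)}_\omega(z)=e_i(1-\quenchedP_\omega^z(A_i^0))$, whence $\|O^{(i)}_\omega(z)-e_i\|<e^{-(\log N)^3}$ directly. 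Your route of bounding each piece by $Ck\,e^{-(\log N)^3}$ and then invoking $k<N$ to get below $e^{-(\log N)^2}$ for large $N$ is perfectly valid, just slightly less sharp.
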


Noting that the determinant is a continuous function, we get that \eqref{eq:ubndi} holds for all large enough $N$.

\end{proof}

\begin{proof}[Proof of Claim \ref{claim:oomega}]
We calculate separately $O_i=\langle O^{(i)}_\omega(z), e_i\rangle$ and $O_{\not i}=O^{(i)}_\omega(z) - O_i$.
 
By the optional sampling theorem,
\begin{eqnarray*}
O_i &=& \quenchedP_\omega^z(A_i^{(+)}|A_{i,N}^{(+)})
\left[\quenchedE_\omega^z(X_{T_1^{(N)}}|A_{i}^{(+)}) - z\right]\\
&+&\quenchedP_\omega^z(A_i^{0}|A_{i,N}^{(+)})
\left[\quenchedE_\omega^z(X_{T_1^{(N)}}|A_{i,N}^{0}) - z\right]\\
&= & \quenchedP_\omega^z(A_i^{(+)}|A_{i,N}^{(+)}),
\end{eqnarray*}
and therefore 
\begin{equation}\label{eq:oi}
|O_i-1|<e^{-(\log N)^3}
\end{equation}

Using the optional sampling theorem one more time,

\begin{eqnarray}\label{eq:onoti}
O_{\not i} = 0.
\end{eqnarray}

The claim follows from \eqref{eq:oi} and \eqref{eq:onoti}.

\end{proof}

\begin{remark}\label{rem:balancedmart}
Note that the rescaled walk is balanced in the following sense: For every $x$, $\omega$, $N$ and $k$,
\[
\sum_{y}(y-x)\quenchedP_\omega^x\left(X_{T_1^{(N)}}=y\right)=0.
\]
\end{remark}

\section{Stationary measure for the periodized environment}\label{sec:perstat}
As in \cite{GZ} and \cite{Lawler82}, in this section we analyze the stationary measure of the walk on a periodized environment. Unlike those papers, here we consider a slight variation of the periodized environment, namely the reflected periodized environment, see Figure \ref{fig:reflect}. 
The advantage of the choice of the reflected periodized environment over the one appearing in 
\cite{GZ} and \cite{Lawler82} is that every walk in the reflected periodized environment is (up to, possibly,  some holding times) a legal walk in the original environment, which is not the case for the periodized environment appearing in \cite{GZ} and \cite{Lawler82}. This property of the reflected periodized environment will turn out to be very useful in Section \ref{sec:high_d}.

\begin{figure}[t]
\begin{center}
\includegraphics[width=4.36in]{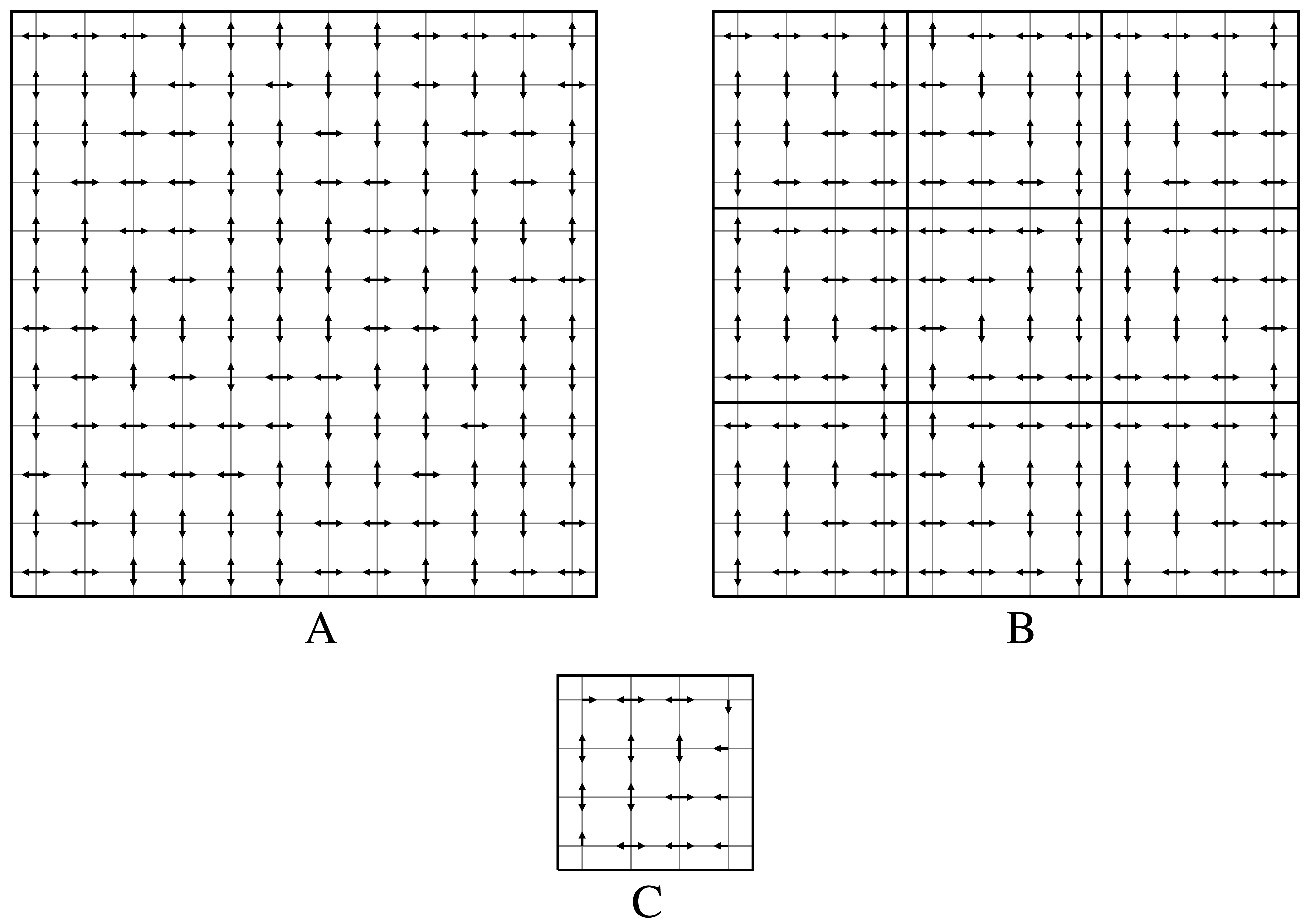}
\caption{\sl 
The configuration under the letter {\em A} is the original configuration. The configuration under the letter {\em B} is the (reflected) periodized configuration with period 4, and the configuration under the letter {\em C} is the effective environment for the reflected random walk in the $4\times 4$ box. In places where there is only an arrow pointing in one direction, the walker stays put with probability $\frac 12.$ The origin in this picture is at the upper left corner.
}
\label{fig:reflect}
\end{center}
\end{figure}

The conclusion of this section is that for some $p>1$, the $L^p$ norm of the Radon-Nikodym derivative of a stationary measure with respect to the uniform measure on the period-cube is bounded as a function of the size of the period. As in \cite{GZ} and \cite{Lawler82} this will turn out to be the crucial step in the way of proving a CLT.

Differently from \cite{GZ} and \cite{Lawler82}, we do it here with the stationary measure w.r.t. the rescaled walk and not w.r.t. the original walk, because the original walk does not necessarily obey the maximum principle (Theorem \ref{thm:max_princ}). The main idea is an idea that we learned from Theorem 5 of \cite{GZ}, but as we work with the rescaled walk, which is less regular than the original walk, the whole argument becomes significantly more complex. In Subsection \ref{sec:non-rescaled} we transfer the result from stationary measures w.r.t. the rescaled walk to stationary measures w.r.t. the original walk.

\subsection{Definition of the periodized environment}
\newcommand{\frN}{2N}
For every environment $\omega\in\Omega$ and $N\in\N$, we define the periodized environment 
$\omega^{(N)}$ as follows:

First we define $\omega^{(N)}(z)$ for $z$ in the cube $[0,2N-1]^d$: for $z=(z_1,z_2,\ldots,z_d)$
we define $\omega^{(N)}(z)=\omega(z^\prime)$ where 
\[
z^\prime = \big(
\min(z_1, 2N-1-z_1),\min(z_2, 2N-1-z_2),\ldots,\min(z_d, 2N-1-z_d)
\big).
\]

Then for general $z$ we define $\omega^{(N)}(z)=\omega^{(N)}(z \mod 2N)$ where for every coordinate $i$ we define
$(z\mod 2N)_i:=z_i-2N\cdot\lfloor\frac{z_i}{2N}\rfloor$.  
For a given environment $\omega$ and $N\in\N$, let $P_{\omega,N}$ be the  uniform distribution over all $(2N)^d$ shifts of $\omega^{(N)}$. By $E_{\omega,N}$ we denote the expectation with respect to the distribution $P_{\omega,N}$.
 As in \cite{GZ}, due to the ergodic theorem and to the fact that the planes of reflection are a negligible set, $P$-almost surely $P_{\omega,N}$ converges weakly to $P$.
 
Note that the random walk in $\Z^d$ under $\omega^{(N)}$ corresponds to the reflected random walk on $\Delta_N=[0,N)^d$ under $\omega$, with some holding times. Indeed, if we define the function $f:\Z^d\to \Delta_N$ to be
\begin{eqnarray}\label{eq:deff}
\nonumber
f(z)&=&(g(z_1),\ldots,g(z_d)), \mbox{ where}\\
  g(x):&=&\min\big( x\mod 2N, 2N-1-(x\mod 2N) \big),
\end{eqnarray}
then $\{f(X_n)\}_{n=1}^\infty$ follows the law of a random walk on $\Delta_N$ under $\omega$ which is reflected at the boundaries of the cube, with a holding time when the random walker wants to leave the cube (again, see Figure \ref{fig:reflect}).

Therefore, we can state and prove lemmas similar to Lemmas \ref{lem:tail_est} and \ref{lem:quen_tail_est}.

\begin{lemma}\label{lem:ntail_est}
There exists a constant $C$ such that for every $z$, every $N$ and every $k<\frN$,
\[
\int_{\Omega}\quenchedP^z_{\omega^{(N)}}(T_1>k)dP(\omega)
<e^{-Ck^{\frac 13}}.
\]
\end{lemma}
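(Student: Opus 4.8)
The plan is to reduce Lemma~\ref{lem:ntail_est} to the annealed estimate of Lemma~\ref{lem:tail_est} via the function $f$ introduced in \eqref{eq:deff}. Recall that under $\omega^{(N)}$ the walk $\{f(X_n)\}$ follows a reflected random walk on $\Delta_N$ with holding times, and that the reflected trajectory agrees with a genuine trajectory in the original environment $\omega$ up to these holding times. Since adding holding times only delays the process, and since $T_1$ counts the time until all $d$ coordinates of $\alpha$ have been seen, a holding step never decreases $T_1$; hence it suffices to control the number of genuine (non-holding) steps needed, which is precisely a $T_1$-type quantity for the original environment started from the projected point $f(z)$.

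First I would set up the coupling: start the walk in $\omega^{(N)}$ at $z$, and observe that the sequence of coordinate directions $\alpha(1),\alpha(2),\ldots$ that are actually taken (ignoring holding steps) is distributed exactly as the coordinate-direction sequence of a genuine walk in $\omega$ started at $f(z)$ and reflected at the faces of $\Delta_N$. The key point, used already implicitly in the proof of Lemma~\ref{lem:tail_est}, is that the relevant estimate there only used two ingredients: (i) the projection $W_n = \sum_i X_n^{(i)}$ performs a simple random walk that visits new sites at a definite rate, and (ii) at each newly visited site the annealed law of the next step is bounded away from zero in every direction, because the environment is i.i.d.\ and that site has not been examined before. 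Both survive the passage to the reflected periodized walk: the reflection is a deterministic map that does not disturb i.i.d.-ness of the still-unseen coordinates, and freshness of a site in $\Delta_N$ corresponds to freshness of the corresponding site of $\omega$ (this is exactly why the \emph{reflected} periodization is used rather than the plain one). The constraint $k < 2N$ guarantees that during the first $k$ steps the walk cannot wrap around the torus and revisit an already-seen periodized copy of a site, so the freshness argument is not spoiled by periodicity.

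Concretely, I would run the same two-step estimate as in Lemma~\ref{lem:tail_est}: conditionally on the event that the projected simple random walk has range at least $k^{1/3}$ within the first $n=k$ steps (so that at least $\sim k^{1/3}$ fresh sites are visited), each fresh site gives an independent-in-annealed-law chance, bounded below by a constant $c>0$, to complete the missing set of coordinate directions; so the conditional probability that $T_1 > k$ decays like $e^{-Ck^{1/3}}$. And by the standard simple-random-walk range estimate, the probability that the range stays below $k^{1/3}$ in $k$ steps is itself at most $e^{-Ck^{1/3}}$. Integrating over $\omega$ (i.e.\ taking the annealed probability, which is what $\int_\Omega \quenchedP^z_{\omega^{(N)}}(\cdot)\,dP(\omega)$ is) and combining the two bounds gives the claim with a possibly smaller constant $C$, uniformly in $z$, $N$, and $k < 2N$.

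The main obstacle is making the ``freshness'' bookkeeping under reflection fully rigorous: one must check that when the reflected walk bounces off a face of $\Delta_N$ and then explores a region it has effectively already partly traversed on the other side of the mirror, the conditional annealed law of the next move at a point of $\Delta_N$ not yet visited by the \emph{reflected} walk is still bounded away from zero. This is true because a point of $\Delta_N$ unvisited by the reflected walk corresponds to a point of $\Z^d$ unvisited by the underlying genuine walk, so its environment is fresh; but one has to phrase this carefully in terms of the history $\sigma$-field and the i.i.d.\ structure, exactly as is done (a bit tersely) in the proofs of Lemmas~\ref{lem:tail_est} and~\ref{lem:well_defined}. Everything else is a transcription of those arguments with $X_n$ replaced by the lifted reflected walk.
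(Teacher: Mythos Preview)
Your overall strategy is the paper's: rerun the argument of Lemma~\ref{lem:tail_est} for the periodized walk, using the simple random walk $W_n=\sum_i X_n^{(i)}$ to produce many fresh sites at which the annealed step law is uniformly nondegenerate. But there is a genuine gap exactly where the paper does extra work. The freshness half of your ``main obstacle'' is not the issue: you are right that if $y\in\Delta_N$ has not been visited by the reflected walk $f(X_\cdot)$ then $\omega(y)$ is unexamined and the annealed next step from $y$ is bounded below. The gap is in the \emph{counting}: you assert that range $\geq k^{1/3}$ for $W_n$ yields ``$\sim k^{1/3}$ fresh sites'', but ``fresh'' here must mean fresh for $f(X_n)$ in $\Delta_N$, whereas a new value of $W_n$ only guarantees that $X_n$ is a new point of $\Z^d$, not that $f(X_n)$ is a new point of $\Delta_N$. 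When the starting point $z$ has a coordinate near a face of $\Delta_N$, the walk $X_n$ can cross that face and then many new $X_n$-points fold under $f$ onto already-visited $\Delta_N$-points. Your hypothesis $k<2N$ rules out periodic wrap-around but not this reflection aliasing, which occurs inside a single fundamental domain; the paper flags precisely this by noting that $\omega^{(N)}$ is ``not even locally i.i.d.\ (consider, for example, any neighborhood of the point $0$)''.

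The paper closes the gap with a short case analysis on the coordinates of $z$: those within $k^{1/3}/d$ of a face of $\Delta_N$ versus those farther away. For a ``far'' coordinate $i$, the first $k^{1/3}/d$ new maxima of $|X_n^{(i)}-z^{(i)}|$ occur before $X^{(i)}$ can reach a face, so each such time gives a new value of $f(X_n)^{(i)}$ and hence a genuinely new $\Delta_N$-point. For a ``near'' coordinate $i$, one instead records the times when $|X_n^{(i)}-z^{(i)}|$ attains the levels $2k^{1/3}/d+1,\ldots,3k^{1/3}/d$; at these levels the folded coordinate $g(X_n^{(i)})$ is certifiably beyond any previously attained value of $g(X_\cdot^{(i)})$, so again one collects $k^{1/3}/d$ new $\Delta_N$-points. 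Since with probability at least $1-e^{-Ck^{1/3}}$ some coordinate satisfies $\max_{n\le k}|X_n^{(i)}-z^{(i)}|>3k^{1/3}/d$, one of the two cases applies and the argument then concludes exactly as you sketch.
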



\begin{lemma}\label{lem:quen_ntail_est} There exists a constant $C$ such that for every $z$, every $N$ and every $k<\frN$,
\[
P\left(
\omega:\quenchedE^z_{\omega^{(N)}}\left[(T_1\wedge \frac{N}{2})^2\right]>k
\right)
\leq e^{-Ck^{\frac 16}}.
\]
\end{lemma}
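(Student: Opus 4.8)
The plan is to mimic the proof of Lemma \ref{lem:quen_tail_est}, but with the truncation at $N/2$ built into the random variable from the start so that the second moment is controlled by the tail bound of Lemma \ref{lem:ntail_est}. First I would set $S=S(\omega,z,N)=T_1\wedge\frac N2$, so that $S$ is a bounded random variable taking values in $\{0,1,\ldots,\lceil N/2\rceil\}$, and write its conditional second moment as a sum over the tail:
\[
\quenchedE^z_{\omega^{(N)}}\left[S^2\right]=\sum_{j\ge 0}(2j+1)\,\quenchedP^z_{\omega^{(N)}}\!\left(S>j\right)\le \sum_{j=0}^{\lceil N/2\rceil}(2j+1)\,\quenchedP^z_{\omega^{(N)}}\!\left(T_1>j\right).
\]
Since $S>j$ forces $j<N/2$, every term that appears has $j<N/2\le 2N$, so Lemma \ref{lem:ntail_est} applies to each of them.

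Next I would run the same first-moment-of-the-tail trick as in Lemma \ref{lem:quen_tail_est}. If $\quenchedE^z_{\omega^{(N)}}[S^2]>k$, then, splitting the sum above at $j\approx \sqrt k$ (roughly: the contribution of terms with $2j+1\le \sqrt k$ is at most $\sum_{2j+1\le\sqrt k}(2j+1)\le k/2$ using $\quenchedP\le 1$), the remaining tail must itself be large:
\[
B(\omega):=\sum_{j\ge c\sqrt k}(2j+1)\,\quenchedP^z_{\omega^{(N)}}\!\left(T_1>j\right)>k/2
\]
for a suitable absolute constant $c>0$. Taking $P$-expectation and using Lemma \ref{lem:ntail_est},
\[
E[B(\omega)]=\sum_{j\ge c\sqrt k}(2j+1)\int_\Omega\quenchedP^z_{\omega^{(N)}}(T_1>j)\,dP(\omega)\le\sum_{j\ge c\sqrt k}(2j+1)\,e^{-Cj^{1/3}},
\]
and this sum, being the tail of a convergent series with a stretched-exponential summand dominating the polynomial factor, is bounded by $e^{-C'k^{1/6}}$ for a new constant $C'>0$ (the exponent $k^{1/6}$ arises because $j\ge c\sqrt k$ gives $j^{1/3}\gtrsim k^{1/6}$, and the polynomial prefactors $j$ and the number of terms only cost sub-exponential corrections that can be absorbed by shrinking the constant). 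Markov's inequality then gives
\[
P\!\left(\omega:\quenchedE^z_{\omega^{(N)}}[S^2]>k\right)\le P\!\left(B(\omega)>k/2\right)\le \frac{2E[B(\omega)]}{k}\le e^{-C''k^{1/6}}
\]
for all $k$, after again adjusting the constant to absorb the $1/k$ factor.

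There is no real obstacle here beyond bookkeeping; the one point requiring a little care is the passage from the stretched-exponential bound $\sum_{j\ge c\sqrt k}(2j+1)e^{-Cj^{1/3}}$ to a clean $e^{-C''k^{1/6}}$ form and the subsequent absorption of polynomial factors into the constant — this is routine but is where the exponent drops from $\frac13$ (in Lemma \ref{lem:quen_tail_est}) to $\frac16$ (here), precisely because squaring $T_1$ effectively squares the scale at which the tail is measured. One should also note that the cutoff at $N/2$ (rather than $2N$) is what makes every invocation of Lemma \ref{lem:ntail_est} legitimate, since that lemma requires $k<2N$; this is why the statement truncates $T_1$ at $N/2$ rather than leaving it unbounded.
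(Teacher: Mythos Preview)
Your proof is correct and follows essentially the same route as the paper, which simply says that Lemma~\ref{lem:quen_ntail_est} ``follows from Lemma~\ref{lem:ntail_est} using the exact same calculation that yields Lemma~\ref{lem:quen_tail_est} from Lemma~\ref{lem:tail_est},'' with the drop from exponent $1/3$ to $1/6$ coming from the square inside the expectation. Your write-up makes this explicit: the identity $\quenchedE[S^2]=\sum_{j\ge 0}(2j+1)\quenchedP(S>j)$ forces the split at $j\asymp\sqrt{k}$ rather than $j\asymp k$, and the rest is the Markov-inequality step from Lemma~\ref{lem:quen_tail_est} verbatim.
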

\begin{proof}[Proof of Lemma \ref{lem:ntail_est}]
The proof of Lemma \ref{lem:ntail_est} is basically the same as that of Lemma \ref{lem:tail_est}, 
except that we need to handle the fact that the environment is not i.i.d. and not even locally i.i.d. (consider, for example, any neighborhood of the point 0). As in the proof of Lemma \ref{lem:tail_est}, let $W_n=\sum_{i=1}^dX_n^{(i)}$. It is enough to show that, for two appropriate constants $C_1$ and $C_2$, the probability that  the reflected walk $f(X_k)$ (see display \eqref{eq:deff} for the definition) visits less than $C_1k^{1/3}$ points up to time $k$ is bounded by $e^{-C_2k^{1/3}}$.

To this end we consider separately the coordinates for which the point $z$ is closer than $\frac{k^{1/3}}d$ to the boundary of $\Delta_N$ and those for which the point $z$ is further than $\frac{k^{1/3}}d$ from the boundary. Without loss of generality, assume that $0\leq z^{(i)}<\frac{k^{1/3}}d$ for $1\leq i\leq \ell$, and that $\frac{k^{1/3}}d\leq z^{(i)}\leq N-\frac{k^{1/3}}d$ for $\ell<i\leq d$.

let 
$Z_n = W_n-W_0$ be the change in $(W_\cdot)$.
With probability greater than $1-\exp(-C_2k^{1/3})$, we get that
\[
\max_{n\leq k}|Z_n|>3k^{1/3}.
\]
Therefore, there exists a coordinate $i$ such that 
\[
\max_{n\leq k}|X_n^{(i)}-z^{(i)}|>\frac{3k^{1/3}}d.
\]
Now, if $\ell<i\leq d$, then the first $\frac{k^{1/3}}d$ times that $|X_n^{(i)}-z^{(i)}|$ reaches a new maximum, $f(X_n)$ visits a new point. If $1\leq i\leq \ell$, then whenever $|X_n^{(i)}-z^{(i)}|$ reaches $\frac{2k^{1/3}}d+1,\frac{2k^{1/3}}d+2,\ldots,\frac{3k^{1/3}}d$, the process $f(X_n)$ visits a new point.

\end{proof}

Lemma \ref{lem:quen_ntail_est} follows from Lemma \ref{lem:ntail_est} using the exact same calculation that yields Lemma \ref{lem:quen_tail_est} from Lemma \ref{lem:tail_est}. The different power ($1/6$ instead of $1/3$) stems from the power $2$ inside the expectation.

\subsection{Empirical distribution of $\quenchedE_\omega^z(T_1\wedge\frac N2)$}
For a number $N$ and an environment $\omega$, we denote $T=T(\omega,N):=\sqrt{E^0_\omega\big(\min(T_1,\frac N2)^2\big)}$
and for $z\in\Z^d$ we denote $T^z=T^z(\omega,N):=\sqrt{E^z_\omega\big(\min(T_1,\frac N2)^2\big)}$.

\begin{lemma}\label{lem:emp_dist} Fix $1\leq p<\infty$.
$P$-almost surely, for all $N$ large enough, for all $k\leq(\log\log N)^{100}$,
\begin{equation}\label{eq:emp_dist}
E_{\omega,N}(T^p\ ;\ T>k) \leq e^{-Ck^{1/3}}
\end{equation}
where $E(X\ ;\ A)$ is defined to be $E(X\cdot{\bf 1}_A)$.
\end{lemma}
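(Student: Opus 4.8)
\textbf{Proof strategy for Lemma \ref{lem:emp_dist}.}
The plan is to deduce this quenched, $P_{\omega,N}$-averaged estimate from the annealed tail bound of Lemma \ref{lem:ntail_est} together with a Borel--Cantelli argument, exactly mirroring how Lemma \ref{lem:quen_tail_est} was obtained from Lemma \ref{lem:tail_est}, but now keeping track of the extra spatial average over the $(2N)^d$ shifts of $\omega^{(N)}$. First I would fix $p$ and a large integer $k$ and unpack the left-hand side: $E_{\omega,N}(T^p\,;\,T>k)$ is the average over all $(2N)^d$ shifts $z$ of $(T^z(\omega,N))^p\,{\bf 1}_{T^z(\omega,N)>k}$, where $T^z = \sqrt{E^z_{\omega^{(N)}}[(T_1\wedge\tfrac N2)^2]}$. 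The point is that for each fixed shift, Lemma \ref{lem:quen_ntail_est} gives $P\big(T^z > k\big)\le e^{-Ck^{1/3}}$ (its statement is phrased with $(T_1\wedge\frac N2)^2$ inside and threshold $k$, which is the square of what we want, so the exponent becomes $k^{1/3}$ rather than $k^{1/6}$ — precisely the power stated).

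The second step is to turn the pointwise-in-$z$ statement into a statement about the average. I would split $E_{\omega,N}(T^p;T>k)$ dyadically in the size of $T^z$: write it as $\sum_{j\ge 0} E_{\omega,N}\big(T^p\,;\, 2^j k < T \le 2^{j+1}k\big) \le \sum_{j\ge 0} (2^{j+1}k)^p\, P_{\omega,N}\big(T > 2^j k\big)$, where $P_{\omega,N}(T>m)$ is the fraction of shifts $z$ with $T^z(\omega,N)>m$. So it suffices to control, for $P$-a.e.\ $\omega$ and all large $N$, the fraction of bad shifts $P_{\omega,N}(T>m)$ for $m$ ranging over $k, 2k, 4k,\dots$ with $k\le(\log\log N)^{100}$. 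By Markov's inequality applied to the $P$-expectation of this fraction, $E\big[P_{\omega,N}(T>m)\big] = \frac1{(2N)^d}\sum_{z\in[0,2N)^d} P\big(T^z(\omega,N)>m\big) \le e^{-Cm^{1/3}}$ by (the square-rooted form of) Lemma \ref{lem:quen_ntail_est}, uniformly in $N$ and in $m<N$ — note every shift $z$ of $\omega^{(N)}$ is itself a periodized i.i.d.\ environment, so the bound applies to each summand with the same constant. Hence $E\big[P_{\omega,N}(T>m)\big]\le e^{-Cm^{1/3}}$, so for $m\ge$ some threshold we get $P\big(P_{\omega,N}(T>m) > e^{-\frac C2 m^{1/3}}\big) \le e^{-\frac C2 m^{1/3}}$.

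The third step is the Borel--Cantelli summation. Taking $m$ of the form $(\log\log N)^{100}$ — more precisely, running $N$ over a sufficiently sparse deterministic subsequence and taking $m = (\log\log N)^{100}$, then filling in intermediate $N$ by monotonicity/continuity of the relevant quantities in $N$ — the probabilities $e^{-\frac C2 m^{1/3}} = e^{-\frac C2 (\log\log N)^{100/3}}$ are summable along that subsequence (since $(\log\log N)^{100/3}\to\infty$ faster than $\log N \cdot (\text{const})^{-1}$ along any geometric subsequence one can arrange summability, using that $100/3 > 1$ gives super-polynomial-in-$\log$ decay). Thus $P$-a.s., for all large $N$ and all $k\le(\log\log N)^{100}$, we have $P_{\omega,N}(T>2^jk)\le e^{-\frac C2 (2^jk)^{1/3}}$ for every $j\ge0$; plugging into the dyadic sum from step two and carrying out the geometric-type summation $\sum_j (2^{j+1}k)^p e^{-\frac C2(2^jk)^{1/3}}$, which is dominated by its first term up to a constant factor, yields $E_{\omega,N}(T^p;T>k)\le C' k^p e^{-\frac C2 k^{1/3}}\le e^{-C'' k^{1/3}}$ after adjusting constants (absorbing the polynomial $k^p$ into the stretched exponential, valid for $k$ large, and noting the statement is only asserted for all large $N$ hence all large $k$ in the relevant range, or trivially true for bounded $k$ by enlarging the constant).

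\textbf{Main obstacle.} The delicate point is the interplay between the three scales: the spatial average is over $(2N)^d$ shifts, so the naive union bound over shifts costs a factor $N^d$, which would destroy a stretched-exponential-in-$(\log\log N)$ decay. The resolution — and the step I expect to require the most care — is to \emph{not} union-bound over shifts but instead use Markov on the $P$-expectation of the \emph{fraction} of bad shifts, exploiting linearity of expectation so that the $N^d$ cancels against $\frac1{(2N)^d}$; one then only pays the Borel--Cantelli cost of a union bound over (a sparse set of) values of $N$, which is cheap because $e^{-c(\log\log N)^{100/3}}$ is summable along a geometric subsequence. Making the "fill in intermediate $N$" and "reduce to a sparse subsequence" steps rigorous — ensuring monotonicity or a crude Lipschitz-in-$N$ bound on $E_{\omega,N}(T^p;T>k)$ so that control on the subsequence implies control for all large $N$ — is the routine-but-fiddly part that the proof must address.
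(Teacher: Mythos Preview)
There is a genuine gap in the Borel--Cantelli step. Your Markov bound
\[
P\big(P_{\omega,N}(T>m)>e^{-\frac C2 m^{1/3}}\big)\le e^{-\frac C2 m^{1/3}}
\]
is correct, but the right-hand side carries no decay in $N$. The lemma requires the estimate for \emph{all} $k\le(\log\log N)^{100}$, so in particular for each fixed value of $k$ (and hence fixed $m=k$) uniformly in all large $N$; for such $m$ your bad-event probability is a constant, and summing it over any infinite set of $N$'s diverges. Passing to a geometric subsequence only helps when $m$ grows with $N$, which covers $m=(\log\log N)^{100}$ but not $m$ fixed. The proposed fix --- ``fill in intermediate $N$ by monotonicity'' --- does not work: $P_{\omega,N}$ is the uniform law on the shifts of the \emph{reflected $2N$-periodization} $\omega^{(N)}$, and there is no monotone or Lipschitz relation between $P_{\omega,N}(T>m)$ for different $N$. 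Finally, the escape clause ``trivially true for bounded $k$ by enlarging the constant'' presupposes a uniform bound on $E_{\omega,N}(T^p)$ for all large $N$, which you have not established and which is itself a nontrivial ingredient.

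The paper supplies exactly the missing idea: it first proves $\sup_N E_{\omega,N}(T^{2p})<\infty$ $P$-a.s.\ (via the ergodic theorem on the interior of $\Delta_{2N}$ plus a boundary estimate), then by Cauchy--Schwarz reduces to controlling $P_{\omega,N}(T>k)$. For the latter it truncates $T^z$ to a quantity $B^{1,\ell}_\omega(z)$ depending only on $\omega$ in an $\ell$-neighborhood of $z$ (with $\ell=[(\log N)^{60}]$), so that $B^{1,\ell}_\omega(z)$ and $B^{1,\ell}_\omega(w)$ are \emph{independent} whenever $\|z-w\|>\ell$. This i.i.d.\ structure yields a variance bound $\var(T_{N,k})\le N^{-3d/4}f(k)$, and Chebyshev then gives $P(T_{N,k}>2f(k))\le N^{-3d/4}/f(k)$ --- polynomial decay in $N$, which is what makes the union bound over $k$ and the Borel--Cantelli over $N$ go through (using $d\ge2$). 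A first-moment Markov argument cannot see this independence and therefore cannot produce the needed $N$-decay.
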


\begin{proof} 

First, we show that there exists $c$ such that $P$-almost surely for all $N$ large enough,

\begin{equation}\label{eq:fixed_part}
E_{\omega,N}(T^{2p}) <c.
\end{equation}

Indeed, the LHS of \eqref{eq:fixed_part} equals
\begin{equation}\label{eq:kfp}
\frac{1}{|\Delta_{2N}|}\sum_{z\in\Delta_{2N}}\left[T^z(\omega^{(N)},N)\right]^{2p}
=\frac{1}{|\Delta_{2N}|}\sum_{z\in\Delta_{2N}}\left[\quenchedE_{\omega^{(N)}}^z\big(\min(T_1,\frac N2)^2\big)\right]^{p}
\end{equation}
%
%
%
Let $D_N=\{z\in\Delta_{2N}:\dist(z,\partial\Delta_{2N}>N^{0.7})\}$. Then for all $z\in D_N$, the probability that the random walk starting at $z$ reaches the boundary of $\Delta_{2N}$ before time $\frac N2$ decays like $e^{-cN^{0.4}}$.
%
%
%
 Therefore, for every $z\in D_N$, we get that 
\[
\quenchedE_{\omega^{(N)}}^z\left[\min(T_1,N/2)^2\right]
\leq \quenchedE_{\omega}^z\left[\min(T_1,N/2)^2\right] + N^2e^{-N^{0.4}}
\leq \quenchedE_{\omega}^zT_1^2 + N^2e^{-N^{0.4}}
\]

and by applying Lemma \ref{lem:for7} and the ergodic Theorem to the i.i.d. environment $\omega$ we get that a.s.
\begin{equation}\label{eq:inside}
\sup\left\{
\frac{1}{|\Delta_{2N}|}\sum_{z\in D_{N}}\left[\quenchedE_{\omega^{(N)}}^z\big(\min(T_1,\frac N2)^2\big)\right]^{p} \ : \ 
N\in\N\right\} < \infty.
\end{equation}

We thus need to bound
\[
\frac{1}{|\Delta_{2N}|}\sum_{z\in\Delta_{2N}\setminus D_N}\left[\quenchedE_{\omega^{(N)}}^z\big(\min(T_1,\frac N2)^2\big)\right]^{p}.
\]
To this end, we use the fact that 
$
|\Delta_{2N}\setminus D_N| / |\Delta_{2N}|<CN^{-0.3},
$
Lemma \ref{lem:quen_ntail_est} with choice of parameter $k=(\log N)^{20}$ 
and Borel-Cantelli.




Now that \eqref{eq:fixed_part} has been established, by Cauchy-Schwarz, all we need to show is that 
$P$-almost surely, for all $N$ large enough, for all $k\leq(\log\log N)^{100}$,

\begin{equation}\label{eq:k_part}
P_{\omega,N}(T>k) \leq e^{-Ck^{1/3}}.
\end{equation}

Note that 
\[
P_{\omega,N}(T>k) = \frac{1}{|\Delta_{2N}|}\sum_{z\in \Delta_{2N}}{\bf 1}_{\{T^z({\omega^{(N)}})>k\}}.
\]

To prove \eqref{eq:k_part}, we need a second moment estimate.
Let $\ell$ be an integer number, whose value will be determined later. Then
\begin{equation}\label{eq:decoup}
T^{z}(\omega)=\sum_{h=1}^{N/2} h\quenchedP_\omega^z(T_1=h)
=\sum_{h=1}^{\ell-1} h\quenchedP_\omega^z(T_1=h) + \sum_{h=\ell}^{N/2} h\quenchedP_\omega^z(T_1=h)
\end{equation}

Write 
\[
B^{\ell,N/2}_\omega(z)=\sum_{h=\ell}^{N/2} h\quenchedP_\omega^z(T_1=h)
\]
and
\[
B^{1,\ell}_\omega(z)=\sum_{h=1}^{\ell-1} h\quenchedP_\omega^z(T_1=h).
\]

By Lemma \ref{lem:ntail_est},
\begin{eqnarray}\label{eq:bigell}
\nonumber
E\big(B^{\ell,N/2}_\omega(z)\big)
&=&E\left(\sum_{h=\ell}^{N/2} h\quenchedP_\omega^z(T_1=h)\right)\\
&=&\sum_{h=\ell}^{N/2} h\annealedP^z(T_1=h)\leq C\ell^3e^{-\ell^{1/3}}.
\end{eqnarray}

Now set $\ell=\big[ (\log N)^{60} \big]$.
Using Markov's inequality and a union bound, from \eqref{eq:bigell} we see that
\begin{eqnarray*}
P\left(
\exists_{z\in \Delta_{2N}}B^{\ell,{N/2}}_\omega(z)>1
\right)
C\leq |\Delta_{2N}|\ell^3e^{-\ell^{1/3}}\leq C e^{-(\log N)^{18}}
\end{eqnarray*}
and by Borel-Cantelli, with probability 1, $B^{\ell,N/2}_\omega(z)\leq 1$ for all $N$ large enough and every $z\in \Delta_{2N}$.

Therefore, it is sufficient to show that almost surely for all large enough $N$ and all $k\leq(\log\log N)^{100}$,
\begin{equation}\label{eq:total_sum}
T_{N,k}:=\frac{1}{|\Delta_{2N}|}\left[\sum_{z\in \Delta_{2N}}
{\bf 1}_{\{B^{1,\ell}_{\omega^{(N)}}(z)>k-1\}}\right]\leq e^{-Ck^{1/3}}.
\end{equation}

From Lemma \ref{lem:quen_ntail_est}, for every $z\in \Delta_{2N}$,
\begin{equation}\label{eq:1st_mom}
P(B_{\omega^{(N)}}^{1,\ell}(z)>k-1)\leq
P(T^z(\omega^{(N)})>k-1)\leq e^{-Ck^{1/3}}:=f(k).
\end{equation}

\ignore{
#######################
We now state a claim regarding the dependence of $T^z$ and $T^w$ when $z$ and $w$ are far away from each other.
\begin{claim}\label{claim:decouple}
Whenever $\|z-w\|>(\log N)^{60}$,
\begin{equation}\label{eq:2nd_mom}
P(T^z(\omega^{(N)})>k; T^w(\omega^{(N)})>k)\leq e^{-C(\log N)^2}+P(T^z(\omega^{(N)})>k)^2.
\end{equation}
\end{claim}

The proof of Claim \ref{claim:decouple} is postponed to the end of this subsection.
##########################
}

Clearly, for every $z$ and $w$, 
\begin{eqnarray}\label{eq:zwclose}
\nonumber
&&P\left(B_{\omega^{(N)}}^{1,\ell}(z)>k-1\ ;\ P(B_{\omega^{(N)}}^{1,\ell}(w)>k-1\right)\\
&\leq& P\big(B_{\omega^{(N)}}^{1,\ell}(z)>k-1\big)\leq f(k).
\end{eqnarray}

If, in addition, $||z-w||>\ell$, then by the i.i.d. nature of $P$ we get that
\begin{eqnarray}\label{eq:zwfar}
\nonumber
&&P\left(B_{\omega^{(N)}}^{1,\ell}(z)>k-1\ ;\ B_{\omega^{(N)}}^{1,\ell}(w)>k-1\right)\\
&=& P\big(B_{\omega^{(N)}}^{1,\ell}(z)>k-1\big)\cdot P\big(B_{\omega^{(N)}}^{1,\ell}(w)>k-1\big).
\end{eqnarray}

Therefore, for $N$ large enough,

\begin{eqnarray*}
\var(T_{N,k})\leq
\frac{1}{(2N)^{2d}}\left[
\ell^d f(k) (2N)^d 
\right]
\leq N^{-3d/4}f(k).
\end{eqnarray*}

Thus by Chebichev's inequality,
\[
P(T_{N,k}>2f(k))\leq N^{-3d/4}/f(k),
\]
and a union bound says that
\begin{eqnarray*}
&&
P\left(
\exists_{k\leq(\log\log N)^{100}}\ :\ T_{N,k}>2f(k)
\right)
\\
&\leq&
N^{-3d/4}\cdot (\log\log N)^{100}\cdot e^{C(\log\log N)^{100/3}}\\
&\leq& N^{-3d/5}.
\end{eqnarray*}

Remembering that $d\geq 2$, Borel-Cantelli now finishes the proof.

\end{proof}
\ignore{
###########################################
\begin{proof}[Proof of Claim \ref{claim:decouple}]
Fix $\ell=(\log N)^{60}$. By the definition $T^z(\omega)$, for a point $z$ an environment $\omega$,
\begin{equation}\label{eq:decoup}
T^{z}(\omega)=\sum_{k=1}^\infty k\quenchedP_\omega^z(T_1=k)
=\sum_{k=1}^{\ell-1} k\quenchedP_\omega^z(T_1=k) + \sum_{k=\ell}^\infty k\quenchedP_\omega^z(T_1=k)
\end{equation}
By Lemma \ref{lem:tail_est},
\begin{eqnarray}\label{eq:bigell}
\nonumber
E\left(\sum_{k=\ell}^\infty k\quenchedP_\omega^z(T_1=k)\right)
&=&\sum_{k=\ell}^\infty k\annealedP^z(T_1=k)\leq C\ell^3e^{-\ell^{1/3}}\\
&=&C(\log N)^{180}e^{-(\log N)^{\frac {60}3}}.
\end{eqnarray}

Write 
\[
B_\omega(z)=\sum_{k=\ell}^\infty k\quenchedP_\omega^z(T_1=k).
\]
Using Markov's inequality and a union bound, from \eqref{eq:bigell} we see that
\begin{eqnarray*}
P\left(
\exists_{z\in \Delta_N}B_\omega(z)>
\right)
\end{eqnarray*}

\end{proof}
##########################################
}

\subsection{Stationary measure}
Let $P_N$ be the uniform distribution on $\Delta_N$.
Let $\stat_N=\stat_N(\omega)$ be a stationary measure for the Markov process $\{f(Y_n)\}_{n=1}^\infty$
on $\Delta_N$ where $\{Y_n\}_{n=1}^\infty$ is the rescaled walk on $\Z^d$ under the environment $\omega^{(N)}$ and $f$ is as in \eqref{eq:deff}
(note that due to the non irreducibility of the Markov chain, there may be more than one stationary measure. In this case, $H_N$ is arbitrarily chosen among the stationary measures. Also note that by Lemma \ref{lem:quen_ntail_est}, $P$-almost surely for all large enough $N$, the process $\{Y_n\}_{n=0}^\infty$ is well defined), and let $\Phi_N=\Phi_N(\omega)=\frac{d\stat_N}{dP_N}$ be the Radon-Nikodym derivative of $\stat_N$. The main purpose of this section is the following lemma, whose proof will be completed in the next subsection.

\begin{lemma}\label{lem:lprn}
Fix $p=\frac{d}{d-1}$.
There exists a constant $C_{\ref{eq:lpbound}}$ such that for almost every $\omega$, we have that
\begin{equation}\label{eq:lpbound}
\limsup\left\{\|\Phi_N\|_{\Delta_N,p}\ :\ N=1,2,\ldots\right\}\leq C_{\ref{eq:lpbound}}.
\end{equation}
\end{lemma}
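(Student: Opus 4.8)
\textbf{Proof plan for Lemma \ref{lem:lprn}.}

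The plan is to combine the maximum principle (Theorem \ref{thm:max_princ}) with a duality argument in the spirit of Theorem~5 of \cite{GZ}. The Radon--Nikodym derivative $\Phi_N$ is stationary for the rescaled reflected walk, so for any function $g$ on $\Delta_N$ one has $\sum_z \Phi_N(z)\, (L^{(N)}_{\omega^{(N)}} g)(z) = 0$, where $L^{(N)}_{\omega^{(N)}} g(z) = g(z) - \quenchedE^z_{\omega^{(N)}}[g(X_{T_1^{(N)}})]$ with the truncation level $k = (\log N)^{100}$. Writing $q = d$ for the conjugate exponent of $p = d/(d-1)$, I want to bound $\|\Phi_N\|_{\Delta_N,p}$ by testing against a suitable $g$. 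First I would fix a nonnegative $\psi$ on $\Delta_N$ with $\|\psi\|_{\Delta_N,q}\le 1$ and solve (approximately, on the periodized torus) the Poisson-type equation $L^{(N)}_{\omega^{(N)}} g = \psi - \bar\psi$, where $\bar\psi$ is the average of $\psi$ against the stationary measure; the point is that this can be done since the rescaled walk on the $2N$-periodized environment is a finite irreducible (on its recurrent class) Markov chain, and Remark~\ref{rem:balancedmart} guarantees it is balanced, so the maximum principle applies to $g$.

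The key step is then to apply Theorem~\ref{thm:max_princ} (in the form \eqref{eq:max_princ}) to $g$, or rather to $\pm g$, on the cube $\Delta_N$: the hypothesis \eqref{eq:condformax} holds for the periodized environment by Lemma~\ref{lem:quen_ntail_est} together with Remark~\ref{rem:log100}, $P$-a.s. for all large $N$. This yields an oscillation bound
\[
\operatorname{osc}_{\Delta_N} g \;\le\; C N^2 \,\bigl\| (L^{(N)}_{\omega^{(N)}} g)^+ \bigr\|_{\Delta_N,d} \;=\; C N^2\, \bigl\| (\psi-\bar\psi)^+\bigr\|_{\Delta_N,d} \;\le\; C N^2,
\]
using reflection symmetry of the periodized environment to control the boundary term $\max_{\partial^{(k)}\Delta_N} g$ by $\max_{\Delta_N} g$ (this is precisely why the \emph{reflected} periodization was introduced). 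Next, pairing $\Phi_N$ with $\psi$:
\[
\frac{1}{|\Delta_N|}\sum_{z\in\Delta_N}\Phi_N(z)\bigl(\psi(z)-\bar\psi\bigr)
= \frac{1}{|\Delta_N|}\sum_z \Phi_N(z) L^{(N)}_{\omega^{(N)}} g(z) = 0,
\]
so $\frac{1}{|\Delta_N|}\sum_z \Phi_N(z)\psi(z) = \bar\psi$. But this just gives $\bar\psi$ again; the real mechanism must instead use the stationarity differently — one pairs the \emph{increment} of $g$ along a step with $\Phi_N$ and invokes the balanced (martingale) property to turn the first-order term into a second-order (Laplacian) term, exactly as in \cite{GZ}. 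Concretely, I would follow \cite{GZ} and bound $\frac{1}{|\Delta_N|}\sum_z \Phi_N(z)\psi(z)$ by a constant times $\|\nabla g\|_\infty$ times a quantity controlled by the oscillation bound above and the estimate $E_{\omega,N}(T^p; T>k)\le e^{-Ck^{1/3}}$ from Lemma~\ref{lem:emp_dist}, which controls the effective step-length of the rescaled walk uniformly in $N$. Taking the supremum over $\psi$ with $\|\psi\|_{\Delta_N,q}\le 1$ then gives $\|\Phi_N\|_{\Delta_N,p}\le C$.

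The main obstacle, and the place where the argument genuinely departs from \cite{GZ}, is that the rescaled walk has unbounded (though light-tailed) steps, so the "Laplacian" $L^{(N)}_{\omega^{(N)}}$ is a nonlocal operator and the discrete integration-by-parts that converts the test against $\Phi_N$ into a bound in terms of $\|(L^{(N)}_{\omega^{(N)}} g)^+\|_{\Delta_N,d}$ must be done carefully: one needs the step-length moment bound of Lemma~\ref{lem:emp_dist} (with $p = d/(d-1)$, hence the choice of this exponent) to ensure the contributions of long jumps are summable, and one needs the reflected-periodic structure so that $g$ extended to $\partial^{(k)}\Delta_N$ does not create spurious oscillation. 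I expect the bookkeeping of these tail contributions, and verifying that the error terms coming from truncation at level $k=(\log N)^{100}$ and from $T_1 > k$ are negligible uniformly in large $N$, to be the technical heart of the proof; everything else is a transcription of the $\cite{GZ}$ argument with the rescaled walk in place of the original one.
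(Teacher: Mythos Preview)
Your proposal has a genuine gap, and you in fact put your finger on it: the Poisson-equation-plus-duality approach is circular. Solving $L^{(N)}g=\psi-\bar\psi$ on the periodized box and using stationarity of $\Phi_N$ gives back exactly $\bar\psi=\frac{1}{|\Delta_N|}\sum\Phi_N\psi$, which is what you wanted to bound. The fix you then gesture at (``follow \cite{GZ}'' and pick up a $\|\nabla g\|_\infty$ factor together with Lemma~\ref{lem:emp_dist}) does not close, because the step-length of the rescaled walk is itself $\Phi_N$-weighted: the relevant quantity is $O_N^2=E_{H_N}[(T_1\wedge N/2)^2]$, and you cannot bound this without already controlling $\Phi_N$. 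Also, your claim that reflection symmetry lets you ``control the boundary term $\max_{\partial^{(k)}\Delta_N}g$ by $\max_{\Delta_N}g$'' is not how the maximum principle is used here; on the torus there is no absorbing boundary, and the way Theorem~\ref{thm:max_princ} enters is through the Green's function of the walk \emph{killed on exiting a translate of $\Delta_{2N}$}, which does vanish on its boundary.

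What the paper does instead is a bootstrap. Define $O_N=\sqrt{E_{H_N}[(T_1\wedge N/2)^2]}$. Using a resolvent expansion $\sum\Phi_N h=\tfrac{O_N}{N^2}\sum_z\Phi_N(z)\sum_{j\ge 0}(1-\tfrac{O_N}{N^2})^jE^z h(Y_j)$ and blocking the sum at the exit times $S_m$ from successive $2N$-boxes, the maximum principle applied to the killed Green's function (Claim~\ref{claim:from_max}) yields $\|\Phi_N\|_{p}\le C_1 O_N$. Conversely, H\"older together with Lemma~\ref{lem:emp_dist} gives $O_N\le C_2(\log\|\Phi_N\|_p)^4$. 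Feeding in the a~priori bound $O_N\le(\log N)^4$ (Claim~\ref{claim:ap_bound}) and iterating, $O_N$ and hence $\|\Phi_N\|_p$ stabilize at a constant. The two-sided dependence between $O_N$ and $\|\Phi_N\|_p$ is the essential mechanism you are missing; Lemma~\ref{lem:emp_dist} is not a standalone step-length bound but one leg of this bootstrap.
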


We begin with three definitions and a basic lemma, which will serve as the input for the main step.
\begin{definition}\label{def:avestep}
The {\em average step size at scale $N$}, denoted by $O_N=O_N(\omega)$ is defined to be
\begin{equation}\label{eq:avestep}
O_N:=\sqrt{E_{\stat_N}\big[(T_1\wedge N/2)^2\big]}
=\left(
\frac{1}{|\Delta_N|}\sum_{z\in \Delta_N}\Phi_N(z)\quenchedE_{\omega^{(N)}}^z\big[(T_1\wedge N/2)^2\big]
\right)^{1/2}.
\end{equation}
\end{definition}

At this point we remind the reader that $\{X_n\}$ denotes the original walk, while $\{Y_n\}$ denotes the rescaled walk. As in \cite{GZ} we define the following stopping times.
\begin{definition}\label{def:sttime_Y}
We define $S_1=S_1(N):=\inf\{n:\|Y_n-Y_0\|_\infty\geq 2N\}$ and recursively
$S_{k+1}=S_{k+1}(N):=\inf\{n>S_k:\|Y_n-Y_{S_k}\|_\infty\geq 2N\}$.
If $S_k$ is not well defined (either because the rescaled walk is not well defined or
because the walk never leaves the neighborhood of $S_{k-1}$), we set $S_k$ to infinity, as well as $S_j, j>k$.
\end{definition}
%
%
%

\ignore
{\red We need $2N$ in defs \ref{def:sttime_Y} and \ref{def:sttime_X} because the Guo-Zeitouni argument that we apply requires periodicity, and our periodicity is with a period of $2N$. - Noam}

We also define corresponding stopping times for the original walk $\{X_n\}$.

\begin{definition}\label{def:sttime_X}
We define $\Gamma_k=\Gamma_k(N)=T_{S_k}$, i.e. the time when $S_k$ occurs in the clock of the original walk.
\end{definition}

From the fact that $\{X_n\}$ is a martingale whose step size is one, we get the following simple estimate.

\begin{lemma}\label{lem:gamma_big}
There exists a constant $C$ such that for every $N$ and almost every $\omega$,
\begin{equation}\label{eq:gamma_big}
\sum_{k=1}^\infty \quenchedP_{\omega^{(N)}}(\Gamma_k<CkN^2)<\infty.
\end{equation}
Furthermore,
\begin{equation}\label{eq:gamma_uniq}
\esssup\left\{
\sum_{k=1}^\infty \quenchedP_{\omega^{(N)}}(\Gamma_k<CkN^2)
\right\}<\infty,
\end{equation}
where the essential supremum is taken w.r.t. the measure $P$ on $\omega$.
\end{lemma}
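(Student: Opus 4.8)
The plan is to control $\Gamma_k = T_{S_k}$ from below using two independent layers of martingale estimates: one for the rescaled walk $\{Y_n\}$ telling us that $S_k$ must be large, and one relating $T_{S_k}$ to $S_k$ through the individual increments $T_{j+1}-T_j$. First I would observe that since $\{X_n\}$ is a martingale with unit steps, the coordinate process $W_n = \sum_i X_n^{(i)}$ is a genuine (lazy-free) martingale, and more usefully each $X_n^{(i)}$ is a martingale with bounded increments; for the reflected periodized walk on $\Delta_N$, reaching $\|Y_n - Y_0\|_\infty \ge 2N$ requires the unreflected walk to have traveled a net displacement of order $N$ in some coordinate. By the $L^2$-maximal inequality (Doob) applied to the martingale $X^{(i)}_{T_m} - X^{(i)}_0$, whose increments $Y^{(i)}_{m+1}-Y^{(i)}_m$ are bounded by $T_{m+1}-T_m$ in absolute value and which by Remark \ref{rem:balancedmart} are centered, one gets that $S_1 \ge c N^2 / (\text{typical increment}^2)$ with overwhelming probability. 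More precisely, the event $\{S_1 \le t\}$ forces $\max_{m\le t}\|Y_m - Y_0\|_\infty \ge 2N$, and since $\E[(Y^{(i)}_m - Y^{(i)}_0)^2] = \sum_{j<m}\E[(Y^{(i)}_{j+1}-Y^{(i)}_j)^2] \le \sum_{j<m}\E[(T_{j+1}-T_j)^2]$, Chebyshev plus Doob bounds $\quenchedP_{\omega^{(N)}}(S_1 \le t)$ by something like $C t \,\overline{O}^2/N^2$ where $\overline{O}^2$ is a uniform bound on the conditional second moments of the rescaled increments.

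The second step is to pass from $S_k$ to $\Gamma_k = T_{S_k} = \sum_{j=0}^{S_k - 1}(T_{j+1}-T_j)$. Here the point is that $\Gamma_k \ge S_k$ trivially (each $T_{j+1}-T_j \ge 1$), but we actually want the stronger statement $\Gamma_k \ge CkN^2$. I would combine the previous paragraph applied to the translated walk after each $S_i$ — using the periodicity of $\omega^{(N)}$ so that the estimate is uniform in the starting point — to get that $S_k \ge c' k \cdot (N^2/\overline{O}^2)$ except with summable-in-$k$ probability, and then use that between consecutive $T_j$'s at least one unit of real time elapses, together with the fact that $\overline{O}$ is bounded (this is exactly where Lemma \ref{lem:quen_ntail_est} and the empirical-distribution control from Lemma \ref{lem:emp_dist} enter, guaranteeing $\E_{H_N}[(T_1\wedge N/2)^2]$, i.e. $O_N^2$, is bounded for all large $N$, $P$-a.s.). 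Actually the cleanest route is: $\Gamma_k \ge S_k$ and $S_k$ is a sum of $k$ i.i.d.-ish pieces each stochastically bounded below by $c N^2$ (after dividing by the bounded mean square increment), so a union bound over $k$ of the exponentially-small (or at least $O(kN^2)^{-2}$, by second moments) deviation probabilities gives \eqref{eq:gamma_big}; choosing $C$ small enough absorbs the constant $\overline{O}^2$.

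For the uniform version \eqref{eq:gamma_uniq}, the key is that all the estimates above are made at the annealed level or are deterministic consequences of the martingale property plus the uniform increment bound, and the only $\omega$-dependent input — the boundedness of $O_N$ and of the conditional second moments of $T_{j+1}-T_j$ — was already shown in Lemma \ref{lem:emp_dist} to hold $P$-a.s. with a bound \emph{uniform in the shift}, since $P_{\omega,N}$ is the uniform measure over all $(2N)^d$ shifts of $\omega^{(N)}$ and Lemma \ref{lem:emp_dist} controls the $E_{\omega,N}$-average of $T^p$. So one runs the same argument but bounds the sum by a quantity depending only on the (uniformly bounded) empirical moments, not on the particular shift, and takes the essential supremum at the end. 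The main obstacle I anticipate is the non-stationarity and non-independence of the increments $T_{j+1}-T_j$ for large $j$ (flagged right after Lemma \ref{lem:tail_est}): one cannot simply treat $S_k$ as a sum of i.i.d. variables, so the honest argument should use a martingale decomposition — writing $Y^{(i)}_{S_k}$ itself as a stopped martingale and applying the optional stopping / Doob $L^2$ inequality directly at time $S_k$ rather than summing increment-by-increment — and then feed in the uniform second-moment bound on the rescaled steps from the periodized analogue of Lemma \ref{lem:for7}. That is, prove $\E_{\omega^{(N)}}[(Y^{(i)}_{S_1\wedge m})^2] \le \overline{O}^2\, \E[S_1 \wedge m]$ by the quadratic-variation identity for martingales, let $m\to\infty$, and conclude $\E[S_1] \ge (2N)^2/(d\,\overline{O}^2)$, whence the tail bounds follow with the needed uniformity.
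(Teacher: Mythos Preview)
Your approach has a genuine circularity. You rely on a uniform bound for the second moments of the rescaled increments (your $\overline{O}$, essentially $O_N$) being bounded independently of $N$, but the boundedness of $O_N$ is exactly what is established \emph{later} via the bootstrap of Lemmas \ref{lem:GZ} and \ref{lem:bndO}, and that bootstrap uses Lemma \ref{lem:gamma_big} as an input (see \eqref{eq:sm_2} in the proof of Lemma \ref{lem:GZ}). Invoking $O_N$ bounded here closes a loop. Lemma \ref{lem:emp_dist} does not rescue you: it controls the $P_{\omega,N}$-average of $T^p$, i.e., with respect to the \emph{uniform} measure on $\Delta_{2N}$, not with respect to $H_N$, and certainly not uniformly in the starting point, which is what a deterministic bound on $\overline{O}$ would require.

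The paper's proof sidesteps all of this with a single observation you missed: work directly with the original walk $\{X_n\}$, not with the rescaled walk $\{Y_n\}$. The original walk is a martingale with step size exactly one, so every estimate below is a deterministic fact valid for \emph{every} balanced $\omega$. Since $\|X_{\Gamma_{k+1}} - X_{\Gamma_k}\|\ge 2N$ by the definition of $S_{k+1}$, Doob's $L^2$ maximal inequality gives a constant $C_0$ with
\[
\quenchedP_\omega\bigl(\Gamma_{k+1}-\Gamma_k > C_0 N^2 \,\bigm|\, X_1,\dots,X_{\Gamma_k}\bigr) > \tfrac12
\]
for every $k$. Taking $C=C_0/4$, the event $\{\Gamma_k < CkN^2\}$ forces more than $3k/4$ of the gaps $\Gamma_{i+1}-\Gamma_i$, $i<k$, to be at most $C_0 N^2$; since each gap is conditionally long with probability exceeding $1/2$, this is a large-deviation event for a sum stochastically dominating $\mathrm{Bin}(k,1/2)$, and hence has probability at most $e^{-ck}$. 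Summability and the uniformity \eqref{eq:gamma_uniq} are then immediate, with no appeal to $O_N$, the distribution of $T_1$, or any $\omega$-dependent input whatsoever.
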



\begin{proof}
Note that $\Gamma_k$ is a stopping time for every $k$, and that $\|X_{\Gamma_{k+1}}-
X_{\Gamma_{k}}\|\geq 2N$. Now remember that $\{X_n\}$ is a martingale, and that the variance of its increments is 1. By Doob's inequality, there exists $C_{\ref{eq:doob}}$ such that for every balanced $\omega$ and all $k$,
\begin{equation}\label{eq:doob}
\quenchedP_\omega\left(\left.
\Gamma_{k+1}-\Gamma_k>C_{\ref{eq:doob}}N^2\ \right|
X_1,\ldots,X_{\Gamma_k}
\right)>1/2.
\end{equation}

If we now take $C=C_{\ref{eq:doob}}/4$, then by \eqref{eq:doob} and Cram\`er's Theorem we get that for every balanced $\omega$,

\begin{eqnarray}\label{eq:cramer}
\nonumber
&&\quenchedP_{\omega}(\Gamma_k<CkN^2)\\
\nonumber
&\leq&
\quenchedP_{\omega}\left(
\mbox{There exist more than $\frac{3k}{4}$ values of $n$ up to $k$ s.t. }
\Gamma_{n+1}-\Gamma_n\leq C_{\ref{eq:doob}}N^2
\right)\\
&\leq& e^{-C_{\ref{eq:cramer}}k}.
\end{eqnarray}

\eqref{eq:gamma_uniq} follows.

\end{proof}

\subsection{A bootstrap argument}
In this subsection we perform a bootstrap argument that will simultaneously control $O_N$ and prove Lemma \ref{lem:lprn}. The argument is composed of two lemmas. The first, Lemma \ref{lem:GZ},
an adaptation of Theorem 5 of \cite{GZ}, bounds $\|\Phi_N\|_{\Delta_N,p}$ in terms of $O_N$ and the second, Lemma \ref{lem:bndO}, bounds $O_N$ in terms of $\|\Phi_N\|_{\Delta_N,p}$.

We start with an a priori bound.
\begin{claim}\label{claim:ap_bound}
$P$-almost surely, $O_N\leq (\log N)^4$ for all $N$ large enough.
\end{claim}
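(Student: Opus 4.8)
The goal is an a priori bound $O_N\le(\log N)^4$ for all large $N$, holding $P$-almost surely. Recall $O_N^2=E_{\stat_N}[(T_1\wedge N/2)^2]=\frac1{|\Delta_N|}\sum_{z\in\Delta_N}\Phi_N(z)\,\quenchedE_{\omega^{(N)}}^z[(T_1\wedge N/2)^2]$, which in the notation of Subsection on empirical distributions is $\frac1{|\Delta_N|}\sum_z\Phi_N(z)\,(T^z)^2$. So $O_N^2=E_{\stat_N}[(T^\cdot)^2]$, an average of $(T^z)^2$ against the stationary density $\Phi_N$ rather than against the uniform measure. The plan is to split this sum according to whether $T^z$ is small or large, use a \emph{crude} pointwise bound on $\Phi_N$ to control the contribution of the (rare) large-$T^z$ sites, and use the trivial bound $T^z\le N/2$ everywhere as a fallback.

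\textbf{Key steps.} First I would establish a deterministic a priori upper bound on $\Phi_N$. Since $\stat_N$ is a stationary probability measure for the rescaled walk on $\Delta_N$ and the walk (by the balanced/reflected construction and Lemma~\ref{lem:quen_ntail_est}) from any site leaves every $2N$-neighborhood in a controlled time, a one-step-of-the-Markov-chain argument gives $\Phi_N(z)=\sum_w\Phi_N(w)\,\quenchedP_{\omega^{(N)}}^w(f(Y_1)=z)$; since each transition probability is at most $1$ and $\Phi_N$ integrates (against $P_N$) to $1$, we get the trivial bound $\Phi_N(z)\le|\Delta_N|=N^d$. (Any polynomial bound $\Phi_N\le N^{O(1)}$ suffices for what follows, so I would not optimize here.) Second, fix the threshold $k_0=(\log\log N)^{100}$. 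By Lemma~\ref{lem:emp_dist} (the empirical-distribution estimate) applied with, say, $p=1$, we have $P$-almost surely for all large $N$ and all $k\le k_0$ that $E_{\omega,N}(T\,;\,T>k)\le e^{-Ck^{1/3}}$, and in particular $P_{\omega,N}(T>k)\le e^{-Ck^{1/3}}$; translating from the shift-averaged environment $P_{\omega,N}$ to the explicit sum over $\Delta_{2N}$ gives $\#\{z\in\Delta_{2N}:T^z(\omega^{(N)})>k\}\le (2N)^d e^{-Ck^{1/3}}$, hence also $\#\{z\in\Delta_N:T^z>k\}\le C N^d e^{-Ck^{1/3}}$.

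\textbf{Combining.} Now split $O_N^2=\frac1{|\Delta_N|}\big(\sum_{z:T^z\le k_0}+\sum_{z:T^z>k_0}\big)\Phi_N(z)(T^z)^2$. On $\{T^z\le k_0\}$ use $(T^z)^2\le k_0^2=(\log\log N)^{200}$ together with $\sum_z\Phi_N(z)\le|\Delta_N|$, so this part is at most $(\log\log N)^{200}$. On $\{T^z>k_0\}$ use $(T^z)^2\le(N/2)^2\le N^2$, the crude bound $\Phi_N(z)\le N^d$, and the cardinality bound, giving $\le\frac1{N^d}\cdot N^d\cdot N^2\cdot C N^d e^{-Ck_0^{1/3}}=C N^{d+2}e^{-C(\log\log N)^{100/3}}$; since $(\log\log N)^{100/3}$ eventually dominates $(d+2)\log N$, this tends to $0$. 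Therefore $O_N^2\le(\log\log N)^{200}+o(1)$, which is $\le(\log N)^8$ for all large $N$, i.e. $O_N\le(\log N)^4$, as claimed.

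\textbf{Main obstacle.} The delicate point is matching the scales: we need the number of ``bad'' sites $\{T^z>k_0\}$ to be small enough that even the catastrophic bounds $\Phi_N\le N^d$ and $T^z\le N/2$ on those sites contribute negligibly. This forces $k_0$ to be at least a large power of $\log\log N$ (so that $e^{-Ck_0^{1/3}}$ beats $N^{O(1)}$), and it is exactly here that one must invoke Lemma~\ref{lem:emp_dist} in the regime $k\le(\log\log N)^{100}$ — that lemma's range of validity was engineered for precisely this step, so the argument is tight but goes through. A secondary technicality is justifying the crude $\Phi_N\le N^d$ bound given possible non-uniqueness of the stationary measure, but since $H_N$ is a fixed (if arbitrary) stationary probability measure, the mass-conservation argument above applies to it directly.
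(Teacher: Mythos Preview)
Your proof contains a fatal arithmetic error in the final step. You claim that $CN^{d+2}e^{-C(\log\log N)^{100/3}}\to 0$ because ``$(\log\log N)^{100/3}$ eventually dominates $(d+2)\log N$''. This is false: for any fixed power $\alpha$, one has $(\log\log N)^\alpha=o(\log N)$ as $N\to\infty$ (set $y=\log\log N$ and compare $y^\alpha$ with $e^y$). Hence $N^{d+2}e^{-C(\log\log N)^{100/3}}\to\infty$, not $0$, and your bound on the ``bad'' contribution blows up. The scale $k_0=(\log\log N)^{100}$ that Lemma~\ref{lem:emp_dist} permits is simply too small to kill polynomial-in-$N$ factors arising from the crude bounds $\Phi_N\le N^d$ and $T^z\le N/2$.

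The paper's approach avoids this entirely by working at a larger threshold. From Lemma~\ref{lem:ntail_est} one gets (exactly as in the proof of Lemma~\ref{lem:quen_tail_est}, or directly from Lemma~\ref{lem:quen_ntail_est}) that $P\big(\quenchedE_{\omega^{(N)}}^z[(T_1\wedge N/2)^2]>k\big)\le e^{-Ck^{1/6}}$ for every $z$. Taking $k=(\log N)^7$ and a union bound over the $|\Delta_N|=N^d$ sites gives a summable probability, so by Borel--Cantelli, $P$-a.s.\ for all large $N$ one has $\quenchedE_{\omega^{(N)}}^z[(T_1\wedge N/2)^2]\le(\log N)^7$ \emph{for every} $z\in\Delta_N$. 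Then trivially
\[
O_N^2=\frac{1}{|\Delta_N|}\sum_{z}\Phi_N(z)\,\quenchedE_{\omega^{(N)}}^z[(T_1\wedge N/2)^2]\le(\log N)^7\cdot\frac{1}{|\Delta_N|}\sum_z\Phi_N(z)=(\log N)^7,
\]
so $O_N\le(\log N)^{7/2}\le(\log N)^4$. In other words, there is no need to split into good and bad sites or to invoke Lemma~\ref{lem:emp_dist}; the uniform pointwise bound makes the bad set empty.
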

\begin{proof}
This follows from the fact that $|\Delta_N|=N^d$ and from Lemma \ref{lem:ntail_est}, the same way 
Lemma \ref{lem:quen_tail_est} is proven.
\end{proof}

\begin{lemma}\label{lem:GZ}
$P$-almost surely, there exists a constant $C_1$ such that for every $N$ large enough,
\[
\|\Phi_N\|_{\Delta_N,p}<C_1\cdot O_N,
\]
where, as before, $p=\frac{d}{d-1}$.
\end{lemma}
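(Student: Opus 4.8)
\textbf{Proof plan for Lemma \ref{lem:GZ}.}
The plan is to run the Alexandrov--Bakelman--Pucci argument of Theorem 5 of \cite{GZ}, but with the original walk replaced by the rescaled walk, so that the maximum principle of Theorem \ref{thm:max_princ} is available. The starting point is the classical duality between the stationary measure and solutions of the adjoint equation: for the reflected rescaled walk on $\Delta_N$ with kernel $\kappa_N(z,y)=\quenchedP^z_{\omega^{(N)}}(f(Y_1)=y)$, stationarity of $\stat_N$ says that for every function $g$ on $\Delta_N$,
\[
\frac{1}{|\Delta_N|}\sum_{z\in\Delta_N}\Phi_N(z)\,\bigl(g(z)-\sum_y\kappa_N(z,y)g(y)\bigr)=0.
\]
So to bound $\|\Phi_N\|_{\Delta_N,p}$ with $p=\frac{d}{d-1}$ it suffices, by duality of $\ell^p$ spaces on $\Delta_N$ with the normalized counting measure, to bound $\frac{1}{|\Delta_N|}\sum_z\Phi_N(z) g(z)$ for an arbitrary nonnegative $g$ with $\|g\|_{\Delta_N,d}\le 1$, and the standard trick is to solve $L^{(N)}_{\omega^{(N)}} h = g - \bar g$ on $\Delta_N$ (with $\bar g$ the average of $g$ and suitable boundary values on the periodic torus), so that $\frac{1}{|\Delta_N|}\sum_z\Phi_N(z)(g(z)-\bar g)=0$ forces control of $\sum_z\Phi_N g$ once $h$ is controlled.

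The key point where $O_N$ enters is the passage from the discrete torus equation to an estimate on $h$: one must compare the generator $L^{(N)}$ of the rescaled walk (which takes steps of typical size $O_N$, not $1$) with the Laplacian, and the maximum principle Theorem \ref{thm:max_princ} then gives $\max_{\Delta_N} h - \max_{\partial^{(k)}\Delta_N} h\le 6N^2\|(L^{(N)}_{\omega^{(N)}}h)^+\|_{\Delta_N,d}$. The right-hand side is $\|g-\bar g\|_{\Delta_N,d}$, which is $O(1)$, but the solution $h$ on the torus must be rescaled: because each step of $Y$ corresponds to a displacement whose second moment is $O_N^2$, the Green's function / solution one constructs inherits a factor of $O_N$ when translating back to the right normalization, exactly as in \cite{GZ}. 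Concretely I would build $h$ as (a periodization of) $\sum_{m}\bigl(\quenchedE^{\cdot}[\tilde g(Y_m)] - \langle \tilde g\rangle\bigr)$ for a mollified $\tilde g$, use Lemma \ref{lem:gamma_big} (the bound $\Gamma_k\ge CkN^2$ with overwhelming probability) together with Remark \ref{rem:balancedmart} (the rescaled walk is a martingale) to see the walk exits a $2N$-box in $O(1)$ rescaled steps, and use Lemma \ref{lem:emp_dist} to control the $T_1\wedge N/2$ step sizes in $L^{2p}$ uniformly. Putting these together yields $\frac{1}{|\Delta_N|}\sum_z\Phi_N(z)g(z)\le C_1 O_N$, hence the claim by duality.

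The main obstacle I expect is precisely the bookkeeping in this change of scale: the maximum principle is stated for the rescaled walk's generator $L^{(N)}$ and for $Q$ of diameter $N$, and one needs the hypothesis \eqref{eq:condformax} to hold for the periodized environment $\omega^{(N)}$ with $k=(\log N)^{100}$, which is the content of Lemma \ref{lem:quen_ntail_est} (and Remark \ref{rem:log100}); but the solution $h$ of the torus equation must be chosen so that $(L^{(N)}_{\omega^{(N)}}h)^+$ is genuinely small in $\|\cdot\|_{\Delta_N,d}$ while $h$ itself carries the $O_N$ factor — getting the dependence on $O_N$ linear (and not, say, $O_N^2$) requires using the martingale property of $Y$ to kill the first-order term and only paying for the second moment $O_N^2$ once, under the square root in the relevant $L^2$ estimate. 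A secondary technical nuisance is the holding times coming from reflection at $\partial\Delta_N$, but as noted after \eqref{eq:deff} these only affect the clock, not the trajectory $f(Y_n)$, so they do not enter the stationarity identity and can be ignored for this lemma.
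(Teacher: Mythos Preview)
Your high-level plan --- test against an arbitrary nonnegative $g\in L^d$ and invoke the maximum principle of Theorem~\ref{thm:max_princ} --- matches the paper's, and you correctly flag Lemma~\ref{lem:gamma_big} and the martingale structure as the relevant inputs. But the concrete execution has a real gap.

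The Poisson equation $L^{(N)}_{\omega^{(N)}}h=g-\bar g$ on the torus is generically \emph{not} solvable here: the reflected rescaled chain on $\Delta_N$ is not irreducible (this is the whole point of the non-elliptic setting), so the range of $L^{(N)}$ consists only of functions that integrate to zero against \emph{every} stationary measure, and $g-\bar g$ (with $\bar g$ the uniform average) has no reason to lie there. Equivalently, your candidate $h=\sum_m\bigl(\quenchedE^{\cdot}[\tilde g(Y_m)]-\langle\tilde g\rangle\bigr)$ typically diverges. (Note that if it \emph{did} converge, stationarity alone would give $\frac{1}{|\Delta_N|}\sum_z\Phi_N(z)g(z)=\bar g$ with no reference to $h$ or to $O_N$ whatsoever --- which should already look suspicious.) The paper's device is a resolvent regularization: by stationarity of $H_N$,
\[
\frac{1}{(2N)^d}\sum_{z}\Phi_N(z)h(z)=\frac{O_N}{N^2}\sum_{z}\frac{\Phi_N(z)}{(2N)^d}\sum_{j\ge0}\Bigl(1-\frac{O_N}{N^2}\Bigr)^{j}\quenchedE^z_{\omega^{(N)}}\bigl[h(Y_j)\bigr],
\]
which converges unconditionally. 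One then decomposes the $j$-sum over the successive box-exit times $S_m$. The maximum principle is applied not to a torus Poisson solution but to the \emph{killed} Green's function $f^z(x)=\quenchedE^x_{\omega^{(N)}}\sum_{j=0}^{S_1-1}h(Y_j)$ in a single $2N$-box (Claim~\ref{claim:from_max}), which always exists and gives $\max_z f^z\le CN^2\|h\|_{\Delta_{2N},d}$.

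Your description of how $O_N$ enters is also off. The rescaled walk does \emph{not} exit a $2N$-box in $O(1)$ steps; Lemma~\ref{lem:gamma_big} says $\Gamma_m\gtrsim mN^2$ in original-walk time, and since $\Gamma_m=T_{S_m}$ with $E_{H_N}\bigl[T_k^2\bigr]\le k^2O_N^2$ (stationarity of the increments $T_i-T_{i-1}$ under $H_N$ and Cauchy--Schwarz), Markov's inequality gives that $P_{H_N}\bigl(S_m<\frac{N^2}{O_N}(\log m)^4\bigr)$ is summable in $m$. The damping rate $O_N/N^2$ is chosen precisely so that $\sum_m E_{H_N}\bigl[(1-O_N/N^2)^{S_m}\bigr]<\infty$; combining the prefactor $O_N/N^2$ with the $N^2$ from the maximum principle is what produces the linear $O_N$ in the final bound.
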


\begin{lemma}\label{lem:bndO}
$P$-almost surely, there exists a constant $C_2$ such that for every $N$ large enough and every $k<(\log N)^5$, if 
\[
\|\Phi_N\|_{\Delta_N,p}<k
\]
then
\[
O_N<C_2(\log k)^4.
\]
\end{lemma}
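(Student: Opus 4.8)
\emph{Proof proposal.} The plan is to control $O_N^2=\frac1{|\Delta_N|}\sum_{z\in\Delta_N}\Phi_N(z)(T^z)^2$, where I write $T^z:=T^z(\omega^{(N)},N)=\bigl(\quenchedE^z_{\omega^{(N)}}[(T_1\wedge N/2)^2]\bigr)^{1/2}$, by splitting the sum at a threshold $M=M(k)$ to be chosen:
\[
O_N^2=\frac1{|\Delta_N|}\sum_{z:\,T^z\le M}\Phi_N(z)(T^z)^2+\frac1{|\Delta_N|}\sum_{z:\,T^z>M}\Phi_N(z)(T^z)^2 .
\]
Since $\Phi_N$ is a probability density with respect to $P_N$, one has $\frac1{|\Delta_N|}\sum_{z\in\Delta_N}\Phi_N(z)=1$, so the first sum is at most $M^2$. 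The whole point is that the bulk of the sites contribute only this cheap term, while the dangerous sites are those with $T^z>M$; these are rare, so there one can afford to pay the full $L^p$-norm of $\Phi_N$.

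For the tail sum I would apply H\"older's inequality on $\Delta_N$ with the conjugate exponents $p=\frac{d}{d-1}$ and $d$ to the functions $\Phi_N$ and $(T^\cdot)^2{\bf 1}_{\{T^\cdot>M\}}$, giving
\[
\frac1{|\Delta_N|}\sum_{z:\,T^z>M}\Phi_N(z)(T^z)^2\le\|\Phi_N\|_{\Delta_N,p}\Bigl(\tfrac1{|\Delta_N|}\sum_{z:\,T^z>M}(T^z)^{2d}\Bigr)^{1/d}<k\Bigl(\tfrac1{|\Delta_N|}\sum_{z:\,T^z>M}(T^z)^{2d}\Bigr)^{1/d}.
\]
Because $\Delta_N\subseteq\Delta_{2N}$ with $|\Delta_{2N}|=2^d|\Delta_N|$ and the summand is nonnegative, the last average over $\Delta_N$ is at most $2^d$ times the corresponding average over $\Delta_{2N}$, which is exactly $2^dE_{\omega,N}(T^{2d};T>M)$. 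Now I would invoke Lemma \ref{lem:emp_dist} with the exponent $2d$ in place of $p$: $P$-almost surely there is $C=C(d)>0$ such that for all $N$ large enough and all $M\le(\log\log N)^{100}$ this is bounded by $2^de^{-CM^{1/3}}$. Hence the tail sum is $\le 2k\,e^{-(C/d)M^{1/3}}$, and altogether $O_N^2\le M^2+2k\,e^{-(C/d)M^{1/3}}$.

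It then remains to optimize in $M$. Taking $M=M(k):=\lceil(\tfrac dC\log(2k))^3\rceil$ makes the exponential term at most $1$, and the standing hypothesis $k<(\log N)^5$ forces $\log(2k)\le 6\log\log N$ for $N$ large, so that $M\le(\tfrac{6d}{C}\log\log N)^3+1\le(\log\log N)^{100}$ for $N$ large — precisely the regime in which Lemma \ref{lem:emp_dist} was applied. (One may assume $k\ge2$; for $k\le1$ the hypothesis $\|\Phi_N\|_{\Delta_N,p}<k$ is vacuous, since $\|\Phi_N\|_{\Delta_N,p}\ge\|\Phi_N\|_{\Delta_N,1}=1$.) Thus $O_N^2\le M^2+1\le C'(\log k)^6$ for a constant $C'=C'(d)$, and taking square roots yields $O_N\le\sqrt{C'}\,(\log k)^3\le C_2(\log k)^4$ for a suitable $C_2=C_2(d)$ and all $N$ large; this is in fact slightly stronger than the claimed bound.

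This lemma presents no serious obstacle: the real work sits in the uniform-in-$N$ stretched-exponential tail of the empirical distribution of $T^z$ over the periodized box, which is exactly Lemma \ref{lem:emp_dist}. The only thing to watch is keeping the threshold $M(k)$ below the cutoff $(\log\log N)^{100}$ of that lemma; this is where the constraint $k<(\log N)^5$ is used, and it is comfortable since $M(k)$ is only polylogarithmic in $\log N$. Note that neither the a priori bound $O_N\le(\log N)^4$ of Claim \ref{claim:ap_bound} nor the maximum principle is needed at this step.
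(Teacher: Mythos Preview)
Your proof is correct and follows essentially the same route as the paper: split $O_N^2$ at a threshold on the local quantity $T^z$, bound the sub-threshold part trivially, and use H\"older together with Lemma~\ref{lem:emp_dist} on the tail. The paper thresholds the squared quantity $f(z)=(T^z)^2$ at $j=(\log k)^4$ (yielding $O_N^2\le 2j$, hence $O_N\le C(\log k)^2$), whereas you threshold $T^z$ itself at $M\approx C(\log k)^3$; both choices fit comfortably under the $(\log\log N)^{100}$ cutoff of Lemma~\ref{lem:emp_dist}, and your explicit $2^d$ bookkeeping for $\Delta_N$ versus $\Delta_{2N}$ is a nice touch.
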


\begin{proof}[Proof of Lemma \ref{lem:lprn}]
The combination of Claim \ref{claim:ap_bound} and Lemmas \ref{lem:GZ} and \ref{lem:bndO} yields that for all $N$ large enough, $O_N\leq C_1C_2(\log O_N)^4$, and therefore $\sup\{O_N: N=1,2,\ldots\}<\infty$. Another application of Lemma \ref{lem:GZ} yields \eqref{eq:lpbound}.
\end{proof}

\begin{proof}[Proof of Lemma \ref{lem:bndO}]
Let $j=(\log k)^4$, and let $f(z)=\quenchedE_{\omega^{(N)}}^z\big[(T_1 \wedge N/2)^2\big]$. Then
\begin{eqnarray*}
O_N^2=\frac{1}{N^d}\sum_{z\in \Delta_N}\Phi_N(z)f(z) &=&\\
        \frac{1}{N^d}\sum_{z\in \Delta_N}\Phi_N(z)f(z){\bf 1}_{f(z)\leq j}
&+& \frac{1}{N^d}\sum_{z\in \Delta_N}\Phi_N(z)f(z){\bf 1}_{f(z)> j}\\
\leq j
&+& \|\Phi_N\|_{\Delta_N,p} \|f(z){\bf 1}_{f(z)> j}\|_{\Delta_N,d}\\
\leq j+ke^{-Cj^{1/3}}&\leq& 2j
\end{eqnarray*}
where the one before last inequality follows from H\"older's inequality, Lemma \ref{lem:emp_dist} and the assumption that $\|\Phi_N\|_{\Delta_N,p}<k$, and the last inequality follows from the fact that 
$j^{1/3}>\log k$. 

\end{proof}

\begin{proof}[Proof of Lemma \ref{lem:GZ}]
The argument is based on the proof of Theorem 5 in \cite{GZ}. Let $h:\Delta_{2N}\to\R^+$ be a test function. 
We extend $h$ to the entire $Z^d$ by $h(x):=h(x\mod 2N)$.

We remember that $\{Y_\cdot\}$ is the rescaled walk, and extend $\Phi_N$ to $\Delta_{2N}$ by
$\Phi_N(x):=\Phi_N(f(x))$ for $f$ as in \eqref{eq:deff}. The extended $\Phi_N$ is the Radon-Nikodym derivative of the measure $\tH$ defined as $\tH(x)=\frac{1}{2^d}H_N(f(x))$. 
Note that $\tH$ is stationary with respect to the (periodized) random walk on $\Delta_{2N}$.
Then
\begin{eqnarray}\label{eq:GZ}
\nonumber
&&\frac{1}{(2N)^d}\sum_{z\in \Delta_{2N}}\Phi_N(z)h(z)\\
\nonumber
&=&\frac{O_N}{N^2}\sum_{z\in \Delta_{2N}} \frac{\Phi_N(z)}{(2N)^d}
\sum_{j=0}^\infty\quenchedE_{\omega^{(N)}}^z\left(1-\frac{O_N}{N^2}\right)^jh(Y_j)\\
\nonumber
&=&\frac{O_N}{N^2}\sum_{z\in \Delta_{2N}} \frac{\Phi_N(z)}{(2N)^d}\sum_{m=0}^\infty
\sum_{j=S_m}^{S_{m+1}-1}\quenchedE_{\omega^{(N)}}^z\left(1-\frac{O_N}{N^2}\right)^jh(Y_j)\\
\\
\nonumber
&\leq&\frac{O_N}{N^2}\sum_{z\in \Delta_{2N}} \frac{\Phi_N(z)}{(2N)^d}\sum_{m=0}^\infty
\quenchedE_{\omega^{(N)}}^z\left(1-\frac{O_N}{N^2}\right)^{S_m}
\quenchedE_{\omega^{(N)}}^{Y_{S_m}} \sum_{j=0}^{S_{m+1}-S_m-1} h(Y_j)\\
\nonumber
&\leq&\frac{O_N}{N^2}
\left(\max_{z\in \Delta_{2N}}\quenchedE_{\omega^{(N)}}^z \sum_{j=0}^{S_1-1} h(Y_j)\right)
\left(\sum_{z\in \Delta_{2N}} \frac{\Phi_N(z)}{(2N)^d}\sum_{m=0}^\infty
\quenchedE_{\omega^{(N)}}^z\left(1-\frac{O_N}{N^2}\right)^{S_m}\right)
\end{eqnarray}

We use the following claim, whose proof will be given at the end of the proof of the lemma.
\begin{claim}\label{claim:from_max}
\begin{equation}\label{eq:from_max_princ}
\max_{z\in \Delta_{2N}}\quenchedE_{\omega^{(N)}}^z \sum_{j=0}^{S_1-1} h(Y_j)
\leq CN^2\|h\|_{\Delta_{2N},d}.
\end{equation}
\end{claim}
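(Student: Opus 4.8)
The plan is to apply the Maximum Principle (Theorem \ref{thm:max_princ}) to a suitable function built from $h$ on the cube $\Delta_{2N}$, in the spirit of the argument for Theorem 5 of \cite{GZ}. First I would fix the environment $\omega^{(N)}$ (which is a periodized environment, so Remark \ref{rem:log100} applies) and set $u(z) = \quenchedE_{\omega^{(N)}}^z \sum_{j=0}^{S_1-1} h(Y_j)$ for $z$ in a cube of side comparable to $N$, where $S_1$ is the exit time of the $2N$-neighborhood of the starting point, as in Definition \ref{def:sttime_Y}. The key structural fact is that $u$ satisfies a discrete equation: conditioning on the first step of the rescaled walk, $u(z) - \quenchedE_{\omega^{(N)}}^z[u(Y_{T_1^{(N)} \circ \dots})]$ picks up exactly $h(z)$ (the $j=0$ term) inside the relevant box, so $L^{(N)}_{\omega^{(N)}} u(z) = h(z)$ there, while on the boundary layer $u$ vanishes (the sum is empty once the walk has left). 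Here I use that the rescaled walk is balanced in the sense of Remark \ref{rem:balancedmart}, so the maximum principle applies with $\omega^{(N)}$ in place of an i.i.d. environment, the hypothesis \eqref{eq:condformax} being verified via Lemma \ref{lem:ntail_est} (or rather its quenched consequence, exactly as in Remark \ref{rem:log100}) with $k = (\log N)^{100}$.

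Next I would invoke the bound \eqref{eq:max_princ}: since $u \geq 0$ and $u$ vanishes on $\partial^{(k)}$ of its domain, and since $L^{(N)}_{\omega^{(N)}} u = h$ on the box, we get
\[
\max_{z} u(z) \;\leq\; 6 N^2 \left\| \big(L^{(N)}_{\omega^{(N)}} u\big)^+ \right\|_{\Delta_{2N}, d} \;\leq\; 6 N^2 \, \|h\|_{\Delta_{2N}, d},
\]
using $h \geq 0$. One technical point to handle carefully is the domain: $S_1$ is defined via the sup-norm $2N$-ball, so the natural domain for $u$ is a cube of side roughly $4N$ or $5N$, which only changes the constant in the $L^d$-norm normalization by a bounded factor (since $|\Delta_{cN}|/|\Delta_{2N}|$ is bounded); after renaming, this gives \eqref{eq:from_max_princ} with a possibly larger absolute constant $C$. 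I would also need the a priori finiteness of $u$, which follows from Lemma \ref{lem:gamma_big} (the tail estimate on $S_1$, equivalently $\Gamma_1$) together with boundedness of $h$ on the finite cube.

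The main obstacle I anticipate is the verification that $L^{(N)}_{\omega^{(N)}} u(z) = h(z)$ exactly on the interior box, i.e. correctly matching the one-step operator of the \emph{rescaled} walk used in Theorem \ref{thm:max_princ} with the increments of $\{Y_n\}$ appearing in the definition of $S_1$ and in the Green's-function-type sum $\sum_{j=0}^{S_1-1} h(Y_j)$. The subtlety is that $T_1^{(N)} = \min(T_1, k)$ truncates the rescaled step, so on the event $\{T_1 > k\}$ the "step" of the walk in the operator $L^{(N)}_\omega$ is not a genuine step of $\{Y_n\}$; one must check — using that this event has probability at most $e^{-(\log N)^3}$ and that $h$ is bounded on the cube — that this truncation contributes an error which is negligible compared to $\|h\|_{\Delta_{2N},d}$ (of order at least $N^{-d}$ worth of mass somewhere, hence polynomially bounded below), so it can be absorbed into the constant. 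Once this bookkeeping is done, the claim follows immediately from \eqref{eq:max_princ}.
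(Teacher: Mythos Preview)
Your approach is the paper's approach: build the Green-type function for the rescaled walk on a box of scale $N$ and apply Theorem~\ref{thm:max_princ}. There is, however, one point in your setup that does not work as written. You define a single function $u(z)=\quenchedE^{z}_{\omega^{(N)}}\sum_{j=0}^{S_1-1}h(Y_j)$ with $S_1$ the exit time from the $2N$-ball \emph{around the starting point} $z$, and then assert $L^{(N)}u=h$. That equation fails: after one rescaled step the relevant exit domain has shifted from the ball around $z$ to the ball around $Y_1$, so $u(Y_1)$ is not the conditional expectation of the remaining sum. The fix (and this is exactly what the paper does) is to freeze the reference point: for each fixed $z$ define $f^{z}(x)=\quenchedE^{x}_{\omega^{(N)}}\sum_{j=0}^{T^{z}-1}h(Y^{(N)}_j)$, where $T^{z}$ is the exit time from the \emph{fixed} cube $z+\Delta_{2N}$. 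This $f^{z}$ satisfies $L^{(N)}f^{z}=h$ on the cube, vanishes outside, and $f^{z}(z)$ is the quantity you want; apply Theorem~\ref{thm:max_princ} to $f^{z}$ for each $z$.

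The paper also handles the truncation differently from what you sketch. Instead of using the true rescaled walk $\{Y_j\}$ and bounding the error in the equation $L^{(N)}u\approx h$, the paper works with the \emph{truncated} rescaled walk $Y^{(N)}_j:=X_{T^{(N)}_j}$ (taking $k=N/2$, verified via Lemma~\ref{lem:quen_ntail_est}, rather than $k=(\log N)^{100}$), for which the discrete equation holds exactly. It then bounds the difference $\quenchedE^{z}_{\omega^{(N)}}\bigl|\sum h(Y_j)-\sum h(Y^{(N)}_j)\bigr|$ by Cauchy--Schwarz: the second moment is at most $CN^4\max h^2$, the probability the two sums differ is at most $N^2 e^{-cN^{1/3}}$, and $\max h\le (2N)^d\|h\|_{\Delta_{2N},d}$ absorbs the remainder. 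Your proposed route (control the error in the PDE directly) would also go through, but the paper's decoupling keeps the application of the maximum principle exact and pushes all the truncation bookkeeping into a single Cauchy--Schwarz estimate at the end.
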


We now estimate the remaining term, namely
\[
\sum_{z\in \Delta_{2N}} \frac{\Phi_N(z)}{(2N)^d}\sum_{m=0}^\infty
\quenchedE_{\omega^{(N)}}^z\left(1-\frac{O_N}{N^2}\right)^{S_m}.
\]
Note that this is 
\[
\sum_{m=0}^\infty E_{\tH_N}\left(1-\frac{O_N}{N^2}\right)^{S_m}
=\sum_{m=0}^\infty E_{H_N}\left(1-\frac{O_N}{N^2}\right)^{S_m},
\]
and that $H_N$ is a stationary distribution for $\{f(Y_\cdot)\}$. In particular, the sequence $\{T_k-T_{k-1}\}$ is stationary under $H_N$, and $E_{H_N}\big[(T_k-T_{k-1})^2\big]=O_N^2$ for every $k$.

Now, for a given $m>0$,
\begin{eqnarray}\label{eq:sm_1}
\nonumber
&& E_{H_N}\left(1-\frac{O_N}{N^2}\right)^{S_m} \\
\nonumber&\leq& \left(1-\frac{O_N}{N^2}\right)^{\frac{N^2}{O_N}(\log m)^4} + P_{H_N}\left(S_m<\frac{N^2}{O_N}(\log m)^4\right)\\
&\leq& 2e^{-(\log m)^4}+P_{H_N}\left(S_m<\frac{N^2}{O_N}(\log m)^4\right).
\end{eqnarray}
Let $C_{\ref{lem:gamma_big}}$ be the constant from Lemma \ref{lem:gamma_big}. Then

\begin{eqnarray}\label{eq:sm_2}
&&P_{H_N}\left(S_m<\frac{N^2}{O_N}(\log m)^4\right) \\
\nonumber
&\leq& P_{H_N}\left(\Gamma_m<C_{\ref{lem:gamma_big}}mN^2\right)
 + P_{H_N}\left(S_m<\frac{N^2}{O_N}(\log m)^4 \ ;\ \Gamma_m\geq C_{\ref{lem:gamma_big}}mN^2\right)
\end{eqnarray}

Lemma \ref{lem:gamma_big} takes care of the first summand, so all we have left to do is to control the second summand. By Markov's inequality,
\begin{eqnarray}\label{eq:markov}
\nonumber
&&P_{H_N}\left(S_m<\frac{N^2}{O_N}(\log m)^4 \ ;\ \Gamma_m\geq C_{\ref{lem:gamma_big}}mN^2\right)\\
\nonumber
&\leq& 
P_{H_N}\left(T_{\left[\frac{N^2}{O_N}(\log m)^4\right]}\geq C_{\ref{lem:gamma_big}}mN^2 \right)\\
&\leq & \frac {E_{H_N}\left[\left(T_{\left[\frac{N^2}{O_N}(\log m)^4\right]}\right)^2\right]}
{C_{\ref{lem:gamma_big}}^2m^2N^4}
\end{eqnarray}

and

\begin{eqnarray*}
&&E_{H_N}\left[\left(T_{\left[\frac{N^2}{O_N}(\log m)^4\right]}\right)^2\right]
= E_{H_N}\left[\left(
\sum_{i=1}^{\left[\frac{N^2}{O_N}(\log m)^4\right]}T_i-T_{i-1}
\right)^2\right]\\
&=& \sum_{i=1}^{\left[\frac{N^2}{O_N}(\log m)^4\right]} \sum_{j=1}^{\left[\frac{N^2}{O_N}(\log m)^4\right]}
E_{H_N}[(T_i-T_{i-1})(T_j-T_{j-1})]\\
&\leq& \frac{N^4}{O_N^2}(\log m)^8\cdot O_N^2 = N^4(\log m)^8
\end{eqnarray*}

Substituting in \eqref{eq:markov}, we get that
\[
P_{H_N}\left(S_m<\frac{N^2}{O_N}(\log m)^4 \ ;\ \Gamma_m\geq C_{\ref{lem:gamma_big}}mN^2\right) \leq \frac{(\log m)^8}{m^2 C_{\ref{lem:gamma_big}}^2}
\]

and combined with \eqref{eq:sm_1}, \eqref{eq:sm_2} and Lemma \ref{lem:gamma_big}, we get that for every test function $h$,
\[
\frac{1}{(2N)^d}\sum_{z\in \Delta_{2N}}\Phi_N(z)h(z)\leq C_1\|h\|_{\Delta_{2N},d}O_N.
\]

The duality of $L^d$ and $L^p$ now gives that $\|\Phi_N\|_{\Delta_N,p}=\|\Phi_N\|_{\Delta_{2N},p}\leq C_1O_N.$
%
%
\end{proof}
\begin{proof}[Proof of Claim \ref{claim:from_max}]
We need to show \eqref{eq:from_max_princ}.
We first estimate 
\[
\max_{z\in \Delta_{2N}}\quenchedE_{\omega^{(N)}}^z \sum_{j=0}^{S_1-1} h(Y^{(N)}_j)
\]
Where the walk $Y^{(N)}_j$ is defined by $Y^{(N)}_j=X_{T^{(N)}_j}$, with $T_1^{(N)}:=\min(T_1,N/2)$ and 
\[
T_{k+1}^{(N)}:=\min\big\{\{t>T_k^{(N)}\ :\ \{\alpha(T_k^{(N)}+1),\ldots,\alpha(t)\}=\{1,\ldots,d\}\}
\cup\{T_k^{(N)}+N/2\}\big\}.
\]

We fix $z\in \Delta_{2N}$,
and define the stopping time $T^{z}=\min\{j\ :\ Y^{(N)}_j\notin z+\Delta_{2N}\}$ and the function 
\[
f^z(x)=
\quenchedE_{\omega^{(N)}}^x \sum_{j=0}^{T^z-1} h(Y^{(N)}_j).
\]
Then $L^{(N)}f^z=h$. Almost surely, for all $N$ large enough, 
Condition \eqref{eq:condformax} with $k=N/2$ is satisfied by Lemma \ref{lem:quen_ntail_est}, and
therefore by Theorem \ref{thm:max_princ},
\[
\quenchedE_{\omega^{(N)}}^z \sum_{j=0}^{S_1-1} h(Y^{(N)}_j)=f^z(z)\leq CN^2\|h\|_{\Delta_{2N},d}.
\]

Therefore, all we need is to control 
\[
\quenchedE_{\omega^{(N)}}^z 
\left[
\left|\sum_{j=0}^{S_1-1} h(Y^{(N)}_j)
-
 \sum_{j=0}^{S_1-1} h(Y_j)
\right|\right].
\]

Now,
\[
\quenchedE_{\omega^{(N)}}^z 
\left[
\left|\sum_{j=0}^{S_1-1} h(Y^{(N)}_j)
-
 \sum_{j=0}^{S_1-1} h(Y_j)
\right|^2\right]
\leq CN^4\max_{z\in \Delta_{2N}}h^2(z)
\]
and 
\[
\quenchedP_{\omega^{(N)}}^z 
\left[
\left|\sum_{j=0}^{S_1-1} h(Y^{(N)}_j)
-
 \sum_{j=0}^{S_1-1} h(Y_j)
\right|\neq 0\right]\leq N^2e^{-cN^{1/3}}.
\]

>From Cauchy-Schwarz, we see that 
\begin{equation}\label{eq:cs}
\quenchedE_{\omega^{(N)}}^z 
\left[
\left|\sum_{j=0}^{S_1-1} h(Y^{(N)}_j)
-
 \sum_{j=0}^{S_1-1} h(Y_j)
\right|\right]\leq CN^4e^{-cN^{1/3}}\max_{z\in \Delta_{2N}}h(z).
\end{equation}
Noting that the size of the space $\Delta_{2N}$ is $(2N)^d$, we get that 
\[
\max_{z\in \Delta_{2N}}h(z)=\|h\|_{\Delta_{2N},\infty}\leq (2N)^d \|h\|_{\Delta_N,d}.
\]
%
%
%

With \eqref{eq:cs} we are now done.

\end{proof}

\subsection{A stationary measure for the original random walk on $\Delta_N$}\label{sec:non-rescaled}
Fix $p'$ to be strictly between $1$ and $p$. In Lemma \ref{lem:lprn} we controlled the $L^p$ norm of a stationary measure w.r.t. the rescaled random walk. We now use Lemma \ref{lem:lprn} to control the $L^{p'}$ norm of a stationary measure w.r.t. the original random walk.

\begin{lemma}\label{lem:lporig}
There exists $C$ such that $P$-almost surely for all $N$ large enough, every probability measure $Q_N$ which is stationary with respect to the original reflected random walk on $\Delta_N$ satisfies 
\[
\left\|
\frac{dQ_N}{dP_N}
\right\|_{\Delta_N,p'}<C.
\]
\end{lemma}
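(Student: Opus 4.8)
The plan is to deduce Lemma~\ref{lem:lporig} from Lemma~\ref{lem:lprn} by relating a stationary measure $Q_N$ for the \emph{original} reflected walk on $\Delta_N$ to the stationary measure $\stat_N$ for the \emph{rescaled} reflected walk. The key observation is that the rescaled walk is obtained from the original walk by watching it at the stopping times $T_k$ (truncated at $N/2$, as in the definition of $Y^{(N)}$), so there is a standard relationship between invariant measures of a Markov chain and invariant measures of its induced (watched) chain: if $Q_N$ is stationary for the original walk, then the measure $\stat_N$ obtained by "spreading $Q_N$ along the excursions between consecutive $T_k$'s" is stationary for the rescaled walk, and conversely $Q_N$ can be recovered from $\stat_N$ by the same spreading. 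Concretely, for a state $y\in\Delta_N$,
\[
Q_N(y) \;=\; \frac{1}{Z}\sum_{z\in\Delta_N}\stat_N(z)\,\quenchedE_{\omega^{(N)}}^{z}\Big[\#\{0\le j< T_1\wedge(N/2):\ f(X_j)=y\}\Big],
\]
where $Z$ is a normalizing constant equal to the $\stat_N$-expectation of $T_1\wedge(N/2)$, i.e.\ (up to the square) the quantity $O_N$ already controlled in the previous subsection (indeed $Z=E_{\stat_N}[T_1\wedge N/2]\le E_{\stat_N}[(T_1\wedge N/2)^2]^{1/2}=O_N$, and $Z\ge 1$ since $T_1\ge 1$).

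The argument then has three steps. First, I would record the identity above (or its analogue with the true $T_1$, handling the truncation exactly as in the proof of Claim~\ref{claim:from_max} via the $e^{-cN^{1/3}}$ tail bound of Lemma~\ref{lem:quen_ntail_est}), together with the fact that $O_N$ is bounded uniformly in $N$, $P$-almost surely — this is exactly the content of the bootstrap in the previous subsection (combine Claim~\ref{claim:ap_bound} with Lemmas~\ref{lem:GZ} and \ref{lem:bndO}, as in the proof of Lemma~\ref{lem:lprn}). Second, I would bound the kernel $K(z,y)=\quenchedE_{\omega^{(N)}}^{z}[\#\{0\le j<T_1\wedge N/2: f(X_j)=y\}]$: summing over $y$ gives $\sum_y K(z,y)=\quenchedE_{\omega^{(N)}}^z[T_1\wedge N/2]\le T^z(\omega^{(N)},N)$, and by Lemma~\ref{lem:emp_dist} the function $z\mapsto T^z$ has all its moments under $P_{\omega,N}$ bounded uniformly in $N$; moreover $K(z,y)$ is supported on $\|z-y\|_\infty$ not too large, but the crucial input is really just that each row-sum is in $L^q(P_N)$ for every $q$. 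Third, I would apply a Schur/Hölder-type estimate: writing $\Phi_N'=dQ_N/dP_N$ and $\Phi_N=d\stat_N/dP_N$, the identity becomes $\Phi_N'(y)=\frac{1}{Z}\frac{1}{|\Delta_N|}\sum_z \Phi_N(z)K(z,y)$ (after adjusting for the reference measure), and since $p>1$ and $\Phi_N\in L^p(P_N)$ with a uniform bound, while the integral operator with kernel $K$ maps $L^p\to L^{p'}$ boundedly for any $p'<p$ provided the relevant mixed norm of $K$ is controlled — which follows from the uniform moment bounds on the row sums together with the support restriction — one gets $\|\Phi_N'\|_{\Delta_N,p'}\le C$.

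The main obstacle I anticipate is making the last step — passing from the $L^p$ bound on $\stat_N$ to an $L^{p'}$ bound on $Q_N$ through the kernel $K$ — fully rigorous, since $K$ is random and one must ensure the operator norm bound holds $P$-a.s.\ uniformly in $N$. The clean way to do this is interpolation: $K$ obviously maps $L^1(P_N)\to L^1(P_N)$ with norm $\le \|\sum_y K(\cdot,y)\|_\infty$, which however is only $(\log N)^4$-ish large by Claim~\ref{claim:ap_bound}-type reasoning, not uniformly bounded; so instead one uses that $K$ maps $L^p(P_N)\to L^p(P_N)$ with a uniformly bounded norm (bounding both $\|\sum_y K(\cdot,y)\|_{\text{col-}\infty}$-type quantity and $\|\sum_z K(z,\cdot)\|$ in an $L^q$ sense via Lemma~\ref{lem:emp_dist}, exploiting that the total mass is $\sum_{z,y}K(z,y)/|\Delta_N|^2\approx O_N/|\Delta_N|$ which is tiny), and then that the reduction of exponent from $p$ to $p'$ buys the extra integrability needed to absorb the mild blow-up. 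An alternative, perhaps cleaner route avoiding operator theory: show directly that $Q_N$ is dominated — $Q_N(y)\le \frac{1}{Z}(\stat_N * g_N)(y)$ where $g_N(w)=\sup_z K(z,z+w)$ has controlled tails — and then use Young's convolution inequality together with the periodic structure on $\Delta_{2N}$, exactly in the spirit of how $\Phi_N$ was extended periodically in the proof of Lemma~\ref{lem:GZ}. Either way, the probabilistic content (uniform boundedness of $O_N$, moment bounds on $T^z$, the $e^{-cN^{1/3}}$ truncation error) is already available from the preceding lemmas, so the remaining work is the deterministic functional-analytic bookkeeping.
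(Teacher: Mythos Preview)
Your framework is exactly the paper's: reduce (by convexity) to ergodic $Q_N$, pick an $H_N$ stationary for the rescaled walk supported inside $\supp Q_N$, and use the occupation-time identity
\[
Q_N(y)=\frac{1}{E_{H_N}(T_1)}\sum_{z}H_N(z)\,\quenchedE_{\omega^{(N)}}^{z}\big[\#\{0\le j<T_1:\ f(X_j)=y\}\big],
\]
together with Lemma~\ref{lem:lprn}. The probabilistic inputs you list are the right ones.

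The gap is in your ``deterministic functional-analytic bookkeeping.'' Neither route you sketch goes through as stated. For a Schur-type bound you would need uniform control on row \emph{and} column sums of $K$; the row sums $T^z$ are only in $L^q(P_N)$, not $L^\infty$, and the column sums are equally random, so there is no uniform $L^p\to L^p$ (or $L^p\to L^{p'}$) operator bound available directly. For Young's inequality, the envelope $g_N(w)=\sup_z K(z,z+w)$ has no evident moment control: at each displacement $w$ the supremum is taken over $N^d$ points, and all you know about $K(z,z+w)$ is that it is bounded by $T^z$, which has a stretched-exponential but not uniform tail.

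The paper supplies the missing idea: \emph{slice the kernel by time}. Write $F=\sum_{i\ge 0}F_i$ with
$F_i(x)=\sum_z H_N(z)\,\quenchedP_{\omega^{(N)}}^z(X_i=x;\ T_1>i)$, so that $Q_N=F/E_{H_N}(T_1)$. Each slice has two elementary bounds:
\begin{itemize}
\item[(i)] an $L^p$ bound, since $F_i$ is dominated by the $i$-step distribution started from $H_N$, which is a convolution with a kernel supported in a ball of radius $i$, giving $\|dF_i/dP_N\|_{\Delta_N,p}\le (2i+1)^d\,\|dH_N/dP_N\|_{\Delta_N,p}$;
\item[(ii)] an $L^1$ bound, since $\|F_i\|_1=P_{H_N}(T_1>i)\le e^{-Ci^{1/3}}$ by Lemma~\ref{lem:ntail_est}.
\end{itemize}
Interpolating between (i) and (ii) (split $dF_i/dP_N$ at level $1$ and use H\"older with $1/p''+p'/p=1$) gives $\|dF_i/dP_N\|_{\Delta_N,p'}\le C(2i+1)^d e^{-C'i^{1/3}}\|dH_N/dP_N\|_{\Delta_N,p}$, which is summable in $i$. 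This time-decomposition is what converts the merely $L^q$-controlled row sums into a usable estimate; once you insert it, your proof is complete.
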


\begin{proof}
First note that due to the convexity of the norm $\|\cdot\|_{\Delta_N,p'}$, we may assume without loss of generality that the measure $Q_N$ is ergodic. Then the random walk is irreducible on $\supp Q_N$. It is also clear that if the random walk starts at a point in $\supp Q_N$, it will stay in $\supp Q_N$ forever. Therefore, there exists a measure $H_N$ which is supported on (a subset of) $\supp Q_N$ and is stationary with respect to the rescaled random walk.

Now consider the following random walk $\{X_n\}_{n=0}^\infty$ on $\Delta_N$: the initial point $X_0$ is determined according to the distribution $H_N$, and the walk continues according to the quenched kernel $\omega$ on $\Delta_N$, reflected at the boundary. 

For $i=0,\ldots,$ we define the measure (not a probability measure) $F_i$ on $\Delta_N$ by
\[
F_i(x)=\sum_{z\in\Delta_N}H_N(z)\quenchedP_\omega^z(X_i=x\ ; \ T_1>i).
\]

\begin{claim}\label{claim:sumfi}
For $P$-almost every $\omega$ and all $N$ large enough,
the sum
\[
\sum_{i=0}^\infty F_i
\]
converges to a finite measure $F$. Furthermore, $\|F\|_1=E_{H_N}(T_1)$ and $F$ is stationary w.r.t. the (original) random walk.
\end{claim}

Since the random walk is irreducible on $\supp Q_N$, there is a unique stationary measure for the original random walk, and therefore $Q_N=F/E_{H_N}(T_1)$. As $E_{H_N}(T_1)>1$, we get that
\begin{equation*}
\left\|
\frac{dQ_N}{dP_N}
\right\|_{\Delta_N,p'}
\leq
\sum_{i=0}^\infty
\left\|
\frac{dF_i}{dP_N}
\right\|_{\Delta_N,p'}.
\end{equation*}
Therefore, we want to estimate 
$
\left\|
\frac{dF_i}{dP_N}
\right\|_{\Delta_N,p'}
$
for given $i$.

%

We first estimate  
$
\left\|
\frac{dF_i}{dP_N}
\right\|_{\Delta_N,p}.
$
Note that 
\[
F_i(x)\leq G_i(x) := \sum_{z\in\Delta_N}H_N(z)\quenchedP_\omega^z(X_i=x),
\]

\begin{eqnarray*}
G_i(x) &=& \sum_{z\in\Delta_N\ :\ |z-x|\leq i}H_N(z)\quenchedP_\omega^z(X_i=x)\\
&\leq& \sum_{z\in\Delta_N\ :\ |z-x|\leq i}H_N(z).
\end{eqnarray*}

Therefore
\begin{eqnarray*}
\left(G_i(x)\right)^p
\leq (2i+1)^{d(p-1)}\sum_{z\in\Delta_N\ :\ |z-x|\leq i}(H_N(z))^p,
\end{eqnarray*}

so

\begin{eqnarray}\label{eq:normgi}
\left\|
\frac{dF_i}{dP_N}
\right\|_{\Delta_N,p}
\leq
\left\|\frac{dG_i}{dP_N}\right\|_{\Delta_N,p}
\leq
(2i+1)^{d}\left\|\frac{dH_N}{dP_N}\right\|_{\Delta_N,p}.
\end{eqnarray}

Let $p''$ be such that $\frac{1}{p''}+\frac{p'}{p}=1$.

We also want to estimate
$
\left\|
\frac{dF_i}{dP_N}
\right\|_{\Delta_N,1}.
$

\begin{eqnarray*}
\left\|
\frac{dF_i}{dP_N}
\right\|_{\Delta_N,1} = \sum_{x\in\Delta_N}F_i(x)
&=&\sum_{x\in\Delta_N}\sum_{z\in\Delta_N}H_N(z)\quenchedP_\omega^z(X_i=x\ ;\ T_1>i)\\
&=&\sum_{z\in\Delta_N}H_N(z)\quenchedP_\omega^z(T_1>i) \leq e^{-Ci^{1/3}},
\end{eqnarray*}
where the last inequality follows from Lemma \ref{lem:ntail_est}.

Now let
\[
Y(x)={\bf 1}_{\left\{\frac{F_i(x)}{P_N(x)}>1\right\}}
\]
and let $X_1(x)=Y(x)\cdot\frac{F_i(x)}{P_N(x)}$ and $X_2(x)=(1-Y(x))\cdot\frac{F_i(x)}{P_N(x)}$. Since $\frac{dF_i}{dP_N}=X_1+X_2$, we need to estimate $\|X_1\|_{\Delta_N,p'}$ and $\|X_2\|_{\Delta_N,p'}$.

Note that $X_2(x)\leq 1$ for every $x$, and therefore, $X_2^{p'}(x)\leq X_2(x)$. Therefore, 
$\|X_2\|_{\Delta_N,p'}\leq \|X_2\|_{\Delta_N,1}^{1/p'} \leq \exp\big(-(C/p')i^{1/3}\big).$

Note also that $Y(x)\in\{0,1\}$ and thus $Y^{p''}=Y$, so, using Markov's inequality,
\[
\|Y\|_{\Delta_N,p''}^{p''}=\|Y\|_{\Delta_N,1}\leq\left\|
\frac{dF_i}{dP_N}
\right\|_{\Delta_N,1}\leq \exp\big(-Ci^{1/3}\big),
\]

so $\|Y\|_{\Delta_N,p''}\leq \exp\big(-(C/p'')i^{1/3}\big).$
Then by H\"older's inequality,
\[
\|X_1\|_{p'}^{p'}\leq \left\|\frac{dF_i}{dP_N}
\right\|_{\Delta_N,p}^{p'}\cdot \|Y\|_{p''}
\leq \|H_N\|_p^{p'}\cdot (2i+1)^{dp'}  \exp\big(-(C/p'')i^{1/3}\big).
\]

We get that for appropriate constants $C_1$ and $C_2$,

\[
\left\|\frac{dF_i}{dP_N}\right\|_{p'}\leq C_1 (2i+1)^{d} \exp\big(-C_2i^{1/3}\big) \cdot \|H_N\|_p.
\]

The lemma now follows from Lemma \ref{lem:lprn} and the fact that
\[
\sum_{i=0}^\infty C_1 (2i+1)^{d} \exp\big(-C_2i^{1/3}\big) < \infty.
\]

\end{proof}

\begin{proof}[Proof of Claim \ref{claim:sumfi}]
First of all,
\begin{eqnarray*}
\|F_i\|_1= \sum_{x\in\Delta_N}F_i(x)
&=&\sum_{x\in\Delta_N}\sum_{z\in\Delta_N}H_N(z)\quenchedP_\omega^z(X_i=x\ ; \ T_1>i)\\
&=&\sum_{z\in\Delta_N}\sum_{x\in\Delta_N}H_N(z)\quenchedP_\omega^z(X_i=x\ ; \ T_1>i)\\
&=&\sum_{z\in\Delta_N}H_N(z)\quenchedP_\omega^z(T_1>i) = P_{H_N}(T_1>i).
\end{eqnarray*}
Therefore $\sum_{i=1}^\infty F_i$ converges and $\|F\|_1=E_{H_N}(T_1)$.

To show stationarity, we do the following calculation. Fix $x\in\Delta_N$.
\begin{eqnarray*}
\sum_{y\in\Delta_N}F(y)\quenchedP_\omega^y(X_1=x)
&=&\sum_{i=0}^\infty\sum_{y\in\Delta_N}F_i(y)\quenchedP_\omega^y(X_1=x)\\
=\sum_{i=0}^\infty\sum_{z\in\Delta_N}H_N(z)\quenchedP_\omega^z(X_{i+1}=x\ ; \ T_1>i)
&=&\sum_{j=1}^\infty\sum_{z\in\Delta_N}H_N(z)\quenchedP_\omega^z(X_{j}=x\ ; \ T_1\geq j)\\
=\sum_{j=1}^\infty\sum_{z\in\Delta_N}H_N(z)\quenchedP_\omega^z(X_{j}=x\ ; \ T_1> j)
&+&\sum_{j=1}^\infty\sum_{z\in\Delta_N}H_N(z)\quenchedP_\omega^z(X_{j}=x\ ; \ T_1= j)\\
=\sum_{j=1}^\infty F_j(x)+\sum_{z\in\Delta_N}H_N(z)\quenchedP_\omega^z(X_{T_1}=x)
&=&\sum_{j=1}^\infty F_j(x) + H_N(x) = F(x),
\end{eqnarray*}
where in the one before last step we used the stationarity of $H_N$ with respect to the rescaled random walk.
\end{proof}

We get a useful corollary.

\begin{corollary}\label{cor:phi}
There exists $\Phi>0$ which depends only on $P$ such that $P$-almost surely for all large enough $N$, every stationary measure $Q_N$ with respect to the reflected random walk in $\Delta_N$ under the environment $\omega$ satisfies
$|\supp Q_N|\geq\Phi|\Delta_N|$. 
\end{corollary}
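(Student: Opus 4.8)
The plan is to deduce this directly from the $L^{p'}$-bound of Lemma \ref{lem:lporig} together with the fact that $Q_N$ is a probability measure, via H\"older's inequality. First I would record that, since $P_N$ is the uniform measure on $\Delta_N$, any probability measure $Q_N$ on $\Delta_N$ satisfies
\[
\Big\|\tfrac{dQ_N}{dP_N}\Big\|_{\Delta_N,1}=\frac{1}{|\Delta_N|}\sum_{z\in\Delta_N}\frac{dQ_N}{dP_N}(z)=\sum_{z\in\Delta_N}Q_N(z)=1,
\]
and moreover $\frac{dQ_N}{dP_N}(z)=0$ for every $z\notin\supp Q_N$, so the sum above is really a sum over $\supp Q_N$.

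Next, fix $p'\in(1,p)$ as in Lemma \ref{lem:lporig} and let $q'=p'/(p'-1)$ be its conjugate exponent. Applying H\"older's inequality with exponents $p'$ and $q'$ on the probability space $(\Delta_N,P_N)$ to the functions $\frac{dQ_N}{dP_N}$ and $\mathbf 1_{\supp Q_N}$ gives
\[
1=\Big\|\tfrac{dQ_N}{dP_N}\,\mathbf 1_{\supp Q_N}\Big\|_{\Delta_N,1}
\le\Big\|\tfrac{dQ_N}{dP_N}\Big\|_{\Delta_N,p'}\cdot\Big(\tfrac{|\supp Q_N|}{|\Delta_N|}\Big)^{1/q'}.
\]
By Lemma \ref{lem:lporig} there is a constant $C$, depending only on $P$, such that $P$-almost surely $\|dQ_N/dP_N\|_{\Delta_N,p'}<C$ for all $N$ large enough and every stationary $Q_N$; substituting this bound and rearranging yields
\[
\frac{|\supp Q_N|}{|\Delta_N|}\ge C^{-q'}=:\Phi>0,
\]
which is the asserted inequality.

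Since the only input is Lemma \ref{lem:lporig}, there is essentially no obstacle here; this is an immediate corollary. The one point to be mildly careful about is that the constant $\Phi$ must depend only on $P$ (not on $\omega$ or $N$), which holds because both $C$ and $p'$ depend only on $P$, and that, exactly as in Lemma \ref{lem:lporig}, the statement is claimed only for $N$ large enough in the $P$-almost sure sense.
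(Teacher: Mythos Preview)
Your proof is correct and follows essentially the same route as the paper: both apply H\"older's inequality with exponents $p'$ and its conjugate to the identity $\|\tfrac{dQ_N}{dP_N}\|_{\Delta_N,1}=1$, then invoke the $L^{p'}$-bound of Lemma~\ref{lem:lporig} to extract the lower bound $|\supp Q_N|/|\Delta_N|\ge C^{-q'}$. The arguments are identical up to notation (your $q'$ is the paper's $p''$).
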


\begin{proof}
By Lemma \ref{lem:lporig},
\[
\left\|
\frac{dQ_N}{dP_N}
\right\|_{\Delta_N,p'}<C'.
\]
Now, by H\"older's inequality, for $p''$ such that $1/p'+1/p''=1$,
\begin{eqnarray*}
1
=\left\|
\frac{dQ_N}{dP_N}
\right\|_{\Delta_N,1}
\leq
\left\|
\frac{dQ_N}{dP_N}
\right\|_{\Delta_N,p'}
\cdot
\left\|
{\bf 1}_{\supp Q_N}
\right\|_{\Delta_N,p''}
\leq C'\left(
\frac{|\supp Q_N|}{|\Delta_N|}
\right)^{1/p''}.
\end{eqnarray*}

Therefore,
\[
\frac{|\supp Q_N|}{|\Delta_N|}\leq\frac1{{C'}^{p''}}=:\Phi.
\]

\end{proof}

\subsection{Existence of an invariant measure.}\label{sec:stnons}
Identically to \cite{GZ} and \cite{Lawler82}, from Lemma \ref{lem:lporig} we can prove that there exists a measure $Q$ on $\Omega$ such that $Q\ll P$ and $Q$ is stationary with respect to the random walk  viewed from the point of view of the particle.
\ifarxiv
In order to keep this paper self-contained, we state and prove it as Proposition \ref{prop:exist} below.
\else
We state it explicitly as Proposition \ref{prop:exist} below. The proof of this proposition is in the arxiv version of the paper.
\fi

Once we established $Q$, using Feller-Lindeberg's central limit theorem, see e.g. \cite{durrett}, we get the following fact.

\begin{fact}\label{fact:5.1}
If in addition $Q$ is ergodic, then $Q$ almost surely the quenched law $\quenchedP_\omega^0$
satisfies an invariance principle with a non-random diagonal, non-degenerate diffusion matrix.
\end{fact}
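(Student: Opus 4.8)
The proof is the standard martingale functional central limit theorem, in the form that is available here because the balanced condition makes $\{X_n\}$ itself a martingale, so no corrector is needed. Throughout we use the hypothesis that $Q\ll P$ is stationary and \emph{ergodic} for the environment chain $\{\bar\omega_n\}$ of \eqref{eq:genpart}.

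First I would record the structure of the walk. Since $\omega$ is balanced,
\[
\quenchedE_\omega^0\!\left[X_{n+1}-X_n\mid X_0,\dots,X_n\right]=\sum_{i=1}^d\big(\omega(X_n,e_i)-\omega(X_n,-e_i)\big)e_i=0,
\]
so under $\quenchedP_\omega^0$ the process $\{X_n\}$ is a martingale with increments of norm $1$, whose conditional covariance is a function of the environment seen from the particle:
\[
\quenchedE_\omega^0\!\left[(X_{n+1}-X_n)(X_{n+1}-X_n)^{\top}\mid X_0,\dots,X_n\right]=D(\bar\omega_n),\qquad D(\omega):=\operatorname{diag}\!\big(2\omega(0,e_1),\dots,2\omega(0,e_d)\big),
\]
the off-diagonal entries vanishing because every increment is $\pm e_i$ for a single $i$. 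When the environment is drawn from $Q$ the sequence $\{(\bar\omega_n,X_{n+1}-X_n)\}_{n\ge 0}$ is stationary and ergodic, being a factor of the $Q$-ergodic Markov chain $\{\bar\omega_n\}$. Next I would invoke the pointwise ergodic theorem for this chain: for $Q$-a.e.\ starting environment $\omega$ it gives, $\quenchedP_\omega^0$-a.s.\ and uniformly for $t$ in compact intervals (the partial sums being nondecreasing in $\lfloor Nt\rfloor$),
\[
\frac1N\sum_{n=0}^{\lfloor Nt\rfloor-1}D(\bar\omega_n)\;\longrightarrow\;t\,\bar D,\qquad \bar D:=E_Q[D]=\operatorname{diag}\!\big(2E_Q[\omega(0,e_1)],\dots,2E_Q[\omega(0,e_d)]\big).
\]
The Lindeberg condition is trivial since the increments are bounded by $1$. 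Hence, for $Q$-a.e.\ $\omega$, the Lindeberg--Feller invariance principle for martingales (see e.g.\ \cite{durrett}) applies under $\quenchedP_\omega^0$ and yields $X^N\Rightarrow$ a Brownian motion with the deterministic diagonal covariance $\bar D$; the passage from the averaged statement under $Q\otimes\quenchedP_\omega^0$ to the quenched one for $Q$-a.e.\ $\omega$ is automatic, since both inputs of the martingale invariance principle (quadratic-variation convergence and the Lindeberg bound) hold $\quenchedP_\omega^0$-a.s.\ pointwise in $\omega$.

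It remains to show that $\bar D$ is nondegenerate, i.e.\ $E_Q[\omega(0,e_i)]>0$ for each $i$, and this I expect to be the only delicate point. I would argue by contradiction: if $E_Q[\omega(0,e_i)]=0$ then $\omega(0,e_i)=0$ for $Q$-a.e.\ $\omega$, hence, since $\{\omega:\omega(0,e_i)=0\}$ is closed of full $Q$-measure, $\omega(z,e_i)=0$ for \emph{every} $z\in\supp_\omega Q$. Moreover, as $\bar\omega_n$ has law $Q$ for each $n$, the probability that the $n$-th step is $\pm e_i$ (when $\omega\sim Q$) equals $2E_Q[\omega(0,e_i)]=0$, so $Q\otimes\quenchedP_\omega^0$-a.s.\ the walk never changes its $i$-th coordinate. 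Combining this with the ergodicity of $Q$, which makes the walk irreducible on $\supp_\omega Q$, forces $\supp_\omega Q$ to lie inside a single hyperplane $\{x^{(i)}=\text{const}\}$; but that contradicts the positive lower bound on the density of $\supp_\omega Q$ furnished by Corollary~\ref{cor:phi} (together with genuine $d$-dimensionality and the properties of the sink established in Section~\ref{sec:high_d}). This confinement-versus-density dichotomy is the heart of the matter; everything else is the routine martingale machinery.
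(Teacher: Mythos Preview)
Your treatment of the invariance principle itself is the same as the paper's: both invoke the martingale Lindeberg--Feller theorem, which applies because the balanced walk is a bounded-increment martingale and ergodicity of $Q$ gives $\frac1N\sum_{n<\lfloor Nt\rfloor}D(\bar\omega_n)\to t\bar D$. The diagonality argument is likewise identical.

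The divergence, and the gap, is in the non-degeneracy step. You assert that ergodicity of $Q$ ``makes the walk irreducible on $\supp_\omega Q$'' and then confine $\supp_\omega Q$ to a single hyperplane $\{x^{(i)}=c\}$. But ergodicity of $Q$ for the environment chain does \emph{not} directly imply that $\supp_\omega Q$ is strongly connected; the paper establishes only (Lemma~\ref{lem:posdens}) that $\supp_\omega Q$ is a finite union of sinks, and remarks explicitly that it does not prove this number is one. Your argument can be salvaged along those lines---each sink is strongly connected, hence, if the walk never moves in direction $i$ on $\supp_\omega Q$, confined to a hyperplane, hence of zero density, contradicting Lemma~\ref{lem:posdens}(\ref{item:posdens})---but as written the key inference is unjustified, and even the repair is a forward reference to the percolation analysis of Section~\ref{sec:high_d}.

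The paper's own argument for non-degeneracy avoids all of this by going through the rescaled walk: on each interval $(T_{k-1},T_k]$ the walk makes at least one step in direction $\pm e_i$, so
\[
M_{i,i}=\lim_{n\to\infty}\frac1n\sum_{j=1}^n\mathbf 1_{\langle X_j-X_{j-1},e_i\rangle\ne 0}\;\ge\;\lim_{n\to\infty}\frac1n\,\#\{k\ge 1:T_k\le n\}=\frac1{E_Q(T_1)}>0,
\]
the finiteness of $E_Q(T_1)$ coming from Lemma~\ref{lem:for7} together with the $L^{p'}$-bound on $dQ/dP$ inherited from Section~\ref{sec:perstat}. This is shorter and needs no information about the connectivity structure of $\supp_\omega Q$.
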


\begin{proof}[Proof that the matrix is diagonal and non-degenerate]
The proof that the matrix is diagonal is easy: For every balanced $\omega\in\Omega$, every $1\leq i\neq j\leq d$ and every two times $n,m$,
\[
\quenchedE_\omega\big[\langle X_n-X_{n-1},e_i\rangle\cdot\langle X_m-X_{m-1},e_j\rangle\big]=0
\]
and therefore the covariance matrix of $X_n$ is diagonal for every $n$, and therefore the diffusion matrix is diagonal.
Let $M$ be the diffusion matrix. Since it is diagonal, in order to see that it is non-degenerate, all we need is to show that $M_{i,i}\neq 0$ for every $i$. Now, by the stationarity and ergodicity of $Q$,
\begin{eqnarray*}
M_{i,i}&=&\lim_{n\to\infty}\frac 1n\sum_{j=1}^n{\bf 1}_{\langle X_j-X_{j-1},e_i\rangle\neq 0}\\
&\geq&
\lim_{n\to\infty}\frac 1n\sum_{j=1}^n{\bf 1}_{\exists_k\mbox{ s.t }j=T_k}
=\frac 1 {E_Q(T_1)}>0.
\end{eqnarray*}
\end{proof}

Note that even though $Q\ll P$, in the non-elliptic case it is not necessarily the case that $P\ll Q$, as is illustrated in Figure \ref{fig:noteqviv}.

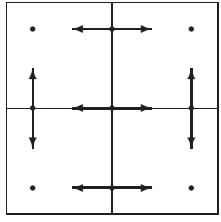
\begin{figure}[t]
\begin{center}
{
\begin{picture}(80,80)
\linethickness{0.05pt}
\put(0,0){\line(0,1){80}}
\put(0,0){\line(1,0){80}}
\put(80,80){\line(0,-1){80}}
\put(80,80){\line(-1,0){80}}
\linethickness{0.2pt}
\put(40,0){\line(0,1){80}}
\put(0,40){\line(1,0){80}}
\linethickness{0.4pt}
\multiput(40,10)(0,30){3}{\vector(1,0){15}}
\multiput(40,10)(0,30){3}{\vector(-1,0){15}}
\multiput(10,40)(60,0){2}{\vector(0,1){15}}
\multiput(10,40)(60,0){2}{\vector(0,-1){15}}
\multiput(10,10)(0,30){3}{\circle*{1.5}}
\multiput(40,10)(0,30){3}{\circle*{1.5}}
\multiput(70,10)(0,30){3}{\circle*{1.5}}
\end{picture}
}
\caption{\sl 
A configuration that has a positive $P$-measure, but zero $Q$-measure. The $Q$-measure is zero because the configuration presented here cannot occur at the second step, and $Q$ is stationary (i.e. the second step has the same distribution as the first step).
}
\label{fig:noteqviv}
\end{center}
\end{figure}

In Section \ref{sec:high_d} we show how the two remaining problems (i.e. the question of ergodicity and the fact that the measures are not equivalent) are dealt with.

\begin{proposition}\label{prop:exist}
There exists a probability measure $Q$ on $\Omega$ such that
\begin{enumerate}
\item $Q\ll P$.
\item $Q$ is invariant w.r.t. the point of view of the particle.
\end{enumerate}
\end{proposition}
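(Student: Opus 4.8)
The plan is to run the by-now classical limiting argument of \cite{Lawler82} and \cite{GZ}, with Lemma \ref{lem:lporig} as the only non-classical input. First I would build finite-volume candidates: for each $N$ large enough that the chain is well defined (Lemma \ref{lem:quen_ntail_est}), fix a stationary measure $Q_N=Q_N^\omega$ for the original reflected walk on $\Delta_N$ under $\omega$, write $\Psi_N(\cdot\,;\omega)=dQ_N^\omega/dP_N$, and set
\[
\hat\mu_N^\omega:=\frac1{|\Delta_N|}\sum_{z\in\Delta_N}\Psi_N(z;\omega)\,\delta_{\tau_{-z}\,\omega^{(N)}},\qquad \mu_N:=\int_\Omega\hat\mu_N^\omega\,dP(\omega).
\]
Each $\hat\mu_N^\omega$ is a probability measure (total mass $\frac1{|\Delta_N|}\sum_z\Psi_N(z;\omega)=\sum_zQ_N^\omega(z)=1$), hence so is $\mu_N$; informally $\mu_N$ is ``the environment seen from a $Q_N$-typical site of a $P$-typical reflected-periodized environment''. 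Since $\Omega$ is compact, $\{\mu_N\}$ has a subsequential weak limit $Q$, and I would fix such a subsequence throughout.

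For absolute continuity, let $p''=d$ be conjugate to $p=\tfrac d{d-1}$ and let $f\ge0$ be bounded, continuous and depending only on the coordinates in a box $B_R(0)$. If $z\in\Delta_N$ is at distance $>R$ from $\partial\Delta_N$ then $\omega^{(N)}$ agrees with $\omega$ on $z+B_R(0)$, so $f(\tau_{-z}\omega^{(N)})=f(\tau_{-z}\omega)$; splitting the sum in $\hat\mu_N^\omega$ into this deep interior and a boundary layer of relative volume $O(R/N)$, then using H\"older's inequality in $z$ and Lemma \ref{lem:lporig}, I would get, $P$-a.s.\ for all large $N$,
\[
\int_\Omega f\,d\hat\mu_N^\omega\ \le\ C\Big(\frac1{|\Delta_N|}\sum_{z\in\Delta_N}f(\tau_{-z}\omega)^{p''}\Big)^{1/p''}+o(1).
\]
By the ergodic theorem for the i.i.d.\ field $P$ the empirical average converges $P$-a.s.\ to $\int f^{p''}\,dP$, so $\limsup_N\int f\,d\hat\mu_N^\omega\le C\|f\|_{L^{p''}(P)}$ $P$-a.s. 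Since $\hat\mu_N^\omega$ is a probability measure one has the crude bound $\int f\,d\hat\mu_N^\omega\le\|f\|_\infty$ for all $N,\omega$, so reverse Fatou gives $\limsup_N\int f\,d\mu_N=\limsup_N\int_\Omega\!\big(\int f\,d\hat\mu_N^\omega\big)dP(\omega)\le C\|f\|_{L^{p''}(P)}$, and passing to the limit along the chosen subsequence, $\int f\,dQ\le C\|f\|_{L^{p''}(P)}$. As such $f$ are dense in $L^{p''}(P)$ and $f\mapsto\int f\,dQ$ is a bounded positive linear functional, duality gives $dQ/dP\in L^{p}(P)$; in particular $Q\ll P$, i.e.\ (1).

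For invariance w.r.t.\ the point of view of the particle it is enough to prove $\int Lf\,dQ=0$ for $f$ bounded, continuous and finitely supported (so $Lf$ is bounded and continuous), with $L$ as in \eqref{eq:genpart}. Writing $\phi(z):=f(\tau_{-z}\omega^{(N)})$ (which is $2N$-periodic in $z$), one has $Lf(\tau_{-z}\omega^{(N)})=\sum_e\omega^{(N)}(z,e)[\phi(z+e)-\phi(z)]$, i.e.\ the generator of the $\omega^{(N)}$-walk applied to $\phi$ at $z$; for $z$ in the deep interior of $\Delta_N$ this coincides with the generator of the reflected walk on $\Delta_N$ applied to $\phi|_{\Delta_N}$, and summing against $Q_N^\omega$ over all of $\Delta_N$ gives $0$ by stationarity of $Q_N^\omega$. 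Hence $\int Lf\,d\hat\mu_N^\omega$ reduces to a boundary-layer term bounded by $C\|Lf\|_\infty\|\Psi_N(\cdot\,;\omega)\|_{\Delta_N,p}(R/N)^{1/p''}$, which tends to $0$ $P$-a.s.\ by Lemma \ref{lem:lporig} (equivalently, replacing $\hat\mu_N^\omega$ by its unfolding onto the $2N$-torus removes the boundary and makes $\int Lf\,d\hat\mu_N^\omega=0$ exact). Since $|\int Lf\,d\hat\mu_N^\omega|\le\|Lf\|_\infty$, dominated convergence yields $\int Lf\,d\mu_N\to0$, and weak convergence along the subsequence together with continuity of $Lf$ gives $\int Lf\,dQ=0$, i.e.\ (2).

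I do not expect a genuinely hard step here: all the analytic content has been packed into the uniform $L^{p}$ bound of Lemma \ref{lem:lporig}, and the remaining work is exactly the Kozlov-type limiting argument, so the proposition really does follow ``identically to \cite{GZ} and \cite{Lawler82}''. The only two points deserving care are (i) transferring the $L^{p}$ bound to the weak limit — the source of absolute continuity — where the cheap a priori bound $\int f\,d\hat\mu_N^\omega\le\|f\|_\infty$ together with reverse Fatou circumvents any uniform-integrability worry about $\|\Psi_N\|_{\Delta_N,p}$; and (ii) verifying that the reflected/periodized finite-volume dynamics differs from the point-of-view-of-the-particle dynamics only in a boundary layer whose $Q_N$-mass, controlled again by Lemma \ref{lem:lporig}, vanishes as $N\to\infty$.
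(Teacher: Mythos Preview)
Your proposal is correct and follows essentially the same Kozlov-type limiting argument as the paper: build empirical measures on $\Omega$ from the finite-volume stationary measures $Q_N$, extract a subsequential weak limit on the compact $\Omega$, and use the uniform bound of Lemma~\ref{lem:lporig} both to kill boundary effects (for invariance) and to obtain absolute continuity. The only cosmetic differences are that the paper works quenched (fixing a single $\omega$ and using shifts $\tau_{-z}\omega$ of the \emph{original} environment rather than the periodized one) and proves $Q\ll P$ by a short contradiction argument rather than by your direct duality bound; note also that Lemma~\ref{lem:lporig} yields an $L^{p'}$ bound with $1<p'<p=\tfrac d{d-1}$, so your conjugate exponent should be taken strictly larger than $d$, not exactly $d$---this changes nothing in the argument.
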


\ifarxiv
\else
\ignore{
\fi

\begin{proof}
Fix $\omega\in\Omega$, and define $Q_N$ to be an (arbitrarily chosen) invariant measure for the reflected random under the environment $\omega$ in $\Delta_N$, and let $P_N$ be the uniform measure on $\Delta_N$.
For every $N$ we define the measure $Q^{(N)}$ on $\Omega$ to be
\[
Q^{(N)}=\sum_{z\in\Delta_N}Q_N(z)\delta_{\tau_{-z}(\omega)}
\]
and the measure $P^{(N)}$ to be 
\[
P^{(N)}=\sum_{z\in\Delta_N}P_N(z)\delta_{\tau_{-z}(\omega)}.
\]

By compactness, there exists a subsequence $(Q^{(N_k)})$ which converges weakly to a probability measure $Q$ on $\Omega$. It is easy to show that $Q$ is invariant w.r.t. the point of view of the particle for $P$-almost every $\omega$, so we now show that $Q\ll P$ for $P$-almost every $\omega$, using the fact that for $P$-a.e. $\omega$, for every $j$,
\[
\lim_{N\to\infty}Q_N(\{z:\exists_{x\in\partial\Delta_N}\mbox{ s.t. } \|z-x\|\leq j\})=0.
\]

Note that for almost every $\omega$, the sequence $P^{(N)}$ converges to $P$, and there exists $C$ such that $\big\|\frac{dQ_N}{dP_N}\big\|_{p'}<C$ for every $N$. From this we get immediately that for every $N$,
\begin{equation}\label{eq:bndrn}
\big\|\frac{dQ^{(N)}}{dP^{(N)}}\big\|_{p'}<C. 
\end{equation}
Assume for contradiction that $Q\not\ll P$. Then there exists $A\subseteq\Omega$ such that $\alpha:=Q(A)>0$ and $P(A)=0$. For every $\epsilon$ we can find an $A_\epsilon\subseteq\Omega$ which is determined by finitely many coordinates, and such that $Q(A_\epsilon)>\alpha/2$ and $P(A_\epsilon)\leq\epsilon$. Then for all $k$ large enough, $Q^{(N_k)}(A_\epsilon)>\alpha/4$ and $P^{(N_k)}(A_\epsilon)\leq 2\epsilon$, and therefore 
\[
\left\|\frac{dQ^{(N)}}{dP^{(N)}}\right\|^{p'}_{p'}
\geq P^{(N_k)}(A_\epsilon)\left(\frac{Q^{(N_k)}(A_\epsilon)}{P^{(N_k)}(A_\epsilon)}\right)^{p'}
\geq \left(\frac\alpha 4\right)^{p'}(2\epsilon)^{1-p'}
\]

For $\epsilon$ small enough this is in contradiction with \eqref{eq:bndrn}.

\end{proof}

\ifarxiv
\else
}
\fi

\ignore{


The main statement in this section is the coupling statement in Lemma \ref{lem:coup} below. We will then apply Lemma \ref{lem:coup} to prove Lemma \ref{lem:ergod} which says that $Q$ is ergodic, and Lemma \ref{lem:invprin} which says that an invariance principle holds $P$-a.s.
Note that Lemma \ref{lem:invprin} supplies a proof for Theorem \ref{thm:main} in dimension 2.

\begin{lemma}\label{lem:coup} Let $d=2$. Let $x$ and $y$ be two vertices in $\Z^2$ which have the same parity (i.e. $(-1)^{x^{(1)}+x^{(2)}}=(-1)^{y^{(1)}+y^{(2)}}$). Then $P$-a.s. there exists a coupling $\mu_{\omega}^{x,y}$ of two random walks $\{X_n\}$ and $\{Y_n\}$ with marginal distributions (resp.) $\quenchedP_\omega^x$ and $\quenchedP_\omega^y$ s.t.
\[
\mu_{\omega}^{x,y}
\left[
\exists_T\ \mbox{s.t.}\ \forall_{t>T}\ X_t=Y_t
\right]=1.
\]
\end{lemma}

\begin{lemma}\label{lem:ergod}
Let $d=2.$ Then the measure $Q$ is ergodic.
\end{lemma}

\begin{lemma}\label{lem:invprin}
Let $d=2.$ Then
$P$-a.s. the law $\quenchedP_\omega^0$ satisfies an invariance principle.
\end{lemma}

\begin{remark}
Using the invariance principle and the existence of an invariant measure $Q$, by an argument of Kesten we can prove that the random walk is recurrent on $\supp_\omega Q$. It is clearly not recurrent outside $\supp_\omega Q$.
\end{remark}

\begin{proof}[Proof of Lemma \ref{lem:coup}]
We define the coupling as follows. First we run $X_n$ and $Y_n$ independently of each other, until the stopping time $S$ defined as $S=\min\{n>0\ :\ X^{(1)}_n+X^{(2)}_n=Y^{(1)}_n+Y^{(2)}_n\}$. The stopping time $S$ is almost surely finite because $\{W_\cdot=X^{(1)}_\cdot+X^{(2)}_\cdot\}$
and $\{Z_\cdot=Y^{(1)}_\cdot+Y^{(2)}_\cdot\}$ are independent one-dimensional simple random walks having the same parity.

After the stopping time $S$, and until the stopping time $T=\min\{n>0 : X_n = Y_n \}$ for each time $n\geq S$, we sample three random variables as follows: The variable $H_n^{(X)}$ takes the value $1$ with probability $\omega(X_n,e_1)+\omega(X_n,-e_1)$ and the value $2$ with probability $\omega(X_n,e_2)+\omega(X_n,-e_2)$.
The variable $H_n^{(Y)}$ is defined equivalently for $Y_n$.
 $U_n$ takes the value $+1$ with probability 0.5 and the value $-1$ with the same probability. Conditioned on $X_n$, $Y_n$ and $\omega$, the variables $H_n^{(X)}, H_n^{(Y)}$ and $U_n$ are independent of each other and of $X_k, Y_k, k\leq n-1$.

Let
\[
e(i)=\left\{
\begin{array}{ll}
e_1 & \mbox{if } i=1\\
e_2 & \mbox{if } i=2
\end{array}
\right..
\]
Then we take $X_{n+1}=X_n+U_n\cdot e\big(H_n^{(X)}\big)$ and $Y_{n+1}=Y_n+U_n\cdot e\big(H_n^{(Y)}\big)$. 

>From time $T$ and on, the two walks move together.

It is very easy to verify that this is indeed a coupling. What's left to be shown is that for almost every $\omega$, with probability $1$ the stopping time $T$ is finite.

To this end, note that by the construction of the coupling, for every $n\geq S$ we have that $W_n=Z_n$. We thus need to show that almost surely there exists $n>S$ such that
$\bar W_n=\bar Z_n$ where $\bar W_n=X_n^{(1)}-X_n^{(2)}$ and $\bar Z_n=Y_n^{(1)}-Y_n^{(2)}$. Note that $\bar W_n-\bar Z_n$ is (up to a factor of 2) is a time changed one-dimensional simple random walk. Thus we need to show that conditioned on the event $T=\infty$, the random walk makes infinitely many steps. Let $n_1, n_2, \ldots$ be the sequence of times in which $W_n=S_n=1+ \max\{W_k\ :\ k\leq n-1\}$. Clearly there are infinitely many such times, and in such times both $X_n$ and $Y_n$ are points that had never been visited before. Thus, in the annealed sense, $H_{n_k}^{(X)}, H_{n_k}^{(Y)}, k=1,2,\ldots$ are all independent, and thus $H_{n_k}^{(X)}\neq H_{n_k}^{(Y)}$ infinitely many times and thus the random walk moves infinitely often.

\end{proof}

\begin{proof}[Proof of Lemma \ref{lem:ergod}]
Assume for contradiction that $Q$ is not ergodic.
Let $G=\supp Q = \{\omega\in\Omega\ :\ \frac{dQ}{dP}(\omega)>0\}$ where the derivative is a Radon-Nikodym derivative. 
Then there exists a set $A\subseteq\Omega$ such that $0<Q(A)<1$ and such that the measure $Q_A$ defined by $Q_A(U)=Q(U\cap A)/Q(A)$ is stationary w.r.t. the random walk viewed from the point of view of the particle. Let $B=G\setminus A$. Then $0<Q(B)<1$ and $Q_B$ is stationary w.r.t. the random walk viewed from the point of view of the particle. For $\omega\in\Omega$, we define 
\[
A_\omega= \{z\in\Z^2\ :\ \tau_{-z}(\omega)\in G\cap A\},
\]
and
\[
B_\omega= \{z\in\Z^2\ :\ \tau_{-z}(\omega)\in G\cap B\}.
\]
$P(A)>0$ and $P(B)>0$, and therefore, by ergodicity of $P$, for $P$-almost every $\omega$, both $A_\omega$ and $B_\omega$ are non-empty. Note also that $A_\omega$ and $B_\omega$ are disjoint.

We now claim that for $P$-almost every $\omega$, every $z\in\Z^2$ and every neighbor $e$ of the origin, if $z\in A_\omega$ and $\omega(z,e)>0$ then $z+e\in A_\omega$. Due to shift invariance, it is sufficient to show this claim for $z=0$ and $e=e_1$. Indeed, let
$D=\{\omega\in A\cap G : \omega(0,e_1)>0 \mbox{ and } \tau_{-e_1}(\omega)\notin A\cap G\}$.
Then, for the generator $L$ of the process viewed from the point of view of the particle and the function $f={\bf 1}_{A\cap G}$, 
\[
0 = - \int MfdQ_A \geq \int_D\omega(0,e_1)dQ_A(\omega)
\]
where $M$ is the generator of the process w.r.t. the point of view of the particle as defined in \eqref{eq:genpart}.
The first equality follows from the stationarity of $Q_A$. This implies that $D$ is of measure zero, as desired.

The exact same thing holds for $B$ as well.

Therefore, for almost every $\omega$ there are even points in both $A_\omega$ and $B_\omega$, and thus for every coupling $\mu$ of a RWRE $X_n$ starting from such point in $A_\omega$ and $Y_n$ starting from such point in $B_\omega$, we get that
\[
\mu\left(
\exists_T \mbox{ s.t. } X_T=Y_T
\right)=0
\]
in contradiction to Lemma \ref{lem:coup}
\end{proof}

\begin{proof}[Proof of Lemma \ref{lem:invprin}]
As in the proof of the previous lemma, 
let $G=\{\omega\in\Omega\ :\ \frac{dQ}{dP}(\omega)>0\}$ and let
$G_\omega=\{z\in\Z^d\ :\ \tau_{-z}(\omega)\in G\}$. As in the proof of the previous lemma, $P$-almost surely, there exists an even point $y$ in $G$. By Fact \ref{fact:5.1}, for $P$-almost every $\omega$ and every $y\in G_\omega$, the law $\quenchedP_\omega^y$ converges to Brownian Motion. We need to prove the same thing for the law $\quenchedP_\omega^0$. To this end we find a coupling $\mu$ between a random walk $\{X_n\}$ which is distributed according to $\quenchedP_\omega^0$ and a random walk $\{Y_n\}$ which is distributed according to $\quenchedP_\omega^y$, such that 
\[
\mu\left(
\exists_T\mbox{ s.t. }\forall_{n>T}X_n=Y_n
\right)=1.
\]
$\mu$ exists by Lemma \ref{lem:coup}.

Now, $\mu$-almost surely there exists (a random) $C$ such that for all $N$,
\[
\frac{\max\big(\|X_n-Y_n\|\ :\ 1\leq n\leq N\big)}{\sqrt{N}}\leq\frac{C}{\sqrt{N}}
\mathop{\longrightarrow}_{N\to\infty}0
\]
Thus the invariance principle for $\{Y_n\}$ implies an invariance principle for $\{X_n\}$ as well.
\end{proof}
}

\section{Proof of Theorem \ref{thm:main}}\label{sec:high_d}
In this section we prove Theorem \ref{thm:main}. This follows from two statements: the first is that there exists a unique measure $Q$ which is invariant w.r.t. the point of view of the particle and is absolutely continuous w.r.t. $P$, and the second is that for every $z\in\Z^d$ the random walk starting from $z$ a.s. reaches the support (which we define below) of this measure $Q$ within finite time.

\subsection{The support of a stationary measure}\label{sec:defforergod}
For a measure $Q$ which is invariant w.r.t. the point of view of the particle and is absolutely continuous w.r.t. $P$, we define 
\[\supp Q=\{\omega:\frac{dQ}{dP}(\omega)>0\},\]
where the derivative is the Radon-Nykodim derivative. This is well define up to a set of $P$-measure zero.

For an $\omega\in\Omega$ and a measurable set $A\subseteq\Omega$ we define $A_\omega = \{z\in\Z^d:\tau_{-z}(\omega)\in A\}$.
For improvement of notation we write $\supp_\omega Q$ for $(\supp Q)_\omega$.

\begin{claim}\label{lem:closedsupp}
For $P$-almost every $\omega$, every $z\in\Z^d$
 and every neighbor $e$ of the origin, if $z\in \supp_\omega Q$ and $\omega(z,e)>0$ then $z+e\in \supp_\omega Q$.
\end{claim}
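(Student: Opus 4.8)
The plan is to mimic the standard argument that the support of an invariant measure is "absorbing along open edges," exactly as was done in the (commented-out) proof of ergodicity in dimension $2$ in this paper. By translation invariance of the setup it suffices to treat the case $z=0$, $e=e_1$; the general case follows by applying the shift $\tau_{-z}$ and using that $Q$ is a measure on the shift-invariant space $\Omega$. So I want to show that, up to a $P$-null set, the event
\[
D=\bigl\{\omega\in\Omega:\ \tfrac{dQ}{dP}(\omega)>0,\ \omega(0,e_1)>0,\ \tfrac{dQ}{dP}(\tau_{-e_1}\omega)=0\bigr\}
\]
has $Q$-measure zero; since $Q\ll P$ and $\supp Q$ is defined only up to $P$-null sets, this is precisely the statement that $0\in\supp_\omega Q$ and $\omega(0,e_1)>0$ force $e_1\in\supp_\omega Q$ for $P$-a.e.\ $\omega$.

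The key step is to test the generator $L$ of the environment viewed from the particle, defined in \eqref{eq:genpart}, against the indicator $f=\mathbf 1_{\supp Q}$. Stationarity of $Q$ gives $\int_\Omega Lf\,dQ=0$. Writing out $Lf(\omega)=\sum_{e:\|e\|=1}\omega(0,e)\bigl[\mathbf 1_{\supp Q}(\tau_{-e}\omega)-\mathbf 1_{\supp Q}(\omega)\bigr]$ and integrating $dQ$: on the support of $Q$ the term $\mathbf 1_{\supp Q}(\omega)$ equals $1$, so $-\int_\Omega \mathbf 1_{\supp Q}(\omega)\sum_e\omega(0,e)\,dQ(\omega)=-\int_\Omega\bigl(\sum_e\omega(0,e)\bigr)dQ=-1$ since $\sum_e\omega(0,e)=1$; meanwhile $\int_\Omega\sum_e\omega(0,e)\mathbf 1_{\supp Q}(\tau_{-e}\omega)\,dQ(\omega)\le \int_\Omega\sum_e\omega(0,e)\,dQ=1$ trivially, with equality iff for $Q$-a.e.\ $\omega$ and every $e$ with $\omega(0,e)>0$ one has $\tau_{-e}\omega\in\supp Q$. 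Since $\int Lf\,dQ=0$ forces this sum to equal $1$, we conclude that for $Q$-a.e.\ $\omega$, whenever $\omega(0,e)>0$ then $\tau_{-e}\omega\in\supp Q$. Specialising to $e=e_1$ and unwinding the definition $\supp_\omega Q=\{z:\tau_{-z}\omega\in\supp Q\}$ gives the claim; the exceptional set is $P$-null because $Q\ll P$.

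I do not expect any serious obstacle here — this is the "easy half" of the ergodicity circle of ideas, and unlike the full ergodicity statement it needs no coupling and no percolation. The only point requiring a little care is the bookkeeping that translates a $Q$-a.s.\ statement about $\Omega$ into a $P$-a.s.\ statement about $\supp_\omega Q\subseteq\Z^d$: one fixes the coordinate shift, applies the displayed identity to the single direction $e=e_1$ at the origin, and then invokes shift-invariance of the construction (the generator \eqref{eq:genpart} commutes with $\tau$) to move the base point to an arbitrary $z$, with the union over the countably many pairs $(z,e)$ still being $P$-null. One should also note that $\supp Q$ is only defined up to $P$-measure zero, which is harmless since every statement is modulo $P$-null sets anyway.
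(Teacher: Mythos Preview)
Your argument is correct and essentially identical to the paper's: reduce to $z=0$, $e=e_1$ by shift invariance, test the generator against $f=\mathbf 1_{\supp Q}$, and use $\int Lf\,dQ=0$ to conclude $Q(D)=0$. One micro-quibble: the passage from $Q(D)=0$ to $P(D)=0$ is not ``because $Q\ll P$'' (that is the wrong direction) but because $D\subseteq\supp Q=\{dQ/dP>0\}$, on which $P$ and $Q$ are mutually absolutely continuous.
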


\begin{proof}
Due to shift invariance, it is sufficient to show this claim for $z=0$ and $e=e_1$. Let
$D=\{\omega\in \supp Q : \omega(0,e_1)>0 \mbox{ and } \tau_{-e_1}(\omega)\notin \supp Q\}$.
Then, for the generator $L$ of the process viewed from the point of view of the particle and the function $f={\bf 1}_{\supp Q}$, 
\[
0 = - \int LfdQ \geq \int_D\omega(0,e_1)dQ(\omega)
\]
The first equality follows from the stationarity of $Q$. This implies that $D$ is of measure zero, as desired.
\end{proof}

\subsection{Ergodicity}\label{sec:ergod}
In this subsection we prove that there exists a unique measure $Q$ which is invariant w.r.t. the point of view of the particle and is absolutely continuous w.r.t. $P$.
\begin{lemma}\label{lem:finQ}
For every
probability measure $Q$ which is stationary w.r.t. the point of view of the particle and is absolutely continuous w.r.t. $P$,
\[
P(\supp Q)>\Phi,
\]
where $\Phi$ is as in Corollary \ref{cor:phi}.
\end{lemma}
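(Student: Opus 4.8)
The plan is to transfer the finite-volume density lower bound of Corollary~\ref{cor:phi} (which bounds the size of the support of a stationary measure of the \emph{reflected} walk in the box $\Delta_N$) to a lower bound on $P(\supp Q)$, using the forward invariance of $\supp_\omega Q$ from Claim~\ref{lem:closedsupp} together with the spatial ergodic theorem for the i.i.d.\ (hence ergodic) measure $P$. First I would record that $P(\supp Q)>0$, since $Q$ is a probability measure with $Q\ll P$, and that by the multiparameter ergodic theorem applied to $P$ and the event $\supp Q$,
\[
\frac{|\supp_\omega Q\cap\Delta_N|}{|\Delta_N|}=\frac1{|\Delta_N|}\sum_{z\in\Delta_N}\mathbf 1_{\supp Q}(\tau_{-z}\omega)\longrightarrow P(\supp Q)\quad\text{as }N\to\infty
\]
for $P$-a.e.\ $\omega$; in particular $\supp_\omega Q\cap\Delta_N\neq\emptyset$ for all $N$ large. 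I would then fix such an $\omega$, also satisfying the almost sure conclusions of Claim~\ref{lem:closedsupp} and of Corollary~\ref{cor:phi}.

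The main step is to show that, for every large $N$, the set $\supp_\omega Q\cap\Delta_N$ is invariant under the reflected random walk on $\Delta_N$ in the environment $\omega$. Here I would use the description from Section~\ref{sec:perstat}: up to holding times this walk is a legal $\omega$-walk confined to $\Delta_N$, whose interior steps follow the kernel $\omega$ and whose only other moves are reflections at a face $\{z_i=0\}$ or $\{z_i=N-1\}$; such a reflection carries $z$ to $z+e_i$, resp.\ $z-e_i$, and occurs with positive probability only when $\omega(z,e_i)>0$, in which case $\omega(z,-e_i)=\omega(z,e_i)>0$ because $\omega$ is balanced, so the reflected move again goes along an $\omega$-open edge. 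Hence every non-holding step of the reflected walk takes $z$ to some $z\pm e$ with $\omega(z,e)>0$, and Claim~\ref{lem:closedsupp} shows that from a point of $\supp_\omega Q$ the walk can only reach points of $\supp_\omega Q\cap\Delta_N$; forward invariance then follows by iteration. I expect this bookkeeping around the reflection — checking that the reflected walk cannot leak out of $\supp_\omega Q$ — to be the only genuinely non-routine point, and it is precisely here that the balancedness of Assumption~\ref{ass:main} is used.

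To conclude: a non-empty forward-invariant set of a finite Markov chain contains a closed communicating class, so I would pick a recurrent class $R_N\subseteq\supp_\omega Q\cap\Delta_N$ of the reflected walk and let $\tilde Q_N$ be the stationary probability measure of that walk supported on $R_N$. Corollary~\ref{cor:phi} applied to $\tilde Q_N$ gives
\[
|\supp_\omega Q\cap\Delta_N|\;\ge\;|R_N|\;=\;|\supp\tilde Q_N|\;\ge\;\Phi\,|\Delta_N|
\]
for all $N$ large, whence, dividing by $|\Delta_N|$ and using the ergodic limit above, $P(\supp Q)\ge\Phi$. The strict inequality in the statement then comes essentially for free from the non-optimality of the constant: the constant $C$ in Lemma~\ref{lem:lporig} is a \emph{strict} upper bound for $\limsup_N\|dQ_N/dP_N\|_{\Delta_N,p'}$, so for all large $N$ one in fact has $\|dQ_N/dP_N\|_{\Delta_N,p'}\le\tilde C$ for some fixed $\tilde C<C$, and then the computation in the proof of Corollary~\ref{cor:phi} yields $|\supp_\omega Q\cap\Delta_N|/|\Delta_N|>\tilde C^{-p''}$ with $\tilde C^{-p''}>C^{-p''}=\Phi$ a fixed constant; passing to the limit gives $P(\supp Q)\ge\tilde C^{-p''}>\Phi$.
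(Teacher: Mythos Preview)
Your approach is essentially the paper's: use Claim~\ref{lem:closedsupp} to see that $\supp_\omega Q\cap\Delta_N$ is forward invariant for the reflected walk, pick a stationary measure supported there, apply Corollary~\ref{cor:phi}, and pass to the limit via the ergodic theorem.

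Two minor remarks. First, in the paper's reflected walk an attempt to step out of $\Delta_N$ becomes a \emph{holding time}, not a bounce to the opposite neighbor; hence every non-holding step is already a legal $\omega$-step and Claim~\ref{lem:closedsupp} applies directly --- your balancedness argument is unnecessary (though harmless). Second, the paper's own proof only yields $P(\supp Q)\ge\Phi$, and this is all that is used downstream; your attempt to upgrade to the strict inequality via a fixed $\tilde C<C$ is not quite justified, since Lemma~\ref{lem:lporig} gives $\|dQ_N/dP_N\|_{p'}<C$ for each large $N$ but not a uniform bound by some $\tilde C<C$, so after taking $N\to\infty$ you still only get $\ge\Phi$.
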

\begin{proof}

By Claim \ref{lem:closedsupp}, $\supp_\omega(Q)$ is closed under the random walk (i.e. if $z\in\supp_\omega(Q)$ then $\quenchedP_\omega^{z}(\forall_n X_n\in \supp_\omega(Q))=1$) and therefore $\supp_\omega(Q)\cap \Delta_N$ is closed under the reflected random walk in $\Delta_N$. Therefore, for every $N$, there exists a stationary measure $Q_N$ which is supported on $\supp_\omega(Q)\cap \Delta_N$ and by Corollary \ref{cor:phi},
$P$-a.s. for all $N$ large enough $
|\supp Q_N| \geq \Phi|\Delta_N|.
$
Therefore
by the Ergodic Theorem $P(\supp(Q))\geq \Phi$.
 
\end{proof}

\begin{corollary}\label{cor:finmes}
There are finitely many probability measures that are stationary and ergodic w.r.t. the point of view of the particle and are absolutely continuous w.r.t. $P$. Further more, every $Q$ which is stationary w.r.t. the point of view of the particle and is absolutely continuous w.r.t. $P$, is a convex combination of these ergodic measures.
\end{corollary}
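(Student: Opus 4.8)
The plan is to read this off from Lemma~\ref{lem:finQ} together with the ergodic decomposition of the chain ``environment viewed from the particle''. Let $\mathcal{Q}$ denote the convex set of Borel probability measures on $\Omega$ that are stationary for this chain and absolutely continuous with respect to $P$; it is non-empty by Proposition~\ref{prop:exist}. Two standard facts about stationary measures of a (Feller) Markov chain will be used: a stationary measure is ergodic precisely when it is an extreme point of the set of stationary measures; and two distinct ergodic stationary measures $m_1,m_2$ are mutually singular (apply the ergodic theorem for the chain to a bounded $f$ with $\int f\,dm_1\neq\int f\,dm_2$: from $m_i$-a.e.\ starting point the time averages of $f$ converge a.s.\ to $\int f\,dm_i$, so $m_1$ and $m_2$ are carried by disjoint Borel sets). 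Since in this paper $\supp m=\{\tfrac{dm}{dP}>0\}$, mutual singularity of $m_1,m_2\in\mathcal{Q}$ translates into $P(\supp m_1\cap\supp m_2)=0$.

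Now for finiteness: by Lemma~\ref{lem:finQ} every ergodic $m\in\mathcal{Q}$ has $P(\supp m)>\Phi$ with $\Phi>0$ the constant of Corollary~\ref{cor:phi}, and we have just seen that distinct such measures have $P$-essentially disjoint supports. As $P$ is a probability measure this allows at most $\lfloor 1/\Phi\rfloor$ of them; enumerate them as $Q_1,\dots,Q_k$.

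For the convex-combination statement, let $Q\in\mathcal{Q}$ be arbitrary. By the ergodic decomposition theorem for Feller chains on the compact metric space $\Omega$ there is a probability measure $\mu_Q$ on the ergodic stationary measures with $Q=\int m\,d\mu_Q(m)$. The key point is that $Q\ll P$ forces $\mu_Q$-a.e.\ $m$ to be absolutely continuous with respect to $P$: writing the (Borel-measurable) Lebesgue decomposition $m=m^a+m^s$ with respect to $P$, one has $Q=\int m^a\,d\mu_Q+\int m^s\,d\mu_Q$, where the first term is $\ll P$ (every $m^a$ annihilates any fixed $P$-null set) and, after a measurable selection of singular carriers, the second term is $\perp P$; since $Q\ll P$ this forces $\int m^s\,d\mu_Q=0$, i.e.\ $m^s=0$ for $\mu_Q$-a.e.\ $m$. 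Hence $\mu_Q$ is carried by the ergodic measures lying in $\mathcal{Q}$, i.e.\ by $\{Q_1,\dots,Q_k\}$, and the integral collapses to $Q=\sum_{i=1}^k\mu_Q(\{Q_i\})\,Q_i$; in particular $\mathcal{Q}\neq\emptyset$ forces $k\geq1$, and the two assertions of the corollary follow.

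I expect the measure-theoretic step in the previous paragraph --- passing from the integral ergodic decomposition to ``$\mu_Q$-a.e.\ component is absolutely continuous'', i.e.\ the measurable Lebesgue decomposition along the ergodic decomposition --- to be the only real point needing care. A route that avoids it: upgrade Lemma~\ref{lem:lporig} to a uniform bound $\sup\{\|\tfrac{dm}{dP}\|_{L^{p'}(P)}:m\in\mathcal{Q}\}<\infty$ (by pushing an arbitrary $m\in\mathcal{Q}$ down to the finite cubes and invoking Lemma~\ref{lem:lporig}); then $\mathcal{Q}$ is a weak-$\ast$ compact convex set (a bounded ball in the reflexive space $L^{p'}(P)$ intersected with the weak-$\ast$ closed set of stationary measures), its extreme points are exactly $Q_1,\dots,Q_k$, and a compact convex set with finitely many extreme points equals their convex hull by Krein--Milman. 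In either approach Claim~\ref{lem:closedsupp} --- each $\supp_\omega Q_i$ is absorbing for the walk --- is what guarantees that $Q$ restricted to the disjoint sets $\supp Q_i$ again produces stationary measures, should one prefer to argue by restriction and then rule out a leftover piece supported off $\bigcup_i\supp Q_i$.
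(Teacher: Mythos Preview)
Your argument is correct and is precisely the standard reasoning the paper leaves implicit: the corollary is stated without proof, immediately after Lemma~\ref{lem:finQ}, and your derivation (disjoint supports of distinct ergodic elements plus the uniform lower bound $P(\supp m)>\Phi$ for finiteness, ergodic decomposition for the convex-combination part) is exactly what the authors have in mind. The one technical point you flag---that $Q\ll P$ forces $\mu_Q$-almost every ergodic component to be $\ll P$---is a known fact for Markov chains on standard Borel spaces and is all that is needed; your alternative via a uniform $L^{p'}$ bound and Krein--Milman also works but is more than the paper intends.
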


We now study the connectivity structure of $\supp_\omega Q$ for $Q$ ergodic. We start with a definition and then state and prove a few lemmas.
\begin{definition}\label{def:connected}
For $\omega\in\Omega$ and $x,y\in\Z^d$, we denote by  
$
x\connom y
$
the occurrence
\[
\quenchedP_\omega^x(\exists_n X_n=y)>0.
\]
We say that a set $A\subseteq\Z^d$ is {\em strongly connected} w.r.t. $\omega$ if for every $x$ and $y$ in $A$, 
$
x\connom y.
$
A set $A\subseteq\Z^d$ is called a {\em sink} w.r.t. $\omega$ if it is strongly connected and 
$x \hspace{-0.15cm} \not\connom y$ for every $x\in A$ and $y\notin A$.

\end{definition}

\begin{proposition}\label{prop:connected}
There exists $\kappa>0$ such that for every probability measure $Q$ which is stationary and ergodic w.r.t. the point of view of the particle and is absolutely continuous w.r.t. $P$,
 for $P$-a.e. $\omega$, $\supp_\omega Q$ contains a subset $A$ which is a sink w.r.t. $\omega$ and has upper density at least $\kappa$, i.e.  
\[
\limsup_{N\to\infty} \frac{\big|A\cap[-N,N]^d\big|}{\big|[-N,N]^d\big|}\geq\kappa.
\]
\end{proposition}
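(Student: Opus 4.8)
The plan is to use the density lower bound for $\supp_\omega Q$ established in Section~\ref{sec:perstat}, together with the communicating‑class (``sink'') structure of the environment and a Burton--Keane type uniqueness argument.

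\emph{Setup.} Writing $\phi=\frac{dQ}{dP}$, the translation invariance of $P$ turns stationarity of $Q$ into the statement that $m_\omega(z):=\phi(\tau_{-z}\omega)$ is a (generally infinite) stationary measure for the quenched walk, with $\{m_\omega>0\}=\supp_\omega Q$; the construction of $Q$ (Proposition~\ref{prop:exist}, via Lemma~\ref{lem:lporig}) gives a uniform bound $\|\phi\|_{\Delta_N,p'}\le C$, so by the ergodic theorem $\supp_\omega Q$ has density $P(\supp Q)>\Phi$, and by H\"older, for any translation‑equivariant $A\subseteq\supp_\omega Q$,
\[
\frac1{|\Delta_N|}\sum_{z\in A\cap\Delta_N}m_\omega(z)\le C\Big(\frac{|A\cap\Delta_N|}{|\Delta_N|}\Big)^{1/p''},\qquad \tfrac1{p'}+\tfrac1{p''}=1.
\]
Also, any stationary measure for the reflected walk on $\Delta_N$ is carried by a union of closed communicating classes, so Corollary~\ref{cor:phi} says that \emph{each} closed communicating class of the reflected walk on $\Delta_N$ has at least $\Phi|\Delta_N|$ points; in particular there are at most $\lfloor1/\Phi\rfloor$ of them. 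By Claim~\ref{lem:closedsupp}, every closed communicating class meeting $\supp_\omega Q$ lies inside it.

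\emph{The walk reaches a sink.} Along the quenched walk from any $z\in\supp_\omega Q$ the communicating classes $[X_0]\succeq[X_1]\succeq\cdots$ form a non‑increasing chain in the reachability order (once the walk leaves a class it never comes back). I would show this chain $P$‑a.s.\ stabilizes, i.e.\ $X_n$ is eventually confined to one class, necessarily a sink; this is where the martingale structure is essential (a walk that changed class infinitely often would leave every finite region without ever returning, against the martingale property of $\{X_n\}$, together with the maximum principle of Section~\ref{sec:max_princ}). Since ``$0$ lies in a sink'' defines an event $E$ whose indicator is non‑decreasing along the chain $\bar\omega_n$, and since $Q$ is stationary and ergodic, $Q(E)\in\{0,1\}$; the preceding sentence forces $Q(E)=1$. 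Hence $S_{\mathrm{rec}}(\omega):=\bigcup\{\text{sinks of }\omega\}$ satisfies ${\bf 1}_{S_{\mathrm{rec}}}(z)={\bf 1}_E(\tau_{-z}\omega)$ and, by the ergodic theorem, has density $P(E)\ge P(\supp Q)>\Phi$.

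\emph{A single sink of positive density.} Decompose $S_{\mathrm{rec}}(\omega)$ into its sinks $A^{(1)},A^{(2)},\dots$. By the previous step $\sum_j\frac1{|\Delta_N|}\sum_{z\in A^{(j)}\cap\Delta_N}m_\omega(z)\to1$, while the H\"older bound gives $\frac1{|\Delta_N|}\sum_{z\in A^{(j)}\cap\Delta_N}m_\omega(z)\le C\big(|A^{(j)}\cap\Delta_N|/|\Delta_N|\big)^{1/p''}$; if all sinks had upper density $0$, carrying the total mass $\approx1$ would require the number of contributing sinks to diverge. I would exclude this by the Burton--Keane scheme applied to the ``$\omega$‑connected'' relation: three disjoint sinks each of density $\ge\epsilon$ force a positive density of trifurcation sites, which is impossible; since only the \emph{weak} finite‑energy property is available, the deficit is paid using the lower bound $|\supp Q_N|\ge\Phi|\Delta_N|$ on the recurrent set. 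Thus boundedly many sinks carry all but a vanishing density, and one of them has upper density $\ge\kappa$ for a constant $\kappa>0$ depending only on $P$; it lies in $\supp_\omega Q$ by Claim~\ref{lem:closedsupp}. \textbf{The main obstacle} is the claim that the martingale walk a.s.\ reaches a sink (equivalently, that the descending chain of communicating classes terminates), followed by making the Burton--Keane rerouting argument run with only the weak finite‑energy condition, which is exactly where the density estimates of Corollary~\ref{cor:phi} are consumed.
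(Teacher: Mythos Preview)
Your proposal is substantially different from the paper's proof, and it has a genuine gap at precisely the step you yourself flag as the main obstacle.

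\textbf{The gap.} Your central move is to show that the chain of communicating classes $[X_0]\succeq[X_1]\succeq\cdots$ stabilizes, arguing that a walk changing class infinitely often ``would leave every finite region without ever returning, against the martingale property.'' Neither half of this is justified. First, classes need not be large: in a balanced non-elliptic environment you can have many one-point classes (e.g.\ $\omega(z,\pm e_1)=\tfrac12$, $\omega(z+e_1,\pm e_2)=\tfrac12$ gives $z\connom z+e_1$ but not conversely), so switching class at every step is compatible with moving by $1$ each time. Second, even if the walk did escape every finite box, this is not incompatible with being a bounded-increment martingale: in $d\ge3$ the balanced walk is transient (the paper's own mean value inequality is used later precisely to establish transience-type facts). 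So your argument that $Q(E)=1$ rests on an unproven assertion; and without it, all you have is the zero--one alternative $Q(E)\in\{0,1\}$, which does not by itself show $Q(E)>0$. That positivity is exactly the content of Proposition~\ref{prop:connected}.

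\textbf{What the paper does instead.} The paper bypasses the ``walk reaches a sink'' question entirely at this stage (it is settled only later, in Lemma~\ref{lem:posdens}(\ref{item:allpnt}) and Proposition~\ref{prop:magia}, using Proposition~\ref{prop:connected} as input). Instead, the paper builds the sink directly from \emph{local} information. For each $K$ one defines an event $B_{K,\kappa}$: $0\in\supp_\omega Q$, and there is a set $A\subseteq[-K,K]^d\cap\supp_\omega Q$ of density $\ge\kappa$ which is strongly connected, contains $0$, and cannot be exited inside $[-K,K]^d$. The reflected walk on $\Delta_N$ has an ergodic stationary measure $Q_N$ supported inside $\supp_\omega Q$, and $\supp Q_N$ is a sink for the reflected walk with $|\supp Q_N|\ge\Phi|\Delta_N|$ by Corollary~\ref{cor:phi}. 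Partitioning $\Delta_N$ into $K$-cubes and counting shows $P(B_{K,\kappa})\ge\alpha$ uniformly in $K$. Hence $B_{K,\kappa}$ holds for infinitely many $K$ with probability $\ge\alpha$, and on this event $\bigcup_K A_K$ is a genuine sink (strongly connected through $0$; closed because any $y$ reachable from it lies in $A_{K'}$ for $K'>|y|$) of upper density $\ge\kappa$. Shift-invariance of the conclusion and ergodicity of $P$ upgrade the positive probability to $P$-a.s.

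\textbf{On Burton--Keane.} Your plan invokes Burton--Keane to bound the number of sinks. The paper does not use Burton--Keane here; the finiteness of sinks (Lemma~\ref{lem:posdens}(\ref{item:finmany})) follows immediately from the density lower bound, and Burton--Keane is reserved for the later, logically independent, uniqueness statement (Proposition~\ref{prop:unique}). Importing it at this stage both overshoots and, as you note, requires delicate handling of the weak finite-energy condition that is simply unnecessary for Proposition~\ref{prop:connected}.
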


\begin{proof}
%
%
%
%

For $P$-a.s. $\omega$ for all $N$ large enough the set $\Delta_N\cap\supp_\omega Q$ is non-empty and closed for the reflected random walk. Therefore there exists an ergodic measure $Q_N$ for the reflected random walk on $\Delta_N\cap\supp_\omega Q$. Note that $\supp Q_N$ satisfies three nice properties:
\begin{enumerate}
\item $|\supp Q_N|\geq\Phi|\Delta_N|$,
\item $\supp Q_N\subseteq\supp_\omega Q$, and
\item $\supp Q_N$ is a sink with respect to the reflected random walk under the environment $\omega$ on $\Delta_N$ (obviously, it cannot be a sink w.r.t. $\omega$ on the entire $\Z^d$).
\end{enumerate}

Fix $K\in\N$ and $\kappa>0$. We now define an event $B_{K,\kappa}$ as follows:
$B_{K,\kappa}$ is the event that the following things occur:
\begin{enumerate}
\item $0\in\supp_\omega Q$ (note that this is the same as $\omega\in \supp Q$).
\item There exists a set $A\subseteq [-K,K]^d\cap \supp_\omega Q$ such that
\begin{enumerate}
\item $|A|\geq\kappa|[-K,K]^d|$.
\item $0\in A$. In addition, $0\connom x$ and $x\connom 0$ for every $x\in A$.
\item $x \hspace{-0.15cm} \not\connom y$ for every $x\in A$ and $y\in[-K,K]^d\setminus A$.
\end{enumerate}
\end{enumerate}

\begin{claim}\label{claim:boccurs}
There exists $\alpha>0$ and $\kappa>0$ such that $P(B_{K,\kappa})>\alpha$ for all $K\geq 1$.
\end{claim}
We postpone the proof of Claim \ref{claim:boccurs}

Let $B_\kappa:=\{B_{K,\kappa}\mbox{ occurs for infinitely many values of $K$}\}$.
Using Claim \ref{claim:boccurs},
 $P(B_\kappa)\geq\alpha$.

On the event $B_\kappa$, for every $K$ such that $B_{K,\kappa}$ occurs, let $A_K$ be the appropriate set. Then 
\[
\bigcup_{K:B_{K,\kappa}\mbox{ occurs}} A_K
\]
is a sink as required.

\end{proof}
\begin{proof}[Proof of Claim \ref{claim:boccurs}]
By Corollary \ref{cor:phi}, for every $N$ large enough there is a stationary measure $Q_N$ for the reflected random walk on $\Delta_N\cap\supp_\omega Q$ which is ergodic and such that $|\supp Q_N|>\Phi|\Delta_N|$. Fix some $\gamma$ and $\beta$ strictly between $0$ and $\Phi/2$. Take $N$ large which is divisible by $K$, and divide $\Delta_N$ into disjoint cubes $D_1,\ldots,D_{(N/K)^d}$.

For a cube $D_k$, we say that $D_k$ is good if at least $\beta|D_k|$ of the points in $D_k$ belong to $\supp Q_N$. We claim that at least proportion $\gamma$ of the cubes are good. Indeed, otherwise we get
$|\supp Q_N| \leq \gamma K^d(N^d/K^d) + \beta K^d(N^d/K^d) \leq (\gamma+\beta)N^d < \Phi|\Delta_N|$ which is a contradiction.

Now, by the ergodic theorem,
\[
P(B_{K,\kappa})
=\lim_{N\to\infty}\frac{1}{|\Delta_N|}\sum_{z\in\Delta_N}{\bf 1}_{B_{K,\kappa}}(\tau_{-z}(\omega))
\]
Now note that if we choose $\kappa=\beta\cdot 2^{-d}$, then $\tau_{-z}(\omega)\in B_{K,\kappa}$ for every $z$ which is in the intersection of $\supp H_N$ and a good cube. In this case, the set $A$ is simply the intersection of $H_N$ and $z+[-K,K]^d$. Now take $\alpha=\gamma\beta$. Then for all $N$ large enough which is divisible by $K$,

\[
\lim_{N\to\infty}\frac{1}{|\Delta_N|}\sum_{z\in\Delta_N}{\bf 1}_{B_{K,\kappa}}(\tau_{-z}(\omega))
\geq\alpha
\]
and therefore $P(B_{K,\kappa})\geq\alpha$.

\end{proof}

\begin{lemma}\label{lem:posdens}
\begin{enumerate}
\item\label{item:posdens}
For $P$-almost every $\omega$, every sink has lower density at least $\Phi/2^d$.
\item\label{item:finmany}
For every ergodic $Q$ which is invariant w.r.t. the point of view of the particle and is absolutely continuous w.r.t. $P$, $P$-a.s. there are only finitely many sinks contained in $\supp_\omega Q$.
\item\label{item:allpnt} $P$-a.s., every point in $\supp_\omega Q$ is contained in a sink.
\end{enumerate}
\end{lemma}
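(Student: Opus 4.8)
The plan is to prove the three items in turn: (1) and (2) rest only on Corollary~\ref{cor:phi} plus an elementary counting argument, while (3) uses the stationarity and ergodicity of $Q$ together with Proposition~\ref{prop:connected}. For item (1) the point is that a sink $A$ is closed under the random walk, and consequently $A\cap\Lambda$ is closed (absorbing) for the reflected random walk on any cube $\Lambda$ --- just as in the proof of Lemma~\ref{lem:finQ}, the reflected walk from a point $z$ either stays put or moves to some $z+e$ with $\omega(z,e)>0$ and $z+e\in\Lambda$, and such $z+e$ lies in $A$ whenever $z$ does. Hence, once $\Lambda\cap A\neq\emptyset$, the finite reflected-walk Markov chain on $\Lambda$ has a closed communicating class inside $A\cap\Lambda$, so it carries a stationary measure supported in $A\cap\Lambda$; by Corollary~\ref{cor:phi} (applied to $\Delta_M$, of which $[-N,N]^d$ is a translate for $M=2N+1$, using shift-invariance of $P$) that support has at least $\Phi|\Lambda|$ points. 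Taking $\Lambda=[-N,N]^d$ and letting $N\to\infty$ gives $\liminf_N |A\cap[-N,N]^d|/|[-N,N]^d|\ge\Phi\ge\Phi/2^d$; the weaker constant is stated only for compatibility with Claim~\ref{claim:boccurs}.

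For item (2), distinct sinks are disjoint: if $x\in A\cap B$ with $A,B$ sinks, then from $x$ one reaches all of $A$ and all of $B$, so $B\subseteq A$ by the sink property of $A$ and likewise $A\subseteq B$, whence $A=B$. Thus if $A_1,\dots,A_k$ are distinct sinks then $\sum_{i=1}^{k}{\bf 1}_{A_i}\le 1$, and dividing by $|[-N,N]^d|$, summing, and using item (1) together with the superadditivity of $\liminf$ over finite sums gives $k\,\Phi/2^d\le 1$, so $k\le 2^d/\Phi$. Hence there are at most $2^d/\Phi$ sinks altogether, and a fortiori finitely many contained in $\supp_\omega Q$.

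For item (3), set $F=\{\omega:0\text{ lies in a sink of }\omega\}$. The structural fact is that $F$ is forward-invariant under the walk: if $\omega\in F$, say $0$ lies in the sink $A$, and $\omega(0,e)>0$, then $e$ is reachable from $0\in A$ hence $e\in A$, so $0\in A-e$, a sink of $\tau_{-e}\omega$, i.e.\ $\tau_{-e}\omega\in F$. It follows that $L{\bf 1}_F\ge 0$ everywhere and $L{\bf 1}_F\equiv 0$ on $F$; since $Q$ is stationary, $\int L{\bf 1}_F\,dQ=0$, whence $L{\bf 1}_F=0$ $Q$-a.s., and because $\sum_e\omega(0,e)=1$ and ${\bf 1}_F\le 1$, the identity $\sum_e\omega(0,e){\bf 1}_F(\tau_{-e}\omega)={\bf 1}_F(\omega)$ forces ${\bf 1}_F(\tau_{-e}\omega)={\bf 1}_F(\omega)$ for every $e$ with $\omega(0,e)>0$. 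So ${\bf 1}_F$ is an invariant function for the environment chain, and by ergodicity of $Q$, $Q(F)\in\{0,1\}$. To exclude $Q(F)=0$: by Proposition~\ref{prop:connected} $P$-a.s.\ $\supp_\omega Q$ contains a sink $A$, and for $z\in A$ we have $z\in\supp_\omega Q$ and $0\in A-z$, a sink of $\tau_{-z}\omega$, so $A\subseteq\{z:\tau_{-z}\omega\in F\cap\supp Q\}$, a set of lower density $\ge\Phi/2^d$ by item (1). The multidimensional ergodic theorem for the shift action on $(\Omega,P)$ identifies that density with $P(F\cap\supp Q)$, so $P(F\cap\supp Q)>0$; since $dQ/dP>0$ throughout $\supp Q=\{dQ/dP>0\}$, this gives $Q(F)>0$. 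Hence $Q(F)=1$: for $Q$-a.e.\ $\omega$, equivalently for $P$-a.e.\ $\omega\in\supp Q$, the origin lies in a sink. Finally, $\{\omega:\exists\,z\in\supp_\omega Q\text{ lying in no sink of }\omega\}=\bigcup_{z\in\Z^d}\tau_z\{\omega\in\supp Q:0\text{ lies in no sink}\}$ is a countable union of $P$-null sets, so $P$-a.s.\ every point of $\supp_\omega Q$ lies in a sink.

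The main obstacle is item (3): the crucial step is recognizing that ${\bf 1}_F$ is a bounded invariant function for the point-of-view-of-the-particle chain (using the forward-invariance of $F$ and the stationarity of $Q$), which reduces matters to the dichotomy $Q(F)\in\{0,1\}$, and then ruling out the trivial alternative $Q(F)=0$ by combining Proposition~\ref{prop:connected} with the ergodic theorem and item (1). Items (1) and (2) are routine once Corollary~\ref{cor:phi} is available.
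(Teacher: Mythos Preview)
Your proof is correct and follows the same line as the paper's: a sink supports a reflected-walk stationary measure in any large cube and hence has density bounded below via Corollary~\ref{cor:phi} for (1), disjointness plus (1) for (2), and forward-invariance of $\{0\text{ lies in a sink}\}$ under the walk together with ergodicity of $Q$ for (3), with $Q(F)>0$ supplied by Proposition~\ref{prop:connected}. Your write-up is more explicit than the paper's terse argument---in particular you spell out why forward-invariance plus stationarity of $Q$ forces full invariance of ${\bf 1}_F$, and how the ergodic theorem converts the sink's positive density into $P(F\cap\supp Q)>0$---and your direct application of Corollary~\ref{cor:phi} to the translate $[-N,N]^d$ in fact yields the sharper constant $\Phi$ rather than $\Phi/2^d$.
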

In other words, the lemma says that a.s. $\supp_\omega Q$ is a finite union of sinks, each of which has lower density at least $\Phi/2^d$.

\begin{proof}
Part \ref{item:posdens}: Let $S$ be a sink. Then for all $N$ large enough, $\Delta_N\cap S\neq\emptyset$. Therefore there is a stationary measure $H_N$ w.r.t. the reflected walk on $\Delta_N$ which is supported on $S$, and therefore, by Corollary \ref{cor:phi},
$|S\cap[-N,N]^d|\geq |S\cap\Delta_N|\geq |\supp H_N|\geq \Phi|\Delta_N|=(\Phi/2^d)|[-N,N]^d|$.
Part \ref{item:finmany} follows immediately from part \ref{item:posdens} and the fact that distinct sinks are disjoint. To see Part \ref{item:allpnt}, note that if $0$ is in a sink then any point reachable from $0$ is in a sink. Thus, if $A$ is the event that $0$ is in a sink, then $A$ is closed under the walk from the point of view of the particle. Therefore, $A\cap\supp Q$ is invariant under the walk from the point of view of the particle, and thus by ergodicity of $Q$, we get $Q(A)\in\{0,1\}$. Since we already proved $Q(A)>0$ we get $Q(A)=1$.
\end{proof}

\noindent
\begin{remark} In fact, we can also prove that $\supp_\omega Q$ is a sink (i.e. the finite number of sinks is one), but we do not do this now since we do not need it for our purposes.
\end{remark}

\begin{proposition}\label{prop:unique}
There exists a unique ergodic measure $Q$.
\end{proposition}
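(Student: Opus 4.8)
The plan is to argue by contradiction: assume there are at least two ergodic absolutely continuous stationary measures and rule this out using the sink structure built above together with a Burton--Keane type argument. Existence of an ergodic such measure is immediate: Proposition~\ref{prop:exist} produces an absolutely continuous stationary $Q$, and by Corollary~\ref{cor:finmes} it is a convex combination of finitely many ergodic absolutely continuous stationary measures, so at least one of these exists. Suppose then $Q_1,\dots,Q_m$ are all of them, with $m\ge 2$. Since distinct ergodic stationary measures of a Markov chain are mutually singular, and each $Q_j$ is absolutely continuous with respect to $P$, the Radon--Nikodym supports $G_j:=\supp Q_j=\{\omega: dQ_j/dP(\omega)>0\}$ are pairwise disjoint modulo $P$-null sets. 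By Lemma~\ref{lem:finQ}, $P(G_j)>\Phi>0$ for every $j$, so by ergodicity of $P$ under the shifts, $P$-a.s.\ every $G_{j,\omega}=\supp_\omega Q_j$ is nonempty (indeed of density $P(G_j)$), and by Claim~\ref{lem:closedsupp} each $G_{j,\omega}$ is closed under the walk.

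Next I would use the sink description. By Lemma~\ref{lem:posdens}, $P$-a.s.\ each $G_{j,\omega}$ is a finite union of sinks of $\omega$, every sink has lower density at least $\Phi/2^d$, and every point of $G_{j,\omega}$ lies in a sink contained in $G_{j,\omega}$. Since the $G_j$ are pairwise $P$-disjoint, $P$-a.s.\ the translates $G_{1,\omega},\dots,G_{m,\omega}$ are pairwise disjoint, so a sink of $\omega$ is contained in at most one $G_{j,\omega}$; call such a sink \emph{of type $j$} when it is contained in $G_{j,\omega}$. Since $G_{1,\omega}$ and $G_{2,\omega}$ are nonempty unions of sinks, $P$-a.s.\ $\omega$ possesses at least one type-$1$ sink and at least one type-$2$ sink, and these are disjoint subsets of $\Z^d$ each of lower density at least $\Phi/2^d$.

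The core of the argument is to contradict this $P$-a.s.\ coexistence of a type-$1$ and a type-$2$ sink. For each $j$ the number $n_j(\omega)$ of type-$j$ sinks of $\omega$ is a shift-invariant function (translating $\omega$ translates all sinks and preserves their types, because $G_{j,\tau_z\omega}=G_{j,\omega}+z$), hence $P$-a.s.\ equal to a constant $n_j$, and $n_j<\infty$ because distinct sinks are disjoint and each has density at least $\Phi/2^d$. I would then run a Burton--Keane / insertion-tolerance scheme: a local modification of $\omega$ on a bounded region --- admissible because $P$ is i.i.d.\ and genuinely $d$-dimensional, which is the weak substitute here for a finite-energy condition --- joins a type-$1$ sink to a type-$2$ sink, and this is incompatible with the $P$-a.s.\ constancy of $n_1$ and $n_2$; since only a weak form of finite energy is available, the counting is done not by counting open edges but via the density lower bound for sinks (Lemma~\ref{lem:posdens}(\ref{item:posdens})) together with Claim~\ref{claim:boccurs}, which guarantees uniformly in the box size $K$ a probability at least $\alpha$ that the origin lies in a $[-K,K]^d$-local sink contained in a prescribed $G_j$. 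I expect this percolation step --- making the surgery precise and closing the counting with only the weak finite-energy input --- to be the main obstacle; the rest is routine.

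Finally, once it is shown that $P$-a.s.\ all sinks of $\omega$ that have a type share one common type $j_0$ (necessarily $P$-a.s.\ constant, again by shift-invariance), Lemma~\ref{lem:posdens}(\ref{item:allpnt}) applied to $Q_i$ says every point of $G_{i,\omega}$ lies in a sink contained in $G_{i,\omega}$, which is then type $i$ and type $j_0$, forcing $i=j_0$ whenever $G_{i,\omega}\ne\emptyset$. Hence for $i\ne j_0$ the set $G_{i,\omega}$ is $P$-a.s.\ empty, contradicting $P(G_i)>\Phi>0$. Therefore $m=1$, i.e.\ the ergodic absolutely continuous stationary measure is unique, as asserted; combined with Fact~\ref{fact:5.1} and the fact (proved afterwards) that the walk from any site reaches $\supp_\omega Q$, this yields Theorem~\ref{thm:main}.
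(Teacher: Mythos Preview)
Your framework—assume two ergodic absolutely continuous stationary measures and derive a contradiction by a Burton–Keane style surgery, compensating for the lack of full finite energy with the density lower bound on sinks—matches the paper's. The divergence is in the shift-invariant quantity you track. You use the numbers $n_j$ of type-$j$ sinks; the paper instead uses
\[
\dist(Q_1,Q_2)\;=\;\min\bigl\{|z-w|:z\in\supp_\omega Q_1,\ w\in\supp_\omega Q_2\bigr\},
\]
a $P$-a.s.\ constant, and fixes $z,w$ realizing it with $\langle e_i,z-w\rangle\neq 0$ for some $i$. The surgery is then explicit and one-site: couple $(\omega,\omega')$ to agree off $z$, with $\omega(z,e_i)=0$ and $\omega'(z,e_i)>0$ (both conditionings have positive $\nu$-mass; the first because on the event $U=\{z\in\supp_\omega Q_1,\ w\in\supp_\omega Q_2\}$ minimality already forces $\omega(z,e_i)=0$). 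For $\omega'$ the event $U$ is impossible, since a step from $z$ in direction $\pm e_i$ would stay in $\supp_{\omega'}Q_1$ and strictly shrink the distance to $w$. On the other hand the density step—approximate $\supp Q_1$ by a finite-range event $A$, note that $A_\omega=A_{\omega'}$ far from $z$, and use that the sink through $z$ in $\omega$ has lower density at least $\Phi/2^d$—shows that on $\{\omega\in U\}$ some point $x$ of that sink survives in $\supp_{\omega'}Q_1$; since $x$ is $\omega$-connected to $z$ along a path where $\omega=\omega'$, also $z\in\supp_{\omega'}Q_1$, and symmetrically $w\in\supp_{\omega'}Q_2$. Hence $\omega'\in U$ with positive probability, a contradiction.

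The gap you flag is real, and with your invariant it is hard to close. Merging a type-$1$ and a type-$2$ sink by a local surgery does not produce a sink of either type (your sets $G_1,G_2$ are disjoint), so neither $n_1$ nor $n_2$ has a clean monotone response to the surgery; and because $\supp_{\omega'}Q_j$ is a non-local functional of $\omega'$, a one-site change could in principle alter the type of distant sinks, so you would also have to rule out that any far-away sink changes type. The minimal-distance invariant sidesteps all of this: the contradiction requires only \emph{one} point of $\supp_{\omega'}Q_1$ closer to $\supp_{\omega'}Q_2$ than the a.s.\ value, and the density bound delivers exactly that. I would redo the surgery around $\dist(Q_1,Q_2)$; the remainder of your outline (disjointness of supports, closure under the walk, the sink decomposition from Lemma~\ref{lem:posdens}) is correct and is what the paper uses as input.
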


In what follows we use the following notation: For a set $A\subseteq\Z^d$, we denote its lower density by $\underline\dens (A)$, and its density, if such exists, by $\dens(A)$.

\begin{proof}
We use an adaptation of the easy part of the percolation argument of Burton and Keane \cite{BuKe}. Even though the finite energy condition is not satisfied, a very similar yet slightly weaker condition holds. In combination with the positive density of sinks (Lemma \ref{lem:posdens} Part \ref{item:posdens}) we can produce the percolation argument.
Let $Q_1$ and $Q_2$ be two distinct ergodic measures. Define $\dist(Q_1,Q_2):=\min(|z-w|:z\in\supp_\omega Q_1, w\in\supp_\omega Q_2)$. Note that due to shift invariance it is a $P$-almost sure constant, and therefore $\omega$ is rightfully omitted from the notation.
Let $z$ and $w$ be two points such that $|z-w|=\dist(Q_1,Q_2)$, and such that the event $U=U(z,w)=\{z\in\supp_\omega Q_1, w\in\supp_\omega Q_2\}$ has a positive $P$ probability. Let $i$ be a direction s.t. $\langle e_i,z-w\rangle\neq 0$.
 Let $R$ be the following measure on $\Omega\times\Omega$: we sample $\omega$ and $\omega'$. for all $x\neq z$, we take $\omega(x)=\omega'(x)$ to be sampled i.i.d. according to $\nu$. We then take $\omega(z)\sim (\nu|\omega(e_i)=0)$ and $\omega'(z)\sim (\nu|\omega(e_i)\neq  0)$. Again, everything is independent. Let $P_1$ be the distribution of $\omega$ and $P_2$ be the distribution of $\omega'$. Note that $P_1$ and $P_2$ are both absolutely continuous w.r.t. $P$, and that $P_1(U)>0$ and $P_2(U)=0$. Now let $\epsilon<\Phi/2^{d+5}$, and let $A\subseteq\Omega$ be an approximation of $\supp Q_1$, i.e. $P(A\bigtriangleup\supp Q_1)<\epsilon$ and $A\in\sigma(\omega(x):|x|<K)$ for some finite $K$. 
Now, for all $x$ s.t. $|x-z|>K$, we have that $x\in A_\omega$ if and only if $x\in A_{\omega'}$. Since both $P_1$ and $P_2$ are absolutely continuous w.r.t. $P$, we get that $R$ almost surely, by the ergodic theorem,
$\dens(A_\omega-\supp_\omega Q_1)=\dens(A_{\omega'}-\supp_{\omega'} Q_1)<\epsilon$ and equivalently $\dens(\supp_\omega Q_1-A_\omega)=\dens(\supp_{\omega'} Q_1-A_{\omega'})<\epsilon$. Therefore, a.s. conditioned on the event $\omega\in U$, we get $\dens(\supp_\omega Q_1-\supp_{\omega'} Q_1)<2\epsilon<\underline{\dens}(S_\omega)$ where $S_\omega$ is the sink containing $z$ in $\omega$. Therefore, $R$-a.s. on $\omega\in U$ there exist a point $x$ in $\supp_{\omega'}Q_1$ such that $x\connom z$. But then we also get $x\connomp z$, and thus $z\in\supp_{\omega'}Q_1$. Equivalently we get that $w\in\supp_{\omega'}Q_2$, and therefore $P_2(U)>0$ which is a contradiction. Therefore there exists a unique ergodic measure.
\end{proof}

\subsection{The probability of hitting $\supp_\omega Q$}\label{sec:hitQ}

In this subsection we show that with probability $1$ the walk has to hit $\supp_\omega Q$.

\begin{lemma}\label{lem:gtzero}
Let $Q$ be the probability measure which is stationary w.r.t. the point of view of the particle and is absolutely continuous w.r.t. $P$. Then for $P$-a.e. $\omega$ and every $z\in\Z^d$,
\[
\quenchedP_\omega^z\left(
\exists_N \mbox{ s.t. } \forall_{n>N} X_n\in\supp_\omega(Q)
\right)>0.
\]
\end{lemma}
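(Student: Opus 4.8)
The plan is to recast the statement as a property of an $L_\omega$--harmonic function and then play it off against the mean value inequality together with the positive density of $\supp_\omega Q$. Put
\[
u(x)=u_\omega(x):=\quenchedP_\omega^x\big(X_n\notin\supp_\omega Q\ \text{ for all }n\big).
\]
Since $\supp_\omega Q$ is absorbing for the walk (Claim \ref{lem:closedsupp}), the event in the statement has the same probability as $\{\exists n\ X_n\in\supp_\omega Q\}$, so it is enough to show $u(z)<1$ for every $z$ and $P$--a.e.\ $\omega$. A one--step decomposition shows $L_\omega u\equiv 0$ on $\Z^d$: at $x\notin\supp_\omega Q$ this is the Markov property, and at $x\in\supp_\omega Q$ one has $u(x)=0$ while $u(x+e)=0$ for every $e$ with $\omega(x,e)>0$ (again Claim \ref{lem:closedsupp}), so $\quenchedE_\omega^x u(X_1)=0=u(x)$. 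Thus $0\le u\le 1$, $L_\omega u\equiv 0$, and $u\equiv 0$ on $\supp_\omega Q$. Finally, by Lemma \ref{lem:posdens} each sink --- in particular $\supp_\omega Q$ --- has lower density at least $\Phi/2^d$ about the origin; by the shift invariance of $P$ and a countable intersection over base points, $P$--a.s.\ there is a $\delta>0$ so that $\supp_\omega Q$ has lower density $\ge\delta$ as seen from every $x_0\in\Z^d$.

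Now fix $\sigma=\tfrac12$ and $p\in(0,d]$ to be chosen. Applying Theorem \ref{thm:mean_value} to $u$ on $B_N(x_0)$ (licit for all large $N$, since $L_\omega u\equiv 0$) gives
\[
u(x_0)\le\max_{B_{\sigma N}(x_0)}u\le C\,\|u^+\|_{B_N(x_0),p}.
\]
Since $u=0$ on $\supp_\omega Q$ and $u\le M:=\sup_{\Z^d}u$, the right-hand side is at most $C\,M\big(|B_N(x_0)\setminus\supp_\omega Q|/|B_N(x_0)|\big)^{1/p}$; letting $N\to\infty$ and using the density lower bound, $u(x_0)\le C\,M(1-\delta)^{1/p}$ for every $x_0$, and taking the supremum over $x_0$ yields $M\le C(\sigma,p,d)(1-\delta)^{1/p}M$. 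If $p$ is taken small enough that $C(\sigma,p,d)(1-\delta)^{1/p}<1$, then $M=0$, which gives $u(z)<1$ (indeed $u\equiv 0$). If one prefers to prove only the stated inequality without over--shooting, one runs exactly the same estimate for the $L_\omega$--harmonic functions $w_N(x)=1-\quenchedP_\omega^x(\text{hit }\supp_\omega Q\ \text{before exiting }B_N)$, obtaining $u(z)\le w_N(z)\le C(1-\delta)^{1/p}<1$.

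I expect the main obstacle to be precisely the feasibility of this choice of $p$: one must check that the constant $C(\sigma,p,d)$ in the mean value inequality does not grow faster than $(1-\delta)^{-1/p}$ as $p\downarrow 0$, so that $C(\sigma,p,d)(1-\delta)^{1/p}<1$ for $p$ small --- a quantitative point about the proof of Theorem \ref{thm:mean_value} (equivalently Theorem 12 of \cite{GZ}). Should this be awkward, the alternative is an invariant--measure argument: if $P(\{u_\omega(0)=1\})>0$ then $F_\omega:=\{x:u_\omega(x)=1\}$ is nonempty, absorbing for the walk (from $u_\omega(x)=\sum_e\omega(x,e)u_\omega(x+e)\le 1$), disjoint from $\supp_\omega Q$, and by Corollary \ref{cor:phi} of density $\ge\Phi$; feeding stationary measures of the reflected walk on $F_\omega\cap\Delta_N$ into the compactness construction of Proposition \ref{prop:exist} produces a measure $\hat Q\ll P$ invariant for the point of view of the particle, which by Corollary \ref{cor:finmes} and Proposition \ref{prop:unique} must equal $Q$; but each approximant is carried by shifts $\tau_{-x}\omega$ with $x\in F_\omega$, hence off $\supp Q$, forcing $\hat Q(\supp Q)<1=Q(\supp Q)$, a contradiction --- the one delicate point being to push ``carried off $\supp Q$'' through the weak limit, which is handled by the simultaneous cylinder--approximation trick (with respect to $P+\hat Q$) already used in the proof of Proposition \ref{prop:exist}.
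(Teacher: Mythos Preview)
Your ``alternative'' route is essentially the paper's proof. Writing $S=\{\omega:u_\omega(0)=1\}$ (your $F_\omega$ is $S_\omega$), the paper observes that $S_\omega$ is absorbing and disjoint from $\supp_\omega Q$, builds stationary measures $H_N$ for the reflected walk in $\Delta_N$ supported on $S_\omega\cap\Delta_N$ with uniformly bounded $L^{p'}$ density, passes to a weak subsequential limit $H$, and invokes Proposition~\ref{prop:unique} to force $H=Q$. The contradiction then comes exactly through the cylinder--approximation step you flag: approximate $S$ by a cylinder event $A$ with $P(A\bigtriangleup S)<\epsilon$; the uniform $L^{p'}$ bound (together with the ergodic theorem on $\omega$) forces each $H_N$, and hence $H$, to put nearly all its mass on $A$, whereas $Q(A)=Q(A\setminus S)$ can be made small since $Q(S)=0$ and $Q\ll P$. (A cosmetic point: $\supp_\omega Q$ is a finite \emph{union} of sinks rather than a single one at this stage, but the density bound you need follows directly from Lemma~\ref{lem:finQ} and the ergodic theorem.)

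Your first approach overshoots --- it would establish Proposition~\ref{prop:magia} outright --- and the obstacle you identify, namely whether $C(\sigma,p,d)(1-\delta)^{1/p}<1$ for some admissible $p$, is real and is precisely why the paper splits the argument into two steps. In the proof of Proposition~\ref{prop:magia} the mean value inequality is applied with a \emph{fixed} exponent ($p=1$); the trick is that, once Lemma~\ref{lem:gtzero} guarantees $h<1$ everywhere, one can look at $h_\eta=\eta+h-1$ and note that $P(h_\eta(0)>0)=P(h(0)>1-\eta)\to 0$ as $\eta\downarrow 0$. This shrinks the density factor on the right-hand side of the mean value estimate without ever sending $p\to 0$, so the unknown behaviour of $C(\sigma,p,d)$ for small $p$ is simply not needed. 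Your one-step route would go through if one knew $C(\sigma,p,d)$ grows subexponentially in $1/p$, but neither this paper nor \cite{GZ} proves such a bound, and the two-step structure is designed to avoid relying on it.
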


\begin{proof}
Assume for contradiction that there exists $B\subseteq\Omega$ such that $P(B)>0$ and for $\omega\in B$, there exists $z\in\Z^d$ such that
\[
\quenchedP_\omega^z\left(
\exists_N \mbox{ s.t. } X_N \in\supp_\omega(Q)
\right)=0.
\]
Then there exists $S\subseteq\Omega$ with $P(S)>0$ such that for every $\omega\in S$,
\[
\quenchedP_\omega^0\left(
\exists_N \mbox{ s.t. } X_N \in\supp_\omega(Q)
\right)=0.
\]

For every $\epsilon>0$ there exist $K\in\N$ and $A\subseteq\Omega$ such that $A$ is measurable w.r.t. $\sigma(\omega(z):\|z\|<K)$ and $P(A\bigtriangleup S)<\epsilon$.

$S$ is closed under the random walk and therefore $S \cap \Delta_N$ is closed under the reflected random walk in $\Delta_N$. Therefore, for every $N$, there exists a stationary measure $H_N$ which is supported on $S\cap \Delta_N$ and $P$-a.s. for all $N$ large enough satisfies $\left\|\frac{dH_N}{d{P_N}}\right\|_{\Delta_N,p}<C'$ for some $C'$.
Also, for $N$ large enough, $H_N(\tau_{-z}(\omega^{(N)}\notin A)< 2C'\epsilon$. As in Subsection \ref{sec:stnons}, let $H$ be a subsequential limit of $H_N$. Then $H$ is stationary w.r.t. the point of view of the particle. By Proposition \ref{prop:unique}, $H=Q$. In addition, $P(\supp(H))>C$ and $P(\supp(H)\setminus A)<2C'\epsilon$. However, 
$P(A\cap\supp(Q))\leq P(A\setminus S)\leq\epsilon$, which is clearly a contradiction.

\end{proof}

\begin{proposition}\label{prop:magia}
Let $Q$ be the probability measure which is stationary w.r.t. the point of view of the particle and is absolutely continuous w.r.t. $P$. Then for $P$-a.e. $\omega$ and every $z\in\Z^d$,
\[
\quenchedP_\omega^z\left(
\exists_N \mbox{ s.t. } \forall_{n>N} X_n\in\supp_\omega(Q)
\right)=1.
\]
\end{proposition}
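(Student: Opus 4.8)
The plan is to reduce Proposition~\ref{prop:magia} to a Liouville‑type statement for a bounded $\omega$‑harmonic function, and then extract that statement from the mean value inequality (Theorem~\ref{thm:mean_value}) combined with the density lower bound on sinks (Lemma~\ref{lem:posdens}, Corollary~\ref{cor:phi}).

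Define, for $P$‑a.e.\ $\omega$ and $z\in\Z^d$,
\[
u(\omega,z):=\quenchedP_\omega^z\bigl(X_n\notin\supp_\omega Q\ \text{for all }n\bigr).
\]
By Claim~\ref{lem:closedsupp} the set $\supp_\omega Q$ is absorbing for the walk, so the event in Proposition~\ref{prop:magia} coincides with $\{\exists N:\ X_N\in\supp_\omega Q\}$, and it suffices to prove $u(\omega,0)=0$ for $P$‑a.e.\ $\omega$. First I would record the elementary properties of $u$: it takes values in $[0,1]$; it vanishes on $\supp_\omega Q$; it is translation covariant, $u(\omega,z)=u(\tau_{-z}\omega,0)$; and, using Claim~\ref{lem:closedsupp} once more to handle the vertices of $\supp_\omega Q$, it is $\omega$‑harmonic on all of $\Z^d$, i.e.\ $L_\omega u\equiv 0$ with $L_\omega$ as in Theorem~\ref{thm:mean_value}. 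Lemma~\ref{lem:gtzero} already gives $u(\omega,0)<1$ a.s.; the goal is the stronger $u\equiv 0$.

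Next I would set up the iteration. By Lemma~\ref{lem:posdens}(\ref{item:posdens},\ref{item:allpnt}), $P$‑a.s.\ $\supp_\omega Q$ is a nonempty finite union of sinks, each of lower density at least $\Phi/2^d$. Translating this by $z_0$ (if $S$ is a sink for $\omega$ then $S-z_0$ is a sink for $\tau_{-z_0}\omega$) and intersecting over the countable set $z_0\in\Z^d$ one obtains that $P$‑a.s.
\[
\liminf_{N\to\infty}\frac{|\supp_\omega Q\cap B_N(z_0)|}{|B_N(z_0)|}\ \ge\ \frac{\Phi}{2^d}\qquad\text{for every }z_0\in\Z^d.
\]
Set $M:=\esssup_\omega u(\omega,0)$; note that $P$‑a.s.\ $u(\omega,z)\le M$ for every $z$ (apply the definition of $M$ to $\tau_{-z}\omega$ and intersect over $z$). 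Fix $\sigma\in(0,1)$ and $p\in(0,d]$. Applying Theorem~\ref{thm:mean_value} to $u$ on $B_N(z_0)$ along the infinitely many scales $N\ge N_0(\sigma,p,d,z_0)$ at which the density deficit of $\supp_\omega Q$ is below $1-\Phi/2^{d+1}$, and using $u=0$ on $\supp_\omega Q$ together with $u\le M$ elsewhere, gives for every $z_0$
\[
u(\omega,z_0)\ \le\ \max_{B_{\sigma N}(z_0)}u\ \le\ C(\sigma,p,d)\,\|u^+\|_{B_N(z_0),p}\ \le\ C(\sigma,p,d)\,M\,\bigl(1-\Phi/2^{d+1}\bigr)^{1/p}.
\]
Taking the essential supremum over $\omega$ (with $z_0=0$) yields $M\le C(\sigma,p,d)\bigl(1-\Phi/2^{d+1}\bigr)^{1/p}\,M$.

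The final step is to kill $M$. Since $1-\Phi/2^{d+1}<1$, the factor $(1-\Phi/2^{d+1})^{1/p}$ tends to $0$ (superpolynomially in $1/p$) as $p\downarrow 0$, so for a suitable choice of $p$ one has $C(\sigma,p,d)\bigl(1-\Phi/2^{d+1}\bigr)^{1/p}<1$ and hence $M=0$; equivalently, one may iterate the displayed inequality over a decreasing sequence of scales. Thus $u\equiv 0$ $P$‑a.s., i.e.\ $\quenchedP_\omega^z(\exists N:\ X_N\in\supp_\omega Q)=1$ for every $z$, and the proposition follows from the absorption property of $\supp_\omega Q$. I expect the main obstacle to be exactly this last point: one must check that the constant $C(\sigma,p,d)$ of the mean value inequality does not blow up too fast as $p\downarrow 0$ to be beaten by $(1-\Phi/2^{d+1})^{1/p}$ — this is where the balanced structure enters, through Theorem~\ref{thm:mean_value}. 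As a consistency check, the conclusion also matches the martingale picture: $1-u(\omega,X_n)=\quenchedP_{\tau_{-X_n}\omega}^0(\exists N:\ X_N\in\supp Q)$ is a bounded $\quenchedP_\omega^0$‑martingale, hence convergent a.s.; on $\{X_n\notin\supp_\omega Q\ \forall n\}$ its limit is forced to be $0$, i.e.\ $u(\omega,X_n)\to 1$, which becomes impossible once $u\equiv 0$.
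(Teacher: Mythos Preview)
Your reduction to a Liouville statement for the bounded $\omega$-harmonic function $u$ is exactly right, and Theorem~\ref{thm:mean_value} is the correct tool. The gap is precisely the one you flag yourself: you need $C(\sigma,p,d)\bigl(1-\Phi/2^{d+1}\bigr)^{1/p}<1$ for some admissible $p$, and nothing in the statement of Theorem~\ref{thm:mean_value} provides this. Tracing the proof in Section~\ref{sec:mean_value_proof}, one sets $\beta=2dr/p$ and picks up factors of order $\beta\,2^{4\beta}$ before entering the iteration of \cite{GZ}, so $C(\sigma,p,d)$ blows up at least like $\exp(C'/p)$ for an absolute $C'$, while $(1-\Phi/2^{d+1})^{1/p}=\exp(-c/p)$ with $c=-\log(1-\Phi/2^{d+1})\approx\Phi/2^{d+1}$. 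Since $\Phi$ comes from Corollary~\ref{cor:phi} and can be arbitrarily small depending on the law $P$, there is no reason to expect $C'<c$. Your alternative of ``iterating over a decreasing sequence of scales'' does not help either: each application of the mean value inequality produces the same factor $\theta=C(\sigma,p,d)(1-\Phi/2^{d+1})^{1/p}$, so nothing is gained unless already $\theta<1$.

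The paper circumvents this by keeping $p=1$ fixed and manufacturing a set of \emph{arbitrarily small} density on which the relevant function is positive. The missing idea is essentially your closing ``consistency check'' promoted to an actual step of the proof: assuming $u\not\equiv 0$, the bounded $\quenchedP_\omega$-martingale $u(\omega,X_n)$ must converge to $1$ on the event $\{X_n\notin\supp_\omega Q\ \forall n\}$, and hence $\sup_z u(\omega,z)=1$ $P$-a.s. Combined with Lemma~\ref{lem:gtzero} (which gives $u<1$ everywhere, so the supremum is not attained), one applies Theorem~\ref{thm:mean_value} with $p=1$ not to $u$ but to $h_\eta:=u-1+\eta$, for which $\sup h_\eta=\eta>0$ while $\{h_\eta>0\}=\{u>1-\eta\}$. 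The ergodic theorem then turns the mean value inequality into $\eta\le C\cdot\eta\cdot P\bigl(u(\omega,0)>1-\eta\bigr)$; since $P\bigl(u(\omega,0)>1-\eta\bigr)\to 0$ as $\eta\downarrow 0$, this is a contradiction for small $\eta$. In particular the sink density bound of Lemma~\ref{lem:posdens} is not used at all in this step --- the small parameter is $\eta$, not $p$.
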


\begin{proof}
Let
\[
h(z)=h_\omega(z)=
1-\quenchedP_\omega^z\left(
\exists_N \mbox{ s.t. } X_N \in\supp_\omega(Q)
\right).
\]

It suffices to show that $h\equiv 0$.
Assume for contradiction that with positive $P$-probability  there exists $z$ such that $h(z)>0$. Then by the ergodicity of $P$ w.r.t. the shifts, $P(\exists_z h(z)>0)=1$. We now show that $P$-almost surely, $\sup_zh(z)=1$.

Indeed, $h$ is a harmonic function w.r.t. the transition kernel, and therefore $h(X_n)$ is a martingale. Let $z$ be such that $h(z)>0$ and let $A$ be the (positive probability) event that the random walk starting at $z$ never hits $\supp_\omega Q$. By standard Martingale Theory, under the event $A$, the sequence $X_n$ has to converge to 1. \ignore{\red Write this later} 

Thus $\sup h=1$, but by Lemma \ref{lem:gtzero} the supremum is never attained. Now for $\eta>0$, let $h_\eta(z)=\eta+h(z)-1.$ Then, $P$-almost surely,
\begin{equation}\label{eq:conteta}
\sup h_\eta = \eta.
\end{equation}
However, for every large enough ball $B_r$ around the origin, by Theorem \ref{thm:mean_value} with power $p=1$, 
\[
\max_{B_r} h_\eta \leq C\cdot \max_{B_{2r}}h_\eta\cdot \frac{\#\{z\in B_{2r}:h_\eta(z)>0\}}{|B_{2r}|}.
\]
By taking a limit and using the ergodic theorem, we get
\[
\sup_{\Z^d}h_\eta \leq C\cdot\sup_{\Z^d}h_\eta\cdot P(h_\eta(0)>0).
\]

As $\lim_{\eta\to 0}P(h_\eta(0)>0)=0$, we get a contradiction 
for all $\eta$ small enough. Therefore, $h\equiv 0$.

\end{proof}

\begin{proof}[Proof of Theorem \ref{thm:main}]
The theorem follows from Fact \ref{fact:5.1} and Propositions \ref{prop:unique} and \ref{prop:magia}.
\end{proof}

\ifarxiv
\else
\ignore{
\fi

\section{Proof of Theorem \ref{thm:mean_value}}\label{sec:mean_value_proof}
In this section we prove Theorem \ref{thm:mean_value}. The proof is a minor modification of the proof of Theorem 12 of \cite{GZ}. We do not include all details, but rather explain how to modify the proof in \cite{GZ} to our needs. The essential new ingredient is the use of Theorem \ref{thm:max_princ} and Lemma \ref{lem:for7} to control the stopping time $T_1$.
A difference between our notations and the notations in \cite{GZ} is that our generators $L_\omega$ and $L_\omega^{(N)}$ are the negatives of the corresponding generators in \cite{GZ}. We chose to do it this way for notational ease.

As in \cite{GZ}, we may choose $x_0=0$ and write $B_N=B_N(0)$.
Next take $k=k(N)=(\log N)^{100}$ as in the Remark \ref{rem:log100}, and set $T_1^{(N)}=\min(T_1,k)$ and
$$-L^{(N)}_\omega u(x)=\sum_{y}a^{(N)}_\omega(x,y)(u(y)-u(x))$$
where $a_\omega^N(x,y):=\quenchedP_\omega^x\big(T_1^{(N)}=y\big).$ Note that in view or Remark \ref{rem:balancedmart} this is balanced, i.e. 
\[
\sum_{y}(y-x)a_\omega^N(x,y) = 0.
\]

First note that if $L_\omega u(x)=0$ then using the optional stopping theorem,
$$L_\omega^{(N)}u(x)=0.$$
The next step is to adapt Lemma 14 of \cite{GZ}.
Fix $r>1$ and take $\beta = 2dr/ p> 2$ and set
$$\eta(x)=(1-|x|^2/N^2)^\beta 1_{|x|<N}$$  
and $v=\eta u^+$. Then the proof of Lemma 14 in \cite{GZ} shows that for every $x\in B_N$ with $I_v(x)\ne\emptyset$
\begin{equation}\label{eq:mf32}
L^{(N)}_\omega v(x)\le C(\beta)\eta^{1-2/\beta}N^{-2} h^2_x u^+(x)
\end{equation}
where
$$h_x=\max_{y: a^{(N)}_\omega(x,y)>0}|x-y|\leq (\log N)^{100}.$$
Our main problem is that $h_x$ is in our case unbounded in $N$.
We differentiate between points close to the boundary of $B_N$: 
$$B_{2,N}=\{x\in B_N: N^2-|x|^2< 4(\log N)^{100}\}$$
and points in the interior
$$B_{1,N}=\{x\in B_N: N^2-|x|^2\ge 4(\log N)^{100}\}$$
For $x\in B_{1,N}\cap \{x:I_v(x)\ne\emptyset\}$, following \cite{GZ} (27) and (28) we see that
$$\sum_{y}a^{(N)}_\omega(x,y)\frac{\eta(x)}{\eta(y)}(v(x)-v(y))\le \beta^2 2^{3\beta+1}N^{-2}\eta^{1-2/\beta}(x)\sum_{y}a^{(N)}_\omega(x,y)(x-y)^2 u^+(x)$$
and from (29) of \cite{GZ}
$$\sum_{y} a^{(N)}_\omega(x,y)\frac{\eta(x)}{\eta(y)}\big<s,y-x\big>\le \beta2^{2\beta+2}N^{-2}\eta^{1-2/\beta}(x)\sum_{y}a^{(N)}_\omega(x,y)(x-y)^2 u^+(x).$$
That is, in \eqref{eq:mf32} we can to replace $h^2_x$ with
$$
\sum_{y}a^{(N)}_\omega(x,y)(x-y)^2=E^x_\omega[T_1^{(N)}]=:T_\omega(x),
$$ 
where the equality follows from Wald's lemma,
and get
$$L^{(N)}_\omega v(x)\le \beta2^{4\beta+2}R^{-2}\eta^{1-2/\beta}(x)T_\omega(x)u^+(x).$$


Next, for $x\in B_{2,N}$ using the fact that the range of the walk is bounded by $(\log N)^{100}$ by \eqref{eq:mf32} we simply have 
$$L^{(N)}_\omega v(x)\le 2v(x)\le 32\eta^{1-2/\beta}N^{-2}(\log N)^{200}u^+(x).$$
In view of Remark \ref{rem:log100} we can  now apply the maximal inequality for $L^{(N)}_\omega$ so that for $N$ large enough, $P$ a.s.
\begin{eqnarray*}
\max_{B_N}v(x) &\le & 6 N^2\|1_{I_{D^\omega_v}\ne\emptyset}L^{N}_\omega v\|_{B_N,d}\\
&\le&
C\Big\|\eta^{1-2/\beta}u^+1_{B_{1,N}}T\Big\|_{B_N,d}+(\log N)^{200}C\Big\|\eta^{1-2/\beta}u^+1_{B_{2,N}}\Big\|_{B_N,d}.
\end{eqnarray*}

Next we use  H\"older's inequality: let $r'$ be such that $\frac1r+\frac1{r'}=1$. Then
$$\Big\|\eta^{1-2/\beta}u^+1_{B_{1,N}}T\Big\|_{B_N,d}\le \|T\|_{B_N,dr'}\Big\|\eta^{1-2/\beta}u^+\Big\|_{B_N,dr}$$
and
$$(\log N)^{200}\Big\|\eta^{1-2/\beta}u^+1_{B_{2,N}}\Big\|_{B_N,d}\le (\log N)^{200}\|1_{B_{2,N}}\|_{B_N,dr'}\Big\|\eta^{1-2/\beta}u^+\Big\|_{B_N,dr}.$$
Take $N\ge N_1=N_1(r)$ such that
$$(\log N_1)^{200}\|1_{B_{2,N_1}}\|_{B_{N_1,dr'}}\le C (\log N_1)^{200}N_1^{-1/(2dr')}\le 1,$$
and using the ergodic theorem and Lemma \ref{lem:for7} we now that $P$ a.s. for $N\ge N_2=N_2(dr')$,
$$\|T_\omega\|_{B_{N,dr'}}\le C=C(dr')<\infty.$$
Thus for each $r>1$, if   $N\ge N_0(r)=\max(N_1,N_2)$  there exist $C=C(r)$ such that 
$$\max_{B_N}v(x)\le C \Big\|\eta^{1-2/\beta}u^+\Big\|_{B_{N,dr}},$$
and can then proceed as in the proof of Theorem 12 of \cite{GZ}.

\hfill\qed

\ifarxiv
\else
}
\fi

\section{Concluding remarks}\label{sec:ques}

We end this paper with a number of remarks and open questions.


\begin{iremark}
Our result is also true for time continuous balanced RWRE
generated by $L_\omega$. One way of seeing it is that by the Ergodic theorem the time scales of both processes are comparable.
\end{iremark}

\begin{iremark}
Although not done here, we believe that our result extends easily to
i.i.d. genuinely d-dimensional (appropriately defined) finite range balanced environments, that is for which
$$\sum_{z\in\Z^d} \omega(x,z)(z)=0$$
with
$$\omega(x,z)=0,\qquad\text{if}\quad |z|\ge R$$
for some $R\ge 1$,
since the essential analytical tools work for such generators.
Note that this is less restrictive than strongly balanced
$$\omega(x,z)=\omega(x,-z), \quad \forall z.$$
Of course both definitions agree in the nearest neighbor case.
\end{iremark}

\begin{iremark}
A much more challenging problem is to add a deterministic drift.
For example take for $\epsilon\in (0,1)$
$$\omega(x,e)=(1-\epsilon)\omega_0(x,e)+\epsilon 1_{e=e_i}$$
where $\omega_0$ is i.i.d. balanced, genuinely d-dimensional.
\end{iremark}

\begin{iremark}
Replacing the i.i.d. hypothesis with a strongly mixing condition
on the environment is also a natural question.
Example \ref{exam:noclt} shows that general ergodic (and even mixing) media do not satisfy the quenched
invariance principle, but things could be manageable if the environment has strong enough 
mixing conditions.
\ignore{
In the 2-d case, we can achieve good control both on the percolation problem and on the moments of the rescaling time, and thus the methods of this paper will work with very little change. In higher dimensions, both estimates rely strongly on the i.i.d. structure, and thus new ideas are necessary.

This is similar to the situation in the random conductance model,
where the 2-d cases are generally more robust (see \cite{BB}), while in higher dimensions situations which are not uniformly elliptic and not i.i.d. are much harder to handle.
}

\end{iremark}

\begin{iremark}
 The percolation problem in higher dimensions on its own is a source
of open questions. One interesting questions is:  are all infinite strongly connected components sinks
or are there also other components? This is essentially the question of uniqueness of 
the infinite strongly connected component.
\end{iremark}

And finally,
\begin{iremark}
Can we get heat kernel bounds of the Aronson type at large scale
or Harnack inequalities? See e.g. \cite{barlow} where this is done in a non-elliptic reversible setting.
\end{iremark}

\section*{Acknowledgment}
Xiaoqin Guo and Ofer Zeitouni are acknowledged for numerous helpful  discussions. Max von Renesse is acknowledged for many useful discussions.

\bibliography{balanced}

\begin{thebibliography}{10}

\bibitem{barlow}
M.~T. Barlow.
\newblock Random walks on supercritical percolation clusters.
\newblock {\em Ann. Probab.}, 32(4):3024--3084, 2004.

\bibitem{BD}
M.~T. Barlow and J.-D. Deuschel.
\newblock Invariance principle for the random conductance model with unbounded
  conductances.
\newblock {\em Ann. Probab.}, 38(1):234--276, 2010.

\bibitem{BB}
N.~Berger and M.~Biskup.
\newblock Quenched invariance principle for simple random walk on percolation
  clusters.
\newblock {\em Probab. Theory Related Fields}, 137(1-2):83--120, 2007.

\bibitem{BeZe}
N.~Berger and O.~Zeitouni.
\newblock A quenched invariance principle for certain ballistic random walks in
  i.i.d. environments.
\newblock In {\em In and out of equilibrium. 2}, volume~60 of {\em Progr.
  Probab.}, pages 137--160. Birkh\"auser, Basel, 2008.

\bibitem{BS}
E.~Bolthausen and A.-S. Sznitman.
\newblock On the static and dynamic points of view for certain random walks in
  random environment.
\newblock {\em Methods Appl. Anal.}, 9(3):345--375, 2002.
\newblock Special issue dedicated to Daniel W. Stroock and Srinivasa S. R.
  Varadhan on the occasion of their 60th birthday.

\bibitem{BZ}
E.~Bolthausen and O.~Zeitouni.
\newblock Multiscale analysis of exit distributions for random walks in random
  environments.
\newblock {\em Probab. Theory Related Fields}, 138(3-4):581--645, 2007.

\bibitem{BK}
J.~Bricmont and A.~Kupiainen.
\newblock Random walks in asymmetric random environments.
\newblock {\em Comm. Math. Phys.}, 142(2):345--420, 1991.

\bibitem{BuKe}
R.~M. Burton and M.~Keane.
\newblock Density and uniqueness in percolation.
\newblock {\em Comm. Math. Phys.}, 121(3):501--505, 1989.

\bibitem{MFGW}
A.~De~Masi, P.~A. Ferrari, S.~Goldstein, and W.~D. Wick.
\newblock An invariance principle for reversible {M}arkov processes.
  {A}pplications to random motions in random environments.
\newblock {\em J. Statist. Phys.}, 55(3-4):787--855, 1989.

\bibitem{donsker}
M.~D. Donsker.
\newblock An invariance principle for certain probability limit theorems.
\newblock {\em Mem. Amer. Math. Soc.,}, 1951(6):12, 1951.

\bibitem{durrett}
R.~Durrett.
\newblock {\em Probability: theory and examples}.
\newblock Cambridge Series in Statistical and Probabilistic Mathematics.
  Cambridge University Press, Cambridge, fourth edition, 2010.

\bibitem{GZ}
X.~Guo and O.~Zeitouni.
\newblock Quenched invariance principle for random walks in balanced random
  environment.
\newblock {\em To appear, PTRF}, 2010.

\bibitem{KV}
C.~Kipnis and S.~R.~S. Varadhan.
\newblock Central limit theorem for additive functionals of reversible {M}arkov
  processes and applications to simple exclusions.
\newblock {\em Comm. Math. Phys.}, 104(1):1--19, 1986.

\bibitem{KT}
H.~J. Kuo and N.~S. Trudinger.
\newblock Linear elliptic difference inequalities with random coefficients.
\newblock {\em Math. Comp.}, 55(191):37--53, 1990.

\bibitem{Lawler82}
G.~F. Lawler.
\newblock Weak convergence of a random walk in a random environment.
\newblock {\em Comm. Math. Phys.}, 87(1):81--87, 1982/83.

\bibitem{MP}
P.~Mathieu and A.~Piatnitski.
\newblock Quenched invariance principles for random walks on percolation
  clusters.
\newblock {\em Proc. R. Soc. Lond. Ser. A Math. Phys. Eng. Sci.},
  463(2085):2287--2307, 2007.

\bibitem{RAS}
F.~Rassoul-Agha and T.~Sepp{\"a}l{\"a}inen.
\newblock Almost sure functional central limit theorem for ballistic random
  walk in random environment.
\newblock {\em Ann. Inst. Henri Poincar\'e Probab. Stat.}, 45(2):373--420,
  2009.

\bibitem{sabot}
C.~Sabot and L.~Tournier.
\newblock Reversed {D}irichlet environment and directional transience of random
  walks in {D}irichlet environment.
\newblock {\em Ann. Inst. Henri Poincar\'e Probab. Stat.}, 47(1):1--8, 2011.

\bibitem{SS}
V.~Sidoravicius and A.-S. Sznitman.
\newblock Quenched invariance principles for walks on clusters of percolation
  or among random conductances.
\newblock {\em Probab. Theory Related Fields}, 129(2):219--244, 2004.

\bibitem{S2000}
A.-S. Sznitman.
\newblock Slowdown estimates and central limit theorem for random walks in
  random environment.
\newblock {\em J. Eur. Math. Soc. (JEMS)}, 2(2):93--143, 2000.

\bibitem{SZ}
A.-S. Sznitman and O.~Zeitouni.
\newblock An invariance principle for isotropic diffusions in random
  environment.
\newblock {\em Invent. Math.}, 164(3):455--567, 2006.

\bibitem{ofernotes}
O.~Zeitouni.
\newblock Random walks in random environment.
\newblock In {\em Lectures on probability theory and statistics}, volume 1837
  of {\em Lecture Notes in Math.}, pages 189--312. Springer, Berlin, 2004.

\end{thebibliography}
\bibliographystyle{plain}

\end{document}